\newtheorem{proposition}{Proposition}[section]
\newtheorem{lemma}[proposition]{Lemma}
\newtheorem{definition}[proposition]{Definition}
\newtheorem{theorem}[proposition]{Theorem}
\newtheorem{conjecture}[proposition]{Conjecture}
\newtheorem{corollary}[proposition]{Corollary}
\newtheorem{example}[proposition]{Example}
\newtheorem{remark}[proposition]{Remark}
\newtheorem{lemma-definition}[proposition]{Lemma-Definition}
\newtheorem{question}[proposition]{Question}
\newtheorem{problem}[proposition]{Problem}
\newcounter{tmp}
\def\Qcoh{\operatorname{Qcoh}}
\def\lto{\longrightarrow}
\def\A{{\mathcal A}}
\def\B{{\mathcal B}}
\def\D{{\mathcal D}}
\def\H{{\mathcal H}}
\def\N{{\mathcal N}}
\def\cO{{\mathcal O}}
\def\R{{\mathcal R}}
\def\L{{\mathcal L}}
\def\T{{\mathcal T}}
\def\I{{\mathcal I}}
\def\ZZ{{\mathbb Z}}
\def\bR{{\mathbf R}}
\def\bp{{\mathbf p}}
\def\bL{{\mathbf L}}
\def\AA{{\mathbb A}}
\def\NN{{\mathbb N}}
\def\ZZ{{\mathbb Z}}
\def\PP{{\mathbb P}}
\def\Hom{\operatorname{Hom}}
\def\End{\operatorname{End}}
\def\Ext{\operatorname{Ext}}
\def\Int{\operatorname{Int}}
\def\Cone{\operatorname{Cone}}
\def\Spec{\operatorname{Spec}}
\def\kk{{\Bbbk}}
\def\ff{{\mathbb F}}
\def\op{\circ}
\newcommand{\Ho}{{\H^0}}
\newcommand{\Ob}{\operatorname{Ob}}
\newcommand{\SF}{\dS\!\dF\!\operatorname{--}\!}
\newcommand{\SFf}{\dS\!\dF_{fg}\!\operatorname{--}\!}
\newcommand{\prfdg}{\mathscr{P}\!\mathit{erf}\!\operatorname{--}}
\newcommand{\Ac}{\dA\!\mathit{c}\!\operatorname{--}\!}
\newcommand{\prf}{\operatorname{perf}\!\operatorname{--}}
\def\dA{\mathscr A}
\def\dB{\mathscr B}
\def\dC{\mathscr C}
\def\dE{\mathscr E}
\def\dF{\mathscr F}
\def\dH{\mathscr H}
\def\dI{\mathscr I}
\def\dK{\mathscr K}
\def\dL{\mathscr L}
\def\dN{\mathscr N}
\def\dP{\mathscr P}
\def\dR{\mathscr R}
\def\dS{\mathscr S}
\def\dX{\mathscr X}
\def\dY{\mathscr Y}
\def\dZ{\mathscr Z}
\def\Mod{{\mathscr M}\!\mathit{od}\!\operatorname{--}\!}
\def\mE{\mathsf E}
\def\mF{\mathsf F}
\def\mI{\mathsf I}
\def\mJ{\mathsf J}
\def\mK{\mathsf K}
\def\mL{\mathsf L}
\def\mM{\mathsf M}
\def\mN{\mathsf N}
\def\mP{\mathsf P}
\def\mR{\mathsf R}
\def\mT{\mathsf T}
\def\mh{\mathsf h}
\def\mf{\mathsf f}
\def\mPhi{\mathsf \Phi}
\def\m0{\mathsf 0}
\def\mX{X}
\def\mY{Y}
\def\mZ{Z}
\def\dHom{\mathsf{Hom}}
\def\dEnd{\mathsf{End}}
\def\ptr{\operatorname{pre-tr}}
\def\Rep{{\R\!\mathit{ep}}}
\def\rd{{J}}
\def\rdi{\mJ_{-}}
\def\rde{\mJ_{+}}
\def\hy{\mbox{-}}
\def\gE{E}
\def\gR{R}
\def\gS{S}
\def\gM{M}
\def\gN{N}
\def\gHom{\Hom}
\def\un{\underline}
\def\La{\Lambda}
\def\Aut{\operatorname{\mathbf{Aut}}}
\def\dr{d_{\dR}}
\def\ds{d_{\dS}}
\def\cf{\mathrm{cf}}
\title[]{Finite-dimensional differential graded algebras and their geometric realizations}
\author[]{Dmitri Orlov}
\address{ Algebraic Geometry Dept., Steklov Math. Institute RAS,
8 Gubkin str., Moscow 119991, RUSSIA}
\email{orlov@mi-ras.ru}
\thanks{This work is supported by the Russian Science Foundation under  grant 19-11-00164}
\date{}
\keywords{Noncommutative algebraic geometry, derived noncommutative schemes, differential graded algebras, triangulated categories,  perfect modules and complexes}
\subjclass[2010]{14A22, 16E45,  16E35, 18E30}
\begin{document}

\begin{abstract}
We prove that
for any finite-dimensional differential graded algebra with separable semisimple part
the category of perfect modules is equivalent to a full subcategory of the category of perfect complexes on
a smooth projective scheme with a full separable semi-exceptional collection.
Moreover, we also show that it gives a characterization of such categories assuming that a subcategory is idempotent complete and  has a classical generator.
\end{abstract}

\maketitle

\section*{Introduction}

The main goal of this article is to study and describe the world of derived noncommutative schemes, which are related to finite-dimensional differential graded (DG) $\kk$\!--algebras.
By a derived noncommutative scheme $ \dX $ over $\kk$ we mean a $\kk$\!--linear differential graded (DG) category of perfect modules $ \prfdg \dR$ over a cohomologically bounded DG $\kk$\!--algebra $\dR,$
i.e. such a DG algebra that has only a finite number of nontrivial cohomology spaces
(see , e.g., \cite{O_glue, O_alg}).
The reason of such a definition is a fact that the DG category
of perfect complexes  on any usual commutative quasi-compact and quasi-separated scheme $X$ is quasi-equivalent to a DG category of the form
$ \prfdg \dR, $  where $ \dR $ is a cohomologically bounded DG $\kk$\!--algebra. It follows from results of the papers \cite{Ne, BV, Ke, Ke2} and, in this case, we also obtain
that  the derived category $ \D_{\Qcoh} (X) $ of unbounded complexes of sheaves of $\cO_X$\!--modules with quasi-coherent cohomology  is equivalent to the derived
category $\D(\dR)$ of all DG modules
 over the DG algebra  $\dR.$
The differential graded algebra $\dR$ depends on a choice of a classical generator $\mE \in \prf X$ and it appears as the  DG algebra of endomorphisms
$\dEnd_{\prfdg X} (\mE)$ of this classical generator $\mE$ in the DG category $\prfdg X.$
The derived category $ \D(\dR) $ will be called by the derived category of quasi-coherent sheaves on the derived noncommutative scheme $\dX,$
while the triangulated category $ \prf \dR $ will be called the category of perfect complexes on it.

Many important properties of usual commutative schemes can be extended to derived noncommutative schemes.
For example, one can define when derived noncommutative schemes are proper, smooth or regular.
In particular, a derived noncommutative scheme $\dX=\prfdg\dR$ over $\kk$ will be {\sf proper} if and only if
the cohomology $\kk$\!--algebra $\bigoplus_{p\in\ZZ}H^p(\dR)$ is finite-dimensional.
One of the most natural classes of proper derived noncommutative schemes is the class of such schemes  $\dX=\prfdg\dR,$ for which DG $\kk$\!--algebras $\dR$ themselves are finite-dimensional.
In this paper we will find  some characterization for this class of  derived noncommutative schemes.

One of the most important properties of both commutative and noncommutative schemes is the properties of smoothness and regularity.
A derived noncommutative scheme $\dX=\prfdg\dR$ will be called {\sf regular} if the triangulated category $\prf\dR$ has a strong generator, i.e.  any classical generator generates the whole category
$\prf\dR$ in a finite number of steps
(Definitions \ref{regprop} and \ref{propdef}).
The notion of smoothness is well-defined for any DG $\kk$\!--algebra (and any DG $\kk$\!--linear category). A DG $\kk$\!--algebra $\dR$ is called {\sf $\kk$\!--smooth} if it is perfect as the bimodule over
itself, i.e. as the module over the DG algebra $\dR^{\op}\otimes_{\kk}\dR.$ It also can be shown that the property for DG algebra $\dR$ to be smooth is the property
of the derived noncommutative scheme $\dX=\prfdg\dR$ and does not depend on the choosing of a DG algebra $\dR.$

Similar to the commutative case in which the simplest schemes are $\Spec\ff,$ where $\ff\supseteq\kk$ is a finite  field extension, in a noncommutative situation, the simplest schemes are
$\prfdg D,$ where $D$ is a finite-dimensional division $\kk$\!--algebra. Note that such schemes are always regular and they are $\kk$\!--smooth if and only if they are separable over $\kk.$
On the other hand, in the world of derived noncommutative schemes there is a remarkable gluing operation
 that enables to build new derived noncommutative schemes and which has no any analogue in the commutative case.
More precisely, having two DG categories $ \dA $ and $ \dB $
 and a $ \dB^{\op}\hy\dA $\!--bimodule $ \mT$ one can construct a new DG category $\dC = \dA \underset{\mT} {\oright} \dB$
   that will be called a gluing of the DG categories $ \dA $ and $ \dB $ with respect to the bimodule $ \mT. $
In such way, assuming that the bimodule $ \mT$ is cohomologically bounded, we can define a gluing of any derived noncommutative schemes $ \dX $ and $ \dY.$
Derived noncommutative schemes of the form $ \dX \underset {\mT} {\oright} \dY $ inherit many different properties of the
 noncommutative schemes $\dX$ and $\dY.$
For example, in the case when a bimodule $ \mT $ is perfect, the smoothness and properness of $ \dX $ and $ \dY $ imply the smoothness and properness for the noncommutative scheme
$ \dX \underset {\mT} {\oright} \dY.$
A gluing procedure can be iterated and it allows to reproduce new and new noncommutative schemes.

Starting form the set of the simplest noncommutative schemes of the form $\prfdg D,$ where $D$ are separable finite-dimensional division $\kk$\!--algebras, and using
the gluing operation via perfect bimodules, we can obtain a large class of proper and smooth derived noncommutative schemes. For any derived noncommutative scheme $\dX=\prfdg\dR$ from this class
its triangulated category of perfect modules $\prf\dR$ has a full semi-exceptional collection (see Definition \ref{semi-exc}).
Moreover, it is also well-known and is almost evident that if the triangulated category $\prf\dR$ has a full semi-exceptional collection then the derived noncommutative scheme
$\dX=\prfdg\dR$ can be obtained by a gluing of  schemes of the form $\prfdg D,$ where $D$ are finite-dimensional division algebras.
If, in addition, the derived noncommutative scheme  $\dX$ is smooth and proper, then all algebras $D$ have to be separable and the gluing procedure is carried out  using only perfect bimodules.

In such way we obtain a certain class of smooth and proper derived noncommutative schemes which will be called a class of schemes with a full separable semi-exceptional collection.
Any such derived noncommutative scheme with a full separable semi-exceptional collection itself has a form $\prfdg\dE,$ where $\dE$ is a smooth finite-dimensional DG $\kk$\!--algebra (see Corollary \ref{sep_coll}).
On the other hand, there are a lot of finite-dimensional DG $\kk$\!--algebras $\dR$ for which the derived noncommutative schemes $\prfdg\dR$ does not have  full semi-exceptional collections even if they are smooth.
However,  it can be shown that finite-dimensional DG $\kk$\!--algebras are directly related to the class of derived noncommutative schemes with  full separable semi-exceptional collections.

One of the main two purpose  of this paper is to suggest a generalization and an extension of the famous Auslander construction for finite-dimensional algebras  to the case of finite-dimensional DG algebras.
More precisely, for any finite-dimensional DG algebra $\dR$ we construct another finite-dimensional DG algebra $\dE$ the category $\prf\dE$ of which contains $\prf\dR$ as a full subcategory on the one hand and it has a full semi-exceptional collection on the other (see Theorem \ref{DGinclus}).  The constructed derived noncommutative scheme $\prfdg\dE$ is regular and it provides a regular resolution of singularities for
the derived noncommutative scheme $\prfdg\dR.$ In addition, if the DG $\kk$\!--algebra $\dR$ has a separable semisimple part, then the DG $\kk$\!--algebra $\dE$ is also smooth and it gives a smooth resolution of singularities for $\prfdg\dR.$
Furthermore, this construction gives us a complete characterization of all derived noncommutative schemes $\dX=\prfdg\dR$
with a finite-dimensional DG algebra $\dR.$ One can show that any full DG subcategory $\dA\subset\prfdg\dE$ of a DG category $\prfdg\dE$ with a full separable semi-exceptional collection has the such form if and only if $\H^0(\dA)$  is idempotent complete and has a classical generator (see Corollary \ref{DGinclus}). In addition, $\dA\subset\prfdg\dE$ will be admissible if and only if the DG algebra $\dR$ is smooth.

The last section is devoted to the study of geometric realizations of finite-dimensional DG algebras.
We prove that it exists if the semisimple part is separable. More precisely, we show that for any
finite-dimensional DG algebra $\dR$ with separable semisimple part there are a smooth projective scheme
$V$ and a perfect complex $\mE\in \prfdg V$ such that the DG algebra $\dEnd_{\prfdg V}(\mE)$  is quasi-isomorphic to $\dR$
(see Theorem \ref{DGinclus_smooth}).
This fact implies that the DG category $\prfdg\dR$ is quasi-equivalent to a full DG subcategory of the DG category $\prfdg V.$
Moreover, by construction, we show that the variety $V$ has a full separable semi-exceptional collection.
These results allow us to embed any derived noncommutative scheme $\dX=\prfdg\dR$ to the geometric DG category
$\prfdg V$ for a smooth projective scheme with a full separable semi-exceptional collection. This gives us a geometric realization for any such derive noncommutative scheme and it also provides another characterization for the class of such schemes as full DG subcategories $\dA\subset\prfdg V$ for which $\H^0(\dA)$  is idempotent complete and has a classical generator, while $V$ has a full separable semi-exceptional collection (see Corollary \ref{DGinclus_smooth}).
In the end of the paper  we formulate two conjectures, pose a couple of questions, and consider some simple examples.

The author is very grateful to Alexander Efimov, Anton Fonarev, Sergei Gorchinskiy, Mikhail Khovanov, and Alexander Kuznetsov for very useful discussions and valuable comments.

\section{Preliminaries on differential graded algebras and categories}

\subsection{Differential graded algebras and categories}

Let $\kk$  be a field. Recall that  a {\sf differential graded $\kk$\!--algebra (=DG
algebra)} $\dR=(\gR, \dr)$ is  a $\ZZ$\!--graded associative $\kk$\!-algebra
\[
\gR =\bigoplus_{q\in \ZZ} R^q
\]
endowed with a $\kk$\!-linear differential $\dr: \gR \to \gR$  (i.e. homogeneous
maps $\dr$ of degree 1 with $\dr^2 = 0$) that satisfies the graded Leibniz rule
\[
\dr(xy) = \dr (x) y + (-1)^q x \dr (y) ,\quad \text{for all}\quad  x\in R^q, y\in \gR.
\]
We consider DG algebras with identity $1\in R^0$ and $\dr(1)=0.$

Any ordinary $\kk$\!--algebra $\La$ gives rise to a DG algebra $\dR$ defined by
$R^0=\La$ and $R^q = 0$ when $q \ne 0.$
Conversely, any DG algebra $\dR$ which is concentrated in degree $0$ is obtained
in this way from an ordinary algebra.

A {\sf differential graded module $\mM$ over  $\dR$ (=DG $\dR$\!--module)} is a $\ZZ$\!-graded right
$\gR$\!-module
\[
\gM = \bigoplus_{q\in\ZZ} M^q
\]
endowed with a $\kk$\!-linear differential $d_{\mM}: \gM \to \gM$ of degree 1 for which $d_{\mM}^2=0$ and
the graded Leibniz rule holds, i.e.
\[
d_{\mM}(mr) = d_{\mM}(m) r + (-1)^q m \dr( r) , \quad\text{for all}\quad m\in M^q, r\in \gR.
\]

A morphism of DG $\dR$\!-modules $f: \mM \to \mN$ is a morphism of the underlying $\gR$\!--modules
$\gM$ and $\gN$
which is homogeneous of degree $0$ and commutes with the differential.
When a DG algebra $\dR$ is an ordinary algebra $\La,$ then the category of DG $\dR$\!--modules identifies with
the category of differential complexes of right $\La$\!--modules.

If $\dR$ is a DG algebra and
$\mM$ and $\mN$ are two DG modules then we can define a complex of $\kk$\!--vector spaces $\dHom_{\dR} (\mM, \mN).$
As a graded vector space it is equal to the graded space of homomorphisms
\begin{equation}\label{grHom}
\gHom_{\gR}^{gr} (\gM, \gN):=\bigoplus_{q\in\ZZ}\Hom_{\gR} (\gM, \gN)^q,
\end{equation}
where $\Hom_{\gR} (\gM, \gN)^q$ is the space of homogeneous homomorphism of $\gR$\!--modules of degree $q.$
The differential $D$ on $\dHom_{\dR} (\mM, \mN)$ is defined by the following rule
\begin{equation}\label{DHom}
D(f) = d_{\mN} \circ f - (-1)^q
f\circ d_{\mM}\quad\text{ for each}\quad f\in \Hom_{\gR} (\gM, \gN)^q.
\end{equation}
It is easy to see that $\dHom_{\dR} (\dR, \mN)\cong\mN$ for any DG modules $\mN.$

A DG algebra is a particular case of differential graded (DG) category. In fact it is a DG category with a single object.
A {\sf differential graded (DG) category} is a $\kk$\!--linear category $\dA$ whose morphism spaces $\dHom (\mX, \mY)$
are complexes of $\kk$\!-vector spaces (DG $\kk$\!--modules), so that for any $\mX, \mY, \mZ\in
\Ob\dC$ the composition $\dHom (\mY, \mZ)\otimes \dHom (\mX, \mY)\to
\dHom (\mX, \mZ)$ is a morphism of DG $\kk$\!--modules. The identity morphism $1_\mX\in \dHom (\mX, \mX)$ is closed of
degree zero.

To any DG category $\dA$ we associate its {\sf homotopy category} $\Ho(\dA),$ which, by definition,
has the same objects as the DG category $\dA$ while
morphisms in $\Ho(\dA)$ are obtained by taking the $0$\!--th cohomology
$H^0(\dHom_{\dA} (\mX, \mY))$ of the complexes of morphisms $\dHom_{\dA} (\mX, \mY).$

In a natural way, a {\sf DG functor}
$\mF:\dA\to\dB$ is given by a map $\mF:\Ob(\dA)\to\Ob(\dB)$ and
by maps of DG $\kk$\!--modules
\[
\mF_{\mX, \mY}: \dHom_{\dA}(\mX, \mY) \lto \dHom_{\dB}(\mF \mX,\mF \mY),\quad \mX, \mY\in\Ob(\dA),
\]
which preserve the compositions and the identity morphisms.

A DG functor $\mF: \dA\to\dB$ will be called a {\sf quasi-equivalence} if all maps
$\mF_{\mX, \mY}$ are  quasi-isomorphisms for any pairs of objects $\mX, \mY\in \dA$ while
the induced homotopy functor $H^0(\mF): \Ho(\dA)\to \Ho(\dB)$ is an
equivalence. Two DG categories $\dA$ and $\dB$ will be called {\sf quasi-equivalent}, if there is a collection of DG
categories $\dC_1,\dots, \dC_m$ and a sequence of quasi-equivalences
$\dA\stackrel{\sim}{\leftarrow} \dC_1 \stackrel{\sim}{\rightarrow} \cdots \stackrel{\sim}{\leftarrow} \dC_m
\stackrel{\sim}{\rightarrow} \dB.$

\subsection{Differential graded modules over differential graded categories}

Given a small DG category $\dA$ we define a {\sf (right) DG $\dA$\!--module} as a DG functor
$\mM: \dA^{op}\to \Mod \kk,$ where $\Mod \kk$ is the DG category of DG $\kk$\!--modules. All (right) DG $\dA$\!--modules form a DG category which we denote by  $\Mod \dA.$
Let $\Ac\dA$ be the full
DG subcategory of $\Mod\dA$ consisting of acyclic DG $\dA$\!--modules, i.e. such DG modules $\mM$
that the complex $\mM(\mX)$ is acyclic for all $X\in\dA.$
The
homotopy category of DG modules $\Ho(\Mod\dA)$ has a natural structure of a triangulated category
and the homotopy subcategory of acyclic complexes $\Ho (\Ac\dA)$ forms a full triangulated subcategory in it.
The {\sf derived
category} $\D(\dA)$ is defined as the Verdier quotient
\[
\D(\dA):=\Ho(\Mod\dA)/\Ho (\Ac\dA).
\]

Each object $\mY\in\dA$ produces a right DG $\dA$\!--module
$
\mh^\mY(-):=\dHom_{\dA}(-, \mY)
$
which is called a {\sf representable} DG module. The
natural DG functor
$\mh^\bullet :\dA \to
\Mod\dA$ is full and faithful and called the Yoneda DG functor.
A DG $\dA$\!--module is called {\sf free} if it is isomorphic to a direct sum of  DG modules of the form
$\mh^\mY[n],$ where $\mY\in\dA,\; n\in\ZZ.$
A DG $\dA$\!--module
$\mP$ is called {\sf semi-free} if it has a filtration
$0=\mPhi_0\subset \mPhi_1\subset ...=\mP$
such that each quotient  $\mPhi_{i+1}/\mPhi_i$ is free. We denote by $\SF\dA$ the full
DG subcategory of $\Mod\dA$ which consists of all semi-free DG $\dA$\!--modules 
We also consider the full DG subcategory $\SFf\dA\subset \SF\dA$  of all finitely generated semi-free
DG modules, i.e. such that $\mPhi_m=\mP$ for some $m$ and $\mPhi_{i+1}/\mPhi_i$ is a finite direct sum of DG modules of the form
$\mh^Y[n]$ for any $i.$
For any DG
$\dA$\!--module $\mM$ we can found  a semi-free
DG $\dA$\!--module $\bp M$ with a quasi-isomorphism $\bp \mM\to \mM.$

It is natural to consider a DG category of so called h-projective (homotopically projective) DG $\dA$\!--modules.
A  DG $\dA$\!--module $\mP$ is called {\sf h-projective } if the complex
$
\dHom_{\Mod\dA}(\mP, \mN)=0
$
is acyclic for any acyclic DG $\dA$\!--module $\mN$ (dually, one defines {\sf h-injective} DG modules).
Denote by  $\dP(\dA)\subset \Mod\dA$ the full
DG subcategory of h-projective DG $\dA$\!--modules. Any semi-free
DG $\dA$\!--module is  h-projective and the natural inclusion of DG subcategories $\SF\dA\hookrightarrow\dP(\dA)$
is a quasi-equivalence.
The canonical DG functors $\SF\dA\hookrightarrow\dP(\dA)\hookrightarrow\Mod\dA$ induce equivalences
$\Ho(\SF\dA)\stackrel{\sim}{\to} \Ho(\dP(\dA))\stackrel{\sim}{\to} \D(\dA)$ between the triangulated categories.

\begin{definition} The DG category of {\sf perfect DG modules} $\prfdg\dA$
is the full DG subcategory of $\dP(\dA)$ consisting of all DG $\dA$\!--modules which are isomorphic  to  direct summands of objects from $\SFf\dA$
in the homotopy category $\Ho(\dP(\dA))\cong\D(\dA).$
\end{definition}

The homotopy category $\Ho(\prfdg\dA)$ is usually denoted by $\prf\dA.$  It is equivalent to the triangulated subcategory
 of compact objects $\D(\dA)^c\subset \D(\dA)$ (see, e.g., \cite{Ne, Ke2}).

Let $\dA$ and $\dB$ be two DG categories and let $\mT$ be a DG
$\dA\hy\dB$\!--bimodule.
For any DG $\dA$\!--module $\mM$ we can produce a DG $\dB$\!--module
$\mM\otimes_{\dA} \mT.$ Thus we obtain
a DG functor $(-)\otimes_{\dA} \mT: \Mod\dA \to \Mod\dB.$ It has a right adjoint DG functor
$\dHom_{\dB} (\mT, -).$
This pair of DG functors do not respect
quasi-isomorphisms, in general. Applying them to h-projective and h-injective DG modules
we can define  derived functors
\begin{align}\label{derived_quasi}
\bL F^*\cong (-)\stackrel{\bL}{\otimes}_\dA \mT: \D(\dA) \lto \D(\dB)
\quad
\text{and}
\quad
\bR F_*\cong \bR\Hom_{\dB}(\mT, -): \D(\dB) \lto \D(\dA),
\end{align}
between the derived categories which are adjoint to each other.
Consider the category of all small DG $\kk$\!--linear categories. It is now known that it has a structure of cofibrantly
generated model category, in which weak equivalences are the
quasi-equivalences (see \cite{Ta}).
Thus, the localization of this category
 with respect to all quasi-equivalences has small
Hom-sets. This also gives that a morphism from $\dA$ to $\dB$
in the localization can be represented as $\dA\leftarrow \dA_{cof}\to\dB,$ where $\dA_{cof}\to \dA$ is a cofibrant replacement
 (see \cite{Ke2}).
 Now we give a definition of quasi-functors which one-to-one correspond to morphisms in the localization.

\begin{definition}
A DG $\dA\hy\dB$\!--bimodule $\mT$ is called a {\sf quasi-functor}
from the DG category $\dA$ to DG category $\dB$ if the tensor functor
$
(-)\stackrel{\bL}{\otimes}_\dA \mT: \D(\dA) \to \D(\dB)
$
sends any representable DG $\dA$\!--module to an object which is
isomorphic  to a representable $\dB$\!--module in $\D(\dB).$
\end{definition}

The category of quasi-representable DG
$\dB$\!--modules is equivalent to the homotopy category $\Ho(\dB).$ Hence, any quasi-functor
$\mT$ induces a functor $\Ho(\mT):\Ho(\dA)\to \Ho(\dB).$
This also imply that the restriction of the functor $(-)\stackrel{\bL}{\otimes}_\dA \mT$ on the
triangulated category $\prf\dA$ defines a functor $\prf\dA\to \prf\dB$ between categories of perfect DG modules.
If $\dA$ is a DG algebra, then the quasi-functor $\mT$ is perfect as $\dB$\!--module, because  as the DG $\dB$\!--module it is quasi-isomorphic to a representable
DG $\dB$\!--module.

Let us consider   the full subcategory $\Rep(\dA,\; \dB)\subset\D(\dA^{op}\otimes\dB)$  consisting of
all quasi-functors. It is now known  that the morphisms between DG categories  $\dA$ and $\dB$
in the localization of the category of all small DG $\kk$\!--linear
categories with respect to the
quasi-equivalences are in a natural bijection with the classes of isomorphisms
of $\Rep(\dA,\dB)$ (see \cite{To}).
By this reason any morphism $\dA\stackrel{\sim}{\leftarrow} \dA_{cof}\to\dB$ in the localization
will be called a quasi-functor.

\subsection{Triangulated categories and semi-exceptional collections}
Let $\T$ be a triangulated category. Consider a set of objects $S\subset\T.$ We say that the set $S$ {\sf classically generates}  the triangulated category $\T,$
if the smallest full triangulated subcategory of
$\T,$ which contains $S$ and is closed under direct summands, coincides with the whole category $\T.$
An object $E\in\T$ will be called a {\sf classical generator} of $\T,$ if the set $\{E\}$ classically generates $\T.$

A classical generator is called {\sf strong} if it generates the triangulated category $\T$ in a finite number of steps.
More precisely,
let $\I_1, \I_2 \subset
\mathcal{T}$ be two full subcategories.
Denote by $\I_1*\I_2$ the full subcategory of $\T$
consisting of all objects $X$ such that there is an exact triangle $X_1\to X\to X_2$ with $X_i\in \I_i.$
Let $\langle \I \rangle$ be the smallest full subcategory
containing $\I$ and
closed under shifts, finite direct sums, and direct summands.
Put
$\I_1 \diamond \I_2 := \langle \I_1 *
\I_2 \rangle,$
and define by induction
$\langle \I\rangle_k=\langle\I\rangle_{k-1}\diamond\langle \I\rangle.$ We denote $\langle \I\rangle$ by $\langle E\rangle_1,$ when $\I$ consists of a single object $E.$ 
By induction, we put $\langle E\rangle _k=\langle E\rangle_{k-1}\diamond\langle E\rangle_1.$
Finally, an object $E$ is called a {\sf strong generator} if we have $\langle E \rangle_n=\T$ for some $n\in\NN.$

\begin{definition}\label{regprop}
A triangulated category $\T$ is called  {\sf regular} if it has a strong generator.
\end{definition}

From now we will consider only  $\kk$\!--linear triangulated categories, where $\kk$ is a base field.
At first, we recall a notion of a proper triangulated category.
\begin{definition}\label{prop}
A $\kk$\!--linear triangulated category  $\T$ will be  called {\sf proper} if the vector spaces
$\bigoplus_{m\in\ZZ}\Hom(X, Y[m])$ are finite-dimensional for any pair of objects $X, Y\in\T.$
\end{definition}
 Regular and proper idempotent complete triangulated categories have many good properties.
 In particular, all of them will be admissible.

Let $\N\subset\T$ be a full triangulated subcategory.
It is called {\sf right admissible}
(resp. {\sf left admissible}) if the inclusion functor has a right
(resp. left) adjoint $\T\to \N.$ The
subcategory $\N\subset\T$ is called {\sf admissible}, if it is right
and left admissible at the same time.
Assume that $\T$ is a proper triangulated category and $\N\subset\T$ is regular and  idempotent complete, then
$\N$ will be admissible in $\T.$

\begin{theorem}{\rm \cite[Th. 1.3]{BV}}\label{saturated}
Let $\T$ be a regular and proper idempotent complete triangulated category.
Then any cohomological functor  from $\T^{\op}$ to the category of finite-dimensional
vector spaces is representable, i.e. it has the form $h^{Y}=\Hom(-, Y).$
\end{theorem}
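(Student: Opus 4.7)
The plan is to reduce to a Brown--representability argument in the derived category of a DG algebra and then exploit the strong generator to force the representing object to be compact.

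First I would choose a strong generator $E\in\T$ with $\langle E\rangle_n=\T$ for some $n$, and, using a fixed DG enhancement of $\T$, form the DG algebra $A:=\dEnd(E)$. By the equivalences recalled in the excerpt, one obtains a triangulated equivalence $\T\simeq\prf A$, the category of compact objects in the compactly generated derived category $\D(A)$, sending $E$ to $A$. Properness of $\T$ then implies that $\bigoplus_i H^i(A)=\bigoplus_i\Hom_{\T}(E,E[i])$ is finite-dimensional, so $A$ is cohomologically bounded and finite-dimensional in each degree.

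Next I would extend $H$ to a cohomological functor $\tilde H:\D(A)^{op}\to\mathrm{Ab}$ that sends coproducts to products. The standard recipe: for $M\in\D(A)$, consider the filtered system $\{P_\alpha\to M\}$ of maps from perfect modules into $M$ and set $\tilde H(M)=\lim_\alpha H(P_\alpha)$; one checks that $\tilde H$ is well defined, cohomological, continuous, and agrees with $H$ on $\prf A$. Since $\D(A)$ is compactly generated by $A$, Brown representability then yields an object $Y\in\D(A)$ with $\tilde H(-)\cong\Hom_{\D(A)}(-,Y)$; in particular $H(X)\cong\Hom_{\D(A)}(X,Y)$ for every $X\in\prf A\simeq\T$.

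The main obstacle, and the place where regularity is truly used, is the final step: showing that $Y$ is compact, so that it corresponds to an object of $\T$. Properness gives that each $\Hom_{\D(A)}(A[i],Y)=H(E[-i])$ is finite-dimensional, but neither the cohomological boundedness of $Y$ nor its compactness is automatic. I would attack this by an inductive construction in the spirit of \cite{BV}: using $\langle E\rangle_n=\T$, build finite approximations $Y_k\in\langle A\rangle_k$ together with compatible maps $Y_k\to Y$ that are $\Hom$-isomorphisms on $\langle A\rangle_k$, keeping each stage finite thanks to the finite-dimensionality of the relevant $\Hom$-spaces. After $n$ iterations one reaches $Y_n\in\langle A\rangle_n=\prf A$ with $Y_n\to Y$ inducing isomorphisms on all of $\prf A$; by Yoneda its cone is zero, so $Y\simeq Y_n\in\prf A$. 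The subtle point is to set up this induction so that the finite-dimensional approximations do not leak into infinite coproducts, reconciling the compactness required of $Y$ with the strong-generation bound.
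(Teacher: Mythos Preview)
The paper does not prove this theorem; it is quoted from \cite[Th.~1.3]{BV} and used as a black box. There is therefore no proof in the paper to compare your sketch against.

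Measured against the original \cite{BV} argument, your Step~3 is the actual proof: one builds approximations $Y_k\in\langle E\rangle_k$ together with natural transformations $h^{Y_k}\to H$ that become isomorphisms on $\langle E\rangle_k$, and strong generation halts the process after $n$ steps. Your Step~2, by contrast, is an unnecessary detour with a real gap: the extension $\tilde H(M)=\lim_\alpha H(P_\alpha)$ is not obviously cohomological or coproduct-to-product, and ``one checks'' hides genuine difficulty here. The Bondal--Van den Bergh construction runs entirely inside $\T$, building the $Y_k$ and the transformations $h^{Y_k}\to H$ directly from the cohomological axiom, never passing to a large $\D(A)$, never invoking Brown representability, and never assuming a DG enhancement. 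You should drop Step~2 and carry out Step~3 intrinsically.

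One substantive point you skate over: for the first approximation $Y_1$ to lie in $\langle E\rangle_1$ (a \emph{finite} sum of shifts of $E$) you need $H(E[i])=0$ for almost all $i$. This does not follow from $H$ having finite-dimensional values. Take $\T=\prf\kk$ and set $H(\kk[i])=\kk$ for every $i$, extended additively; since every triangle in $\prf\kk$ splits, this is cohomological with finite-dimensional values, yet it is not representable. The actual hypothesis in \cite{BV} is that $H$ be of \emph{finite type}, meaning $\bigoplus_i H(X[i])$ is finite-dimensional for each $X$. The paper's phrasing elides this, but every application later in the paper satisfies it; your ``keeping each stage finite'' needs it too.
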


In the paper \cite{BK} such  a triangulated category $\T,$ for which any cohomological functor  from $\T^{\op}$ to the category of finite-dimensional
vector spaces is representable, is called {\sf right saturated} (see \cite{BK, BV}).
It was proved in \cite{BK} that any right saturated  full triangulated subcategory
in  a proper triangulated category will be right admissible.
Since the opposite category is also proper and regular,  Theorem \ref{saturated} directly implies  the following statement.
\begin{proposition}\label{admissible}
Let $\N\subset\T$ be a full triangulated subcategory in  a proper triangulated category
$\T.$ Suppose that $\N$ is regular and  idempotent complete.  Then $\N$ is admissible in $\T.$
\end{proposition}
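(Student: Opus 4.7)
The plan is to extract both adjoints directly from Theorem~\ref{saturated} (right saturatedness) applied to $\N$ and to $\N^{\op}$, using only that $\N$ inherits properness from $\T$.

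First I would observe that $\N$ is proper. Indeed, for any two objects $Y_1,Y_2\in\N$ we have $\Hom_{\N}(Y_1,Y_2[m])=\Hom_{\T}(Y_1,Y_2[m])$ because $\N\subset\T$ is a full subcategory, and the right hand side is part of a finite-dimensional total space by the properness of $\T$. Thus $\N$ is regular, proper, and idempotent complete, so Theorem~\ref{saturated} applies to $\N$ itself.

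To construct the right adjoint, for each $X\in\T$ I would consider the cohomological functor
\[
F_X\colon \N^{\op}\lto \Mod\kk,\qquad Y\longmapsto \Hom_{\T}(Y,X).
\]
Since $Y\in\N\subset\T$ and $X\in\T$, the properness of $\T$ implies that $F_X(Y)$ is finite-dimensional. By Theorem~\ref{saturated} applied to $\N$, the functor $F_X$ is representable by some object $\mr(X)\in\N$, i.e.\ there is a natural isomorphism $\Hom_{\T}(Y,X)\cong\Hom_{\N}(Y,\mr(X))$ for all $Y\in\N$. Standard Yoneda/functoriality arguments then upgrade $X\mapsto\mr(X)$ to a functor $\mr\colon\T\to\N$ which is right adjoint to the inclusion $\N\hookrightarrow\T$.

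For the left adjoint I would use the symmetric argument after passing to opposite categories. The category $\T^{\op}$ is again proper, and $\N^{\op}\subset\T^{\op}$ is idempotent complete and regular: if $E$ is a strong generator of $\N$ with $\langle E\rangle_n=\N$, then since the operation $\diamond$ and the taking of shifts/summands are manifestly self-dual (a triangle $X_1\to X\to X_2$ in $\N$ becomes a triangle $X_2\to X\to X_1$ in $\N^{\op}$), we get $\langle E\rangle_n=\N^{\op}$. Applying the right adjoint construction to the inclusion $\N^{\op}\hookrightarrow\T^{\op}$ yields a right adjoint there, whose opposite is a left adjoint $\bl\colon\T\to\N$ of the original inclusion.

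The main (and essentially only) obstacle is the symmetry input: checking that regularity is preserved by passing to the opposite category. Once one unwinds that the octahedral operation $\diamond$ is self-dual, the statement follows from two applications of Theorem~\ref{saturated}; no further work is needed.
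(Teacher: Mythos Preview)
Your proof is correct and follows essentially the same route as the paper: apply Theorem~\ref{saturated} to $\N$ (which is proper because $\T$ is) and then to $\N^{\op}$ to obtain both adjoints. The only difference is cosmetic: the paper cites \cite{BK} for the step ``right saturated inside a proper category $\Rightarrow$ right admissible,'' whereas you unpack this step explicitly by representing the functor $\Hom_{\T}(-,X)|_{\N}$, which is precisely the standard argument behind that citation.
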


With any subcategory ${\N}\subset \T$ we can associate a full triangulated subcategory ${\N}^{\perp}\subset {\T}$ (resp. ${}^{\perp}{\N}$)
that is called a  {\sf right orthogonal} (resp. {\sf left
orthogonal}). By definition, the
subcategory ${\N}^{\perp}\subset {\T}$ (or ${}^{\perp}{\N}$) consists of all objects $M,$ for which
 ${\Hom(N, M)}=0$ (or ${\Hom(M, N)}=0$)  for all $N\in{\N}$
(see, e.g., \cite{BK, BO}).

\begin{definition}\label{sd}
A {\sf semi-orthogonal decomposition} of a triangulated category
$\T$ is a sequence of full triangulated subcategories ${\N}_1,
\dots, {\N}_n$ in ${\T}$ such that there is an increasing filtration
$0=\T_0\subset\T_1\subset\cdots\subset\T_n=\T$ by left admissible
subcategories for which the left orthogonals ${}^{\perp}\T_{r-1}$ in
$\T_{r}$ coincide with $\N_r$ for all $1\le r\le n.$
in this case  we write $ {\T}=\left\langle{\N}_1, \dots,
{\N}_n\right\rangle. $
\end{definition}

When the triangulated category $\T$ has a semi-orthogonal decomposition
${\T}=\left\langle{\N}_1, \dots, {\N}_n\right\rangle,$ where each
$\N_r$ is equivalent to $\prf\kk,$ we obtain a definition of a full exceptional collection.

\begin{definition}\label{exc}
An object $E$ of a $\kk$\!--linear triangulated category ${\T}$ is
called {\sf exceptional} if  ${\Hom}(E, E[m])=0$ for all $m\ne 0,$
and ${\Hom}(E, E)\cong\kk.$ An {\sf exceptional collection} in ${\T}$ is
a sequence of exceptional objects $\sigma=(E_1,\dots, E_n)$
satisfying the semi-orthogonality condition ${\Hom}(E_i, E_j[m])=0$
for all $m$ if $i>j.$ The collection $\sigma$ is called {\sf full}, if the set $\{E_1,\dots, E_n\}$ classically generates the whole category
$\T.$
\end{definition}

When the field $\kk$ is not algebraically closed it is reasonable to weaken the notions of an exceptional
object and an exceptional collection.
\begin{definition}\label{semi-exc}
An object $E$ of a $\kk$\!--linear triangulated category ${\T}$ is
called {\sf semi-exceptional} if ${\Hom}(E, E[m])=0$ for all $m\ne 0$ and
${\Hom}(E, E)=S,$ where $S$ is a semisimple finite-dimensional $\kk$\!--algebra.
A {\sf semi-exceptional  collection} in ${\T}$ is a
sequence of semi-exceptional objects $(E_1,\dots, E_n)$ with
semi-orthogonality conditions ${\Hom}(E_i, E_j[m])=0$ for all $m$ if
$i>j.$
\end{definition}
It is evident that semi-exceptional objects and collections are stable under base field change.
As above, a semi-exceptional collection
$\sigma=(E_1,\dots, E_n)$ is called {\sf full}, if it classically generates the whole category $\T.$  In this case the triangulated category $\T$ has a
semi-orthogonal decomposition with $\N_r=\langle E_r\rangle,$ and  we will use the following notation $ \T=\langle E_1,\dots, E_n \rangle.$

\begin{example}\label{Severi-Brauer}
{\rm
Let $\ff$ be a field and $D$ be a central simple algebra over $\ff.$
Consider the Severi-Brauer variety $SB(D).$ It is a smooth projective scheme over $\ff.$
The triangulated category $\prf SB(D)$  of perfect complexes on $SB(D)$ has a full semi-exceptional collection
$(S_0, S_1,\dots S_n)$   such that $S_0=\cO_{SB}$ and
$\End(S_i)\cong D^{\otimes i},$ where $n$ is the dimension of the scheme $SB(D).$
 Since each $D^{\otimes i}$ is a matrix algebra over a central division algebra $D_i,$ there is a semi-exceptional collection
$(E_0, E_1,\dots, E_n)$ such that $\End(E_i)\cong D_i$ (see \cite{Be} for a proof).
In this situation $S_i$ are isomorphic to $E_i^{\oplus k_i}$ for some integers $k_i.$
If now $\ff$ is a finite separable extension of $\kk,$ the projective scheme $SB(D)$ can be considered as a smooth projective
scheme over $\kk.$ The triangulated category $\prf SB(D)$ doesn't depend on the base field, but instead $\ff$\!--linear structure we now consider only $\kk$\!--linear structure. Note also that if $D$ is a field $\ff,$ then $SB(D)$ is 0-dimensional and it is  just $\Spec\ff.$
}
\end{example}

\subsection{Derived noncommutative schemes and their gluing}\label{gluDNS}

A {\sf derived noncommutative scheme} $\dX$ over a field $\kk$ is a $\kk$\!--linear DG category of the form $\prfdg\dR,$
where $\dR$ is a cohomologically  bounded DG $\kk$\!--algebra. Under such a definition the triangulated category $\prf\dR$
is called the category of perfect complexes on the scheme $\dX,$ while the derived category
$\D(\dR)$ will be called the derived category of quasi-coherent sheaves on it (see \cite[Def. 2.1]{O_alg}).

\begin{definition}\label{propdef}
A noncommutative scheme $\dX=\prfdg\dR$ will be called {\sf proper} if the triangulated category $\prf\dR$ is proper and it is called {\sf regular} if the triangulated category $\prf\dR$ is regular
\end{definition}
It is not difficult to show that the derived noncommutative scheme $\dX=\prfdg\dR$ is proper iff
the cohomology algebra $\bigoplus_{p\in\ZZ}H^p(\dR)$ is finite-dimensional. In particular, for any finite-dimensional DG algebra $\dR$ the
derived noncommutative scheme $\dX=\prfdg\dR$ is proper.

The regularity is directly related to a smoothness.
However, a property to be smooth depends on the base field $\kk.$

\begin{definition}
A noncommutative scheme $\dX=\prfdg\dR$ is called {\sf $\kk$\!--smooth} if the DG algebra $\dR$ is perfect as the DG bimodule, i.e. as the DG module over  $\dR^{\op}\otimes_{\kk}\dR.$
\end{definition}

In was proved in \cite{LS} that  smoothness is invariant under Morita equivalence.
In particular, the DG category $\prfdg\dR$ is smooth if and only if the DG algebra $\dR$ is smooth.
Any smooth DG category is also regular (see \cite{Lu}). Hence, any smooth noncommutative scheme will be regular too.

Let now $\dX=\prfdg\dR$ and $\dY=\prfdg\dS$ be two arbitrary derived
noncommutative schemes over a base field $\kk.$
A {\sf morphism} $\mf:\dX \to \dY$ between them  is a quasi-functor
$\mF:\prfdg\dS\to \prfdg\dR.$ Each morphism $\mf$ induces the derived functors
\[
\bL \mf^*:=\bL F^*: \D(\dS)\lto \D(\dR)
\quad
\text{and}
\quad
\bR\mf_*:=\bR F_*: \D(\dR)\lto \D(\dS)
\]
 which were defined for a quasi-functor $\mF$ in (\ref{derived_quasi}). The functor  $\bL \mf^*$ and $\bR\mf_*$ will be called the inverse image and the direct image functors, respectively.
As usually, the inverse image functor sends perfect modules to perfect ones, while  the direct image functor commutes with all direct sums.

Now we recall a definition of a gluing operation. Let $\dA$ and $\dB$ be small DG categories and let $\mT$ be some DG $\dB\hy\dA$\!--bimodule,
i.e. a DG  $\dB^{\op}\otimes\dA$\!--module.
One can define a so called lower triangular DG category that is related to  the data
$(\dA, \dB; \mT)$ (see \cite{Ta, KL, O_glue, O_Krull, O_alg}).

\begin{definition}\label{upper_tr}
Let $\dA$ and $\dB$ be two small DG categories and let $\mT$ be a DG $\dB\hy\dA$\!--bimodule.
The {\sf lower triangular} DG category $\dC=\dA\underset{\mT}{\with}\dB$ is defined as follows:
\begin{enumerate}
\item[1)] $\Ob(\dC)=\Ob(\dA)\bigsqcup\Ob(\dB),$

\item[2)]
$
\dHom_{\dC}(\mX, \mY)=
\begin{cases}
 \dHom_{\dA}(\mX, \mY), & \text{ when $\mX, \mY\in\dA$}\\
     \dHom_{\dB}(\mX, \mY), & \text{ when $\mX, \mY\in\dB$}\\
      \mT(\mY, \mX), & \text{ when $\mX\in\dA, \mY\in\dB$}\\
      0, & \text{ when $\mX\in\dB, \mY\in\dA$}
\end{cases}
$
\end{enumerate}
with the composition law coming from the DG categories $\dA, \dB$ and the bimodule structure on $\mT.$
\end{definition}

\begin{definition}\label{gluing_cat}
Let $\dX=\prfdg\dR$ and $\dY=\prfdg\dS$ be two derived noncommutative schemes and let $\mT$ be a DG $\dS\hy\dR$\!--bimodule. A {\sf gluing} $\dX\underset{\mT}{\oright}\dY$ of $\dX$ and $\dY$ via (or with respect to) $\mT$
is the DG category
$\prfdg\,(\dR\underset{\mT}{\with}\dS).$
\end{definition}
The DG category $\prfdg\,(\dR\underset{\mT}{\with}\dS)$ gives a derived noncommutative scheme
exactly in the case when the DG algebra $\dR\underset{\mT}{\with}\dS$ is cohomologically bounded.
It happens if and only if the  DG bimodule $\mT$ has only finitely many nontrivial cohomology.
We will write simply $\dX\oplus\dY,$ when $\mT\cong \m0.$ In this case, we have $\dX\oplus\dY\cong \dY\oplus\dX.$

It is almost evident that for any gluing $\prfdg\,(\dR\underset{\mT}{\with}\dS)$
the triangulated category $\prf\,(\dR\underset{\mT}{\with}\dS)$ has the following semi-orthogonal decomposition
$\langle\prf\dR, \prf\dS\rangle.$
It also can be shown that any semi-orthogonal decomposition is induced by a gluing. We have the following proposition.

\begin{proposition}\label{gluing_semi-orthogonal}\cite[Prop 3.8]{O_glue}
Let $\dE$ be a DG $\kk$\!--algebra. Suppose there is a semi-orthogonal decomposition
$\prf\dE=\langle \A, \B\rangle.$ Then the DG category $\prfdg\dE$ is quasi-equivalent to a gluing
$\dA\underset{\mT}{\oright}\dB,$ where $\dA, \dB\subset\prfdg\dE$ are full DG subcategories
with the same objects as $\A$ and $\B,$ respectively, and the DG $\dB\hy\dA$\!--bimodule is given by the rule
\begin{equation}\label{bimodule}
\mT ( \mY, \mX)=\dHom_{\dE}(\mX, \mY), \quad \text{with}\quad \mX\in\dA \;\text{and}\; \mY\in\dB.
\end{equation}
\end{proposition}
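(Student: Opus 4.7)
The plan is to construct an explicit quasi-fully-faithful DG functor from $\dC := \dA\underset{\mT}{\with}\dB$ into $\prfdg\dE$ and then invoke the Morita-type functoriality of $\prfdg(-)$ to turn it into a quasi-equivalence.

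First, I would define a DG functor $\Phi\colon\dC\to\prfdg\dE$ that is the tautological inclusion on objects. On hom-complexes $\Phi$ is the identity wherever $\dHom_\dC$ literally agrees with $\dHom_\dE$, namely on $\dHom_\dA(\mX,\mY)$, on $\dHom_\dB(\mX,\mY)$, and on $\mT(\mY,\mX)=\dHom_\dE(\mX,\mY)$ for $\mX\in\dA$, $\mY\in\dB$; on the remaining hom-complex $\dHom_\dC(\mX,\mY)=0$ with $\mX\in\dB$, $\mY\in\dA$ I take the zero map into $\dHom_\dE(\mX,\mY)$. Compatibility of $\Phi$ with the composition law is immediate from Definition~\ref{upper_tr}, because the bimodule structure on $\mT$ is precisely the one inherited from $\prfdg\dE$, and any composition that passes through the direction $\dB\to\dA$ vanishes both in $\dC$ (by definition) and in $\prfdg\dE$ (upon applying $\Phi$).

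Next, I would verify that $\Phi$ is quasi-fully-faithful. On all the pairs $(\mX,\mY)$ where $\Phi$ is the identity this is clear. The only nontrivial case is $\mX\in\dB$, $\mY\in\dA$, where quasi-fully-faithfulness reduces to the assertion that $\dHom_\dE(\mX,\mY)$ is acyclic. This is precisely the semi-orthogonality direction of $\prf\dE=\langle\A,\B\rangle$: from $\B={}^\perp\A$ we obtain $H^n(\dHom_\dE(\mX,\mY))=\Hom_{\prf\dE}(\mX,\mY[n])=0$ for every $n\in\ZZ$, so the zero map is a quasi-isomorphism.

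Finally, I would promote $\Phi$ to a quasi-equivalence $\prfdg\Phi\colon\prfdg\dC\stackrel{\sim}{\to}\prfdg\dE$. Since $\Phi$ is quasi-fully-faithful, the induced DG functor between the Morita completions (using $\prfdg(\prfdg\dE)\simeq\prfdg\dE$, as $\prfdg\dE$ is already pretriangulated and Karoubi-closed) remains quasi-fully-faithful. For essential surjectivity on $H^0$, I would use that the semi-orthogonal decomposition produces for every $X\in\prf\dE$ an exact triangle $A\to X\to B\to A[1]$ with $A\in\A$, $B\in\B$, so the set of objects $\A\cup\B$ classically generates $\prf\dE$ in two steps. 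The main technical point is this last step: it requires invoking the standard Morita comparison (see e.g.~\cite{Ke2,To}) to the effect that a quasi-fully-faithful DG functor whose image classically generates the target in the idempotent-complete sense becomes a quasi-equivalence after applying $\prfdg(-)$. Once this is granted, everything else is routine and the proposition follows.
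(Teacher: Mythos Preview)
The paper does not supply a proof of this proposition: it is quoted verbatim from \cite[Prop.~3.8]{O_glue}, and only a brief remark follows. There is therefore no in-paper argument to compare against.

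Your proposed argument is the standard one and is correct. The only place worth a second look is the direction of the semi-orthogonality: with the paper's convention (Definition~\ref{sd}), $\prf\dE=\langle\A,\B\rangle$ indeed means $\B={}^{\perp}\A$, so $\Hom_{\prf\dE}(\mX,\mY[n])=0$ for $\mX\in\B$, $\mY\in\A$, exactly as you use it. The Morita step you invoke---that a quasi-fully-faithful DG functor whose essential image classically generates the target induces, after passing to $\prfdg(-)$, a quasi-equivalence---is precisely the ``devissage'' principle the paper itself cites repeatedly (e.g.\ \cite[4.2]{Ke}, \cite[Prop.~1.15]{LO}). So the proof goes through without gaps.
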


Note that the DG categories $\dA$ and $\dB$ also have the form $\prfdg\dR$ and $\prfdg\dS,$ respectively, and the DG category
$\prfdg\dE$ is quasi-equivalent to $\prfdg\,(\dR\underset{\mT}{\with}\dS).$

Derived noncommutative schemes, which are obtained by gluing, inherit many properties of its components if the gluing DG bimodule is also good enough.
For example, it is almost evident that the gluing
$\dX\underset{\mT}{\oright}\dY$ is proper if and only if
the components  $\dX, \dY$ are proper and   $\dim_{\kk}\bigoplus_i  H^i(\mT)<\infty .$
It is not difficult to show that the  gluing $\dX\underset{\mT}{\oright}\dY$ is regular if and only if
the components $\dX$ and $\dY$ are regular (see \cite[Prop. 3.20, 3.22]{O_glue}) and, hence,
this property does not depend on the DG bimodule $\mT.$
However, it is important to note that  the smoothness depends on $\mT.$
The following proposition is proved in \cite{LS}.

\begin{proposition}\cite[3.24]{LS}\label{smooth_glue}
Let $\dX=\prfdg\dR$ and $\dY=\prfdg\dS$ be two derived noncommutative schemes over $\kk.$
Then the following conditions are equivalent.
\begin{enumerate}
\item[1)] The gluing derived noncommutative schem $\dX\underset{\mT}{\oright}\dY$ is smooth,
\item[2)] The components $\dX$ and $\dY$ are smooth and the DG $\dS\hy\dR$\!--bimodule $\mT$ is a perfect.
\end{enumerate}
\end{proposition}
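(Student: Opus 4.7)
The plan hinges on a single short exact sequence of $\dC$-bimodules extracted from the gluing structure, where $\dC := \dR\underset{\mT}{\with}\dS$ is the lower triangular DG algebra underlying the gluing $\dX\underset{\mT}{\oright}\dY$. Recall that $\dC$ comes with mutually orthogonal idempotents $e_\dR, e_\dS$ summing to $1$, and satisfying $e_\dR\dC e_\dR = \dR$, $e_\dS\dC e_\dS = \dS$, $e_\dS\dC e_\dR = \mT$, and $e_\dR\dC e_\dS = 0$; so that smoothness of $\dX\underset{\mT}{\oright}\dY$ is, by definition, perfectness of $\dC$ as a module over $\dC^{\op}\otimes_\kk\dC$. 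A direct computation in the decomposition $\dC = \dR\oplus\mT\oplus\dS$ shows that the two multiplication maps assemble into a short exact sequence of $\dC$-bimodules
\[
0 \lto \mT \lto (\dC e_\dR)\otimes_\dR(e_\dR\dC) \oplus (\dC e_\dS)\otimes_\dS(e_\dS\dC) \stackrel{\mu_\dR+\mu_\dS}{\lllto} \dC \lto 0,
\]
in which $\mT$ sits anti-diagonally inside the two ``$\mT$-components'' of the middle sum that are identified under multiplication.

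For the direction (2)$\Rightarrow$(1), it suffices by this triangle to show that each middle term together with $\mT$ itself is perfect as a $\dC$-bimodule. I would introduce extension-of-scalars functors $F_\dR:\D(\dR^{\op}\otimes\dR)\to\D(\dC^{\op}\otimes\dC)$, $M\mapsto \dC e_\dR\otimes_\dR^L M\otimes_\dR^L e_\dR\dC$, analogously $F_\dS$, and $G:\D(\dS^{\op}\otimes\dR)\to\D(\dC^{\op}\otimes\dC)$, $N\mapsto \dC e_\dS\otimes_\dS^L N\otimes_\dR^L e_\dR\dC$. Each commutes with arbitrary coproducts and sends the canonical free rank-one generator to a bimodule of the form $\dC e_X \otimes_\kk e_Y\dC$, which is the direct summand of $\dC^{\op}\otimes\dC$ cut out by the idempotent $e_X\otimes e_Y$ and hence perfect. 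Therefore each of $F_\dR, F_\dS, G$ sends perfect objects to perfect objects. Applying them to the perfect bimodules $\dR, \dS$ (smoothness of the components) and $\mT$ (the hypothesis), we conclude that all three outer terms of the sequence are perfect $\dC$-bimodules, and hence so is $\dC$.

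The direction (1)$\Rightarrow$(2) is subtler. Using the four orthogonal idempotents $e_i\otimes e_j \in \dC^{\op}\otimes\dC$, any $\dC$-bimodule $M$ splits into a quadruple $(e_\dR M e_\dR,\,e_\dR M e_\dS,\,e_\dS M e_\dR,\,e_\dS M e_\dS)$ of modules over $\dR^{\op}\otimes\dR,\,\dR^{\op}\otimes\dS,\,\dS^{\op}\otimes\dR,\,\dS^{\op}\otimes\dS$ respectively, connected by the residual action of $\mT$; for $M = \dC$ this quadruple is the lower-triangular $(\dR, 0, \mT, \dS)$. My plan is to first peel off the diagonal corners using the idempotent two-sided ideals of $\dC^{\op}\otimes\dC$, showing that $\dR$ is perfect over $\dR^{\op}\otimes\dR$ and $\dS$ over $\dS^{\op}\otimes\dS$, and then to bootstrap these to extract perfectness of $\mT$ over $\dS^{\op}\otimes\dR$ by running the triangle of the first step in reverse. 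The main obstacle is precisely that naive corner restriction $M \mapsto e_X M e_Y$ does not preserve perfectness on its own: applied to the free module $\dC^{\op}\otimes\dC$, the functor corresponding to $e_\dR(-)e_\dR$ yields $e_\dR\dC \otimes_\kk \dC e_\dR \cong \dR\otimes_\kk(\dR\oplus\mT)$, which is perfect over $\dR^{\op}\otimes\dR$ only if $\mT$ is already perfect as a right $\dR$-module --- exactly what one wants to prove. Overcoming this apparent circularity is the technical heart of [LS, Prop. 3.24], and requires a careful filtered analysis of $\dC^{\op}\otimes\dC$-modules that reflects the lower-triangular shape and allows the perfectness conclusions to be extracted in sequence rather than simultaneously.
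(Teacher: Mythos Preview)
The paper does not supply its own proof of this proposition: it is stated with the attribution \cite[3.24]{LS} and the preceding sentence reads ``The following proposition is proved in \cite{LS}.'' There is therefore nothing in the paper to compare your argument against beyond the bare citation.

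Evaluating your proposal on its own merits: the direction (2)$\Rightarrow$(1) is essentially complete. The short exact sequence of $\dC$-bimodules is correct, the functors $F_\dR, F_\dS, G$ are well-defined and preserve compactness for the reason you give (they send the free generator to a summand of $\dC^{\op}\otimes\dC$ and commute with coproducts), and the triangle then finishes the job.

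The direction (1)$\Rightarrow$(2), however, is not a proof but a description of why the naive approach fails, followed by an appeal back to \cite{LS}. You correctly identify the circularity --- corner restriction $e_\dR(-)e_\dR$ applied to the free $\dC$-bimodule produces $\dR\otimes_\kk(\dR\oplus\mT)$, whose perfectness over $\dR^{\op}\otimes\dR$ already presupposes perfectness of $\mT$ as a right $\dR$-module --- but you do not break it. Saying that this ``requires a careful filtered analysis'' and is ``the technical heart of [LS, Prop.~3.24]'' is a citation, not an argument. If you intend to give a self-contained proof, you must actually carry out that analysis: for instance, one can exploit the semi-orthogonal structure to show that the restriction functors along the surjections $\dC\to\dR$ and $\dC\to\dS$ have left adjoints preserving perfectness, which yields smoothness of $\dR$ and $\dS$ first, and only then use the now-established smoothness of the components together with the triangle to deduce perfectness of $\mT$. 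As written, your (1)$\Rightarrow$(2) is a plan with its key step outsourced.
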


\section{Finite-dimensional DG algebras and Auslander construction }\label{applications}

\subsection{Finite-dimensional DG algebras}

Let $\dR=(\gR, \dr)$ be a finite-dimensional DG algebra over a base field $\kk.$
The algebra $R$ with the grading forgotten will be denoted by $\un{R}.$ In other words,  $\un{R}$ is the underlying ungraded algebra of $\gR.$
We consider DG algebras with identity $1\in R^0$ and $\dr(1)=0.$ Moreover, we will assume that the DG algebra $\dR$ is not acyclic, i.e. it is not quasi-isomorphic to $0.$
This means that the cohomology algebra $H^*(\dR)$ is not $0,$ and it is equivalent to ask that the image of $1$ in the cohomology $H^0(\dR)$ is not equal to $0.$

Denote by $\rd=\rd(\un{R})\subset\un{R}$ the (Jacobson) radical of the $\kk$\!--algebra $\un{R}.$ We know that $\rd^s=0$ for some $s.$
Define the index of nilpotency $n=n(\un{R})$ of $\un{R}$ as the smallest such integer $s$
that $\rd^s=0.$

Let us denote by $\Aut_{\kk}(\un{R})$ the group scheme of automorphisms of the ungraded $\kk$\!--algebra $\un{R}.$
Any $\ZZ$\!--grading on $\un{R}$ is the same as an action of the multiplicative group scheme ${\mathbf G}_{m,\kk}$ on $\un{R},$ i.e.
it is given by a homomorphism of the group schemes
${\mathbf G}_{m,\kk}\to \Aut_{\kk}(\un{R}).$
The radical $\rd$ is invariant with respect to the action of the group scheme $\Aut_{\kk}(\un{R})$ and, hence, it is also $\ZZ$\!--graded.
Thus, $\rd\subset\gR$ is a graded ideal and we denote by $\gS$ the quotient graded algebra $\gR/\rd.$
The underline ungraded algebra $\un{S}$ is semisimple.

\begin{remark}\label{remuse}{\rm
Let us note that the radical $J$ does not necessary contain the sum $R_{\ne 0}:=\bigoplus\limits_{q\ne 0} R^q,$
because $R_{\ne 0}$ is not necessary an ideal. On the other hand, any homogeneous element of $s\in R_{i}, i\ne 0$ is a nilpotent in the case
when the algebra $R$ is finite-dimensional over $\kk.$ Therefore, any $\ZZ$\!-grading on a product $D_1\times\cdots\times D_m$ of finite-dimensional division $\kk$\!--algebras $D_i$
is trivial.
}
\end{remark}

It is easy to see that  the radical $\rd\subset R$ is not necessary a DG ideal, in general, and $\dr(\rd)$ is not necessary a subspace of $\rd.$
In fact, with any two-sided graded ideal $I\subset\gR$ we can associate two DG ideals $\mI_{-}$ and $\mI_{+}.$

\begin{definition}\label{intext}
Let $\dR=(\gR, \dr)$ be a finite-dimensional DG algebra and $I\subset\gR$ be a graded (two-sided) ideal.
An {\sf internal} DG ideal $\mI_{-}=(I_{-}, \dr)$ consists of all $r\in I$ such that $\dr( r)\in I,$ while
an {\sf external} DG ideal  $\mI_{+}=(I_{+}, \dr)$  is the sum $I+\dr (I).$
\end{definition}

It is easy to see that $I_{-}$ and $I_{+}$ are really two-sided graded ideals of $\gR.$ It is evident that they are closed under the action of the differential $\dr.$
Thus, for any (two-sided) ideal $I\subset\gR$ we obtain two DG ideals  $\mI_{-}$ and $\mI_{+}$ in the DG algebra $\dR.$
If the ideal $I\subset\gR$ is closed under action of the differential $\dr,$ then the internal and external DG ideals $\mI_{-}$ and $\mI_{+}$
coincide with the ideal $I.$

\begin{definition}
Let $\rd\subset\gR$ be the radical of $\gR.$ The DG ideals $\rdi$ and $\rde$ will be called {\sf internal} and {\sf external} DG radicals of DG algebra $\dR.$
\end{definition}

Let $\gS_{-}$ and $\gS_{+}$ be the quotient graded algebras $\gR/\rd_{-}$ and $\gR/\rd_{+},$ respectively. The differential $\dr$ on $\gR$ induces differentials on these graded algebras
and we obtain two DG algebras $\dS_{-}=\dR/\rdi$ and $\dS_{+}=\dR/\rde.$

\begin{lemma}\label{quasi-is}
The natural homomorphism of DG algebras $\dS_{-}\to\dS_{+}$ is a quasi-isomorphism.
\end{lemma}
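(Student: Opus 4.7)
The plan is to exhibit a short exact sequence of complexes with acyclic kernel, so the long exact sequence in cohomology forces the map to be an isomorphism on cohomology.

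First I would observe that there is a chain of graded subspaces $\rdi \subset \rd \subset \rd + \dr(\rd) = \rde$, since $J_{-} \subset J$ by definition and clearly $J \subset J + \dr(J)$. Because both $\rdi$ and $\rde$ are two-sided DG ideals of $\dR$ (this is checked directly: $\dr(\rdi) \subset \rdi$ follows from $\dr^2 = 0$, and $\dr(\rde) = \dr(\rd) \subset \rde$), the inclusion $\rdi \hookrightarrow \rde$ induces a surjective homomorphism of DG algebras $\dS_{-} \twoheadrightarrow \dS_{+}$ whose kernel is the DG bimodule $\rde/\rdi$. It therefore suffices to prove that the complex $\rde/\rdi$ is acyclic; the long exact cohomology sequence attached to
\[
0 \lto \rde/\rdi \lto \dS_{-} \lto \dS_{+} \lto 0
\]
will then force $H^{*}(\dS_{-}) \xrightarrow{\sim} H^{*}(\dS_{+})$.

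For the acyclicity of $\rde/\rdi$, I would argue directly on cycles. Take any cocycle representative, i.e.\ an element $x \in J_{+}$ whose class in $J_{+}/J_{-}$ is closed, meaning $\dr(x) \in J_{-}$. Since $J_{+} = J + \dr(J)$, we may write $x = j + \dr(j')$ with $j, j' \in J$, whence $\dr(x) = \dr(j)$. The condition $\dr(j) \in J_{-} \subset J$ is exactly the defining condition for $j$ to lie in the internal DG radical $J_{-}$. Consequently $[x] = [\dr(j')]$ in $J_{+}/J_{-}$. But $j' \in J \subset J_{+}$, so $[j']$ is a well-defined element of $J_{+}/J_{-}$, and $\dr([j']) = [\dr(j')] = [x]$. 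Hence every cocycle in $\rde/\rdi$ is a coboundary, i.e.\ the complex is acyclic.

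There is no real obstacle here; the statement is essentially a formal consequence of the definitions of the internal and external DG ideals attached to a graded ideal. The only thing to be slightly careful about is that one really has the required inclusion $\rdi \subset \rde$ (so that the quotient $\dS_{-}/\rde$ makes sense as a quotient of $\dS_{-}$), and that $\rde/\rdi$ is genuinely a subcomplex of $\dS_{-}$ rather than just a graded subspace — both facts follow immediately from $\dr^{2} = 0$ and from $J_{-}$, $J_{+}$ being closed under $\dr$. The cocycle/coboundary calculation above is then the entire content of the lemma.
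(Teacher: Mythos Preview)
Your proof is correct and follows essentially the same route as the paper's own argument: reduce to showing the kernel $\rde/\rdi$ is acyclic, then for a cocycle $x = j + \dr(j')$ use $\dr(j) \in J_{-} \subset J$ to conclude $j \in J_{-}$, so $[x] = \dr[j']$. The paper's proof is the same computation with different variable names and slightly less surrounding commentary.
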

\begin{proof}
The morphism $\dS_{-}\to\dS_{+}$ is surjective and its kernel is the DG module $\rde/\rdi.$ Thus, we have to check that the complex $\rde/\rdi$ is acyclic.
Let $x\in\rde$ be an element such that  $\dr (x)=w$ with $w\in\rdi.$ We know that $x=y+\dr (z),$ where $y, z\in\rd.$
Since $\dr (y)=w,$ then, by definition $\rdi,$ we have $y\in\rdi.$ Therefore $\bar{x}=\dr (\bar{z})$ in the quotient  $\rde/\rdi,$
where $\bar{x}, \bar{z}$ are images of the elements $x, z$ in the quotient $\rde/\rdi.$
\end{proof}
This implies that a priori different  DG ideals $\rdi$ and $\rde$ give us quasi-isomorphic DG algebras $\dS_{-}$ and $\dS_{+}.$
A useful property of the DG ideal $\rdi$ is its nilpotency as a subideal of $\rd,$ while a useful property of the DG ideal $\rde$
is semisimplicity of the underline ungraded algebra $\un{S}_{+}$ as the quotient of the semisimple algebra $\un{S}=\un{R}/\rd.$

With any DG algebra $\dR=(\gR, \dr)$ we can associate the DG subalgebra $\dZ(\dR)=(Z(\dR), 0)$ with zero differential, which  consists of all cocycles, i.e. elements $r\in R$ with $d(r)=0.$ The DG algebra $\dZ(\dR)$ has a natural two-sided DG ideal $\dI(\dR)$ that is formed by all coboundaries. The quotient DG algebra $\dH(\dR)=\dZ(\dR)/\dI(\dR)$ is called the cohomology DG algebra for $\dR.$

The following proposition is a particular case of a more general statement \cite[Lemma 22.27.6]{StPr}, but we give a short proof here for the case of  finite-dimensional DG algebras.

\begin{proposition}\label{obj}
Let $\dR=(\gR, \dr)$ be a finite-dimensional DG algebra over a base field $\kk.$ Let $\mM\in \prfdg\dR$ be a perfect DG $\dR$\!--module.
Then $\mM$ is homotopy equivalent to a finite-dimensional h-projective DG $\dR$\!--module and  the DG endomorphism algebra $\dEnd_{\dR}(\mM)$ is quasi-isomorphic to a finite-dimensional DG algebra.
\end{proposition}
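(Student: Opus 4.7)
The plan is to construct a finite-dimensional h-projective DG $\dR$-module $\mM'$ that is homotopy equivalent to $\mM$; the claim about $\dEnd_{\dR}(\mM)$ then follows formally, since for such $\mM'$ one has $\dHom_{\dR}(\mM',\mM')\hookrightarrow \Hom_{\kk}(\mM',\mM')$ and so $\dEnd_{\dR}(\mM')$ is finite-dimensional, while $\mM\simeq\mM'$ in $\prfdg\dR$ implies a quasi-isomorphism of DG endomorphism algebras.

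First I would unpack what perfectness gives us. Since $\mM\in\prfdg\dR$, there exists $\mP\in\SFf\dR$ such that $\mM$ is a direct summand of $\mP$ in $\prf\dR$. As $\dR$ is finite-dimensional, the representable module $\mh^{\dR}=\dR$ is finite-dimensional, and the finite filtration $0=\mPhi_0\subset\dots\subset\mPhi_m=\mP$ whose quotients $\mPhi_{i+1}/\mPhi_i$ are finite direct sums of shifts $\dR[n]$ forces $\mP$ to be finite-dimensional over $\kk$. In particular, $H^{*}(\mM)$ is finite-dimensional, being a direct summand of the finite-dimensional $H^{*}(\mP)$.

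The heart of the proof is then to build, by induction on the nilpotency filtration of the radical, a finitely generated semi-free DG module $\mM'$ with a quasi-isomorphism $\mM'\to\mM$ whose underlying graded vector space is finite-dimensional. The ungraded algebra $\un{R}$ is finite-dimensional, hence its radical $\rd$ is nilpotent, say $\rd^{n}=0$. I would pick homogeneous cocycles $y_{1},\dots,y_{t}\in\mM$ whose classes give a $\kk$-basis of $H^{*}(\mM)/H^{*}(\mM)\cdot\bar\rd$, and set $\mM'_{0}=\bigoplus_{i}\dR[n_{i}]$ with the DG map $\mM'_{0}\to\mM$ sending the generator of the $i$-th summand to $y_{i}$. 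If $\mM'_{0}\to\mM$ is not yet a quasi-isomorphism, one attaches finitely many new free generators to kill the remaining cohomology of the mapping cone, arranging at each step that the differentials of the new generators lie in the product (on the right) of the previously built module by $\rd$. Because $H^{*}(\mM)$ is finite-dimensional only finitely many generators are needed at each stage, and since $\rd^{n}=0$ the iteration terminates after at most $n$ steps. The resulting $\mM'$ is a finitely generated semi-free (hence h-projective) DG module with $\dim_{\kk}\mM'<\infty$, and the composition $\mM'\to\mM$ is a quasi-isomorphism; both sides being h-projective, it is a homotopy equivalence in $\dP(\dR)$. The two claims of the proposition follow as indicated at the outset.

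The main obstacle is the inductive construction of $\mM'$: one must verify that at the $k$-th stage the discrepancy between $\mM'_{k}$ and $\mM$ can be measured in $\rd^{k}/\rd^{k+1}$, so that termination after $n=n(\un R)$ steps is guaranteed. This is the DG analogue of the classical construction of a minimal projective resolution over an Artinian ring, and it uses finite-dimensionality of $\dR$ in two essential places, namely nilpotency of $\rd$ and the resulting finite-generation of each $H^{*}(\mM)\cdot\rd^{k}/H^{*}(\mM)\cdot\rd^{k+1}$. An alternative, less constructive route would be to lift the summand idempotent $\bar e\in H^{0}(\dEnd_{\dR}(\mP))$ to a closed idempotent $e\in Z^{0}(\dEnd_{\dR}(\mP))$ and set $\mM'=e\cdot\mP$, but the idempotent-lifting step is itself non-trivial because the ideal of coboundaries in $Z^{0}$ is not automatically contained in the Jacobson radical; the minimal model argument sidesteps this issue entirely.
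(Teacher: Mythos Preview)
Your minimal-model construction has a genuine gap: the termination claim ``since $\rd^n=0$ the iteration terminates after at most $n$ steps'' is false as stated. Take $\dR=k[x]/x^2$ concentrated in degree $0$ with zero differential, and $\mM=(R\xrightarrow{\cdot x}R)$ in degrees $-1,0$. This $\mM$ is finitely generated semi-free, hence perfect and already a finite-dimensional h-projective model of itself. But your procedure starts from $H^*(\mM)/H^*(\mM)\bar\rd=H^*(\mM)=k\oplus k[1]$ (since $x$ annihilates $H^*(\mM)$), takes $\mM'_0=R\oplus R[1]$ with zero differential, and maps $e_0\mapsto 1,\ e_{-1}\mapsto x$. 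After attaching two cells $f_{-1},f_{-2}$ with $d(f_{-1})=xe_0$, $d(f_{-2})=xe_{-1}$ you find that $H^{-1}(\mM'_1)\to H^{-1}(\mM)$ still has a one-dimensional kernel, represented by $e_{-1}-xf_{-1}$; this cocycle is \emph{not} in $\mM'_1\cdot\rd$, so your filtration condition cannot be maintained, and the process never stops. The underlying reason is that your argument uses only $\dim_k H^*(\mM)<\infty$, not perfectness; for $\mM=k$ over $k[x]/x^2$ (not perfect) the same construction is visibly infinite, and nothing in your write-up distinguishes the two cases.

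Ironically, the ``alternative, less constructive route'' you discard is exactly what the paper does, and it works. One takes $\mN\in\SFf\dR$ finite-dimensional with $\mM$ a homotopy summand of $\mN$, sets $\dE=\dEnd_{\dR}(\mN)$, and lifts the idempotent $e\in H^0(\dE)$ through the surjection of \emph{finite-dimensional} graded algebras $Z(\dE)\twoheadrightarrow H(\dE)$. Your objection that the coboundaries need not lie in the Jacobson radical of $Z^0$ is beside the point: idempotents lift along \emph{any} surjective homomorphism of finite-dimensional $\kk$-algebras (reduce modulo the radicals to a surjection of semisimple algebras, where it is obvious, then use that two idempotent lifts modulo a nilpotent ideal are conjugate by a unit congruent to $1$). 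The lifted idempotent $\widetilde e\in Z^0(\dE)$ is a genuine closed degree-zero endomorphism of $\mN$, so $\mM':=\ker\widetilde e$ is a finite-dimensional DG submodule, h-projective as a DG direct summand of a semi-free module, and homotopy equivalent to $\mM$.
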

\begin{proof} If the DG module $\mM$ belongs to the full DG subcategory $\SFf\dA\subset \SF\dA$ of finitely generated semi-free
DG modules, then it is finite-dimensional and the DG algebra $\dEnd_{\dR}(\mM)$ is finite-dimensional too.
By definition of  perfect DG modules, it is isomorphic  to  a direct summand of an object from $\SFf\dA$
in the homotopy category $\Ho(\SF\dR)\cong\D(\dR).$ Let $\mN\in\SFf\dR$ be a such DG module that $\mM$ is a direct summand of
$\mN$ in $\prf\dR.$ Consider the finite-dimensional DG algebra $\dE=\dEnd_{\dR}(\mN).$ Since $\mM$ is a direct summand of $\mN$ in
$\prf\dR,$ there is an idempotent $e\in \End_{\prf\dR}(\mN)= H^0(\dEnd_{\dR}(\mN))$ which acts trivially on $\mM$ and acts as the identity on a complement to $\mM$
in $\mN.$ Consider the surjective map of the graded finite-dimensional algebras $Z(\dE)\to H(\dE).$
It is a well-known that under a surjective homomorphism of finite-dimensional algebras  any idempotent can be lifted.
Thus,  the idempotent $e\in H^0(\dE)$ can be lifted to an idempotent $\widetilde{e}\in Z^0(\dE).$ Now it can be considered as an element of $\dE.$ It induces an endomorphism $\widetilde{e}:\mN\to \mN$ of the DG
$\dR$\!--module $\mN.$
Denote by $\mM'$ the DG submodule of $\mN$ which is the kernel of this endomorphism. Since $\widetilde{e}$ is an idempotent, it gives a decomposition of DG modules $\mN=\mM'\oplus\mK.$
Thus, the DG module $\mM'$ is isomorphic to $\mM$ in the derived category $\D(\dR).$ Moreover, the DG module $\mM'$ is h-projective as a direct summand of a semi-free DG module $\mN.$ Hence, the h-projective DG modules $\mM$ and $\mM'$ are homotopy equivalent in the DG category of all h-projective DG
$\dR$\!--modules. Finally, we obtain that the DG algebra of endomorphisms $\dEnd_{\dR}(\mM)$ is quasi-isomorphic to the finite-dimensional DG algebra $\dEnd_{\dR}(\mM').$
\end{proof}

The following corollary is an immediate consequence of Proposition \ref{obj}.

\begin{corollary}\label{fin_diff}
Let $\dR_1$ and $\dR_2$ be two DG $\kk$\!--algebras. Assume that the DG categories $\prfdg\dR_1$ and $\prfdg\dR_2$ are quasi-equivalent.
If the DG algebra $\dR_1$ is quasi-isomorphic to a finite-dimensional DG algebra, then the DG algebra $\dR_2$ is also
quasi-isomorphic to some finite-dimensional DG algebra.
\end{corollary}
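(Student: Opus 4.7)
My plan is to reduce the statement to a direct application of Proposition \ref{obj} via a generator-transport argument. First, by hypothesis there exists a finite-dimensional DG $\kk$\!--algebra $\dR_1'$ and a quasi-isomorphism $\dR_1' \xrightarrow{\sim} \dR_1,$ which induces a quasi-equivalence $\prfdg\dR_1' \simeq \prfdg\dR_1.$ Composing with the given quasi-equivalence $\prfdg\dR_1 \simeq \prfdg\dR_2,$ we obtain a quasi-equivalence $\mF : \prfdg\dR_1' \xrightarrow{\sim} \prfdg\dR_2.$

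Next I would exploit the Yoneda embedding. The free rank-one module $\dR_2,$ viewed as an object of $\prfdg\dR_2$ via $\mh^\bullet,$ satisfies
\[
\dEnd_{\prfdg\dR_2}(\dR_2) \;\cong\; \dHom_{\dR_2}(\dR_2,\dR_2) \;\cong\; \dR_2.
\]
Let $\mM \in \prfdg\dR_1'$ be any object with $\mF(\mM)$ isomorphic to $\dR_2$ in $\Ho(\prfdg\dR_2);$ such an $\mM$ exists because $H^0(\mF)$ is an equivalence of homotopy categories. Since $\mF$ is a quasi-equivalence, the induced map on Hom-complexes
\[
\mF_{\mM,\mM} : \dEnd_{\dR_1'}(\mM) \;\lto\; \dEnd_{\dR_2}(\mF\mM)
\]
is a quasi-isomorphism, and the right-hand side is quasi-isomorphic to $\dEnd_{\dR_2}(\dR_2) \cong \dR_2$ because isomorphic objects in $\Ho(\prfdg\dR_2)$ have homotopy equivalent DG endomorphism algebras (using the h-projective representatives and the standard argument that an isomorphism in $H^0$ is represented by a closed degree-zero morphism admitting a homotopy inverse).

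Finally, I invoke Proposition \ref{obj} for the finite-dimensional DG algebra $\dR_1'$ and the perfect DG module $\mM:$ the conclusion is that $\dEnd_{\dR_1'}(\mM)$ is quasi-isomorphic to a finite-dimensional DG algebra. Stringing the quasi-isomorphisms together,
\[
\dR_2 \;\simeq\; \dEnd_{\dR_2}(\mF\mM) \;\simeq\; \dEnd_{\dR_1'}(\mM) \;\simeq\; (\text{finite-dimensional DG algebra}),
\]
which is exactly the claim. The only non-routine step is the second quasi-isomorphism (transport of endomorphism DG algebras across a quasi-equivalence of DG categories), and this is standard once one works with h-projective representatives, so I do not expect any serious obstacle beyond bookkeeping.
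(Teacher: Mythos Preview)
Your argument is correct and is precisely what the paper has in mind: the paper states only that Corollary~\ref{fin_diff} is an immediate consequence of Proposition~\ref{obj}, and your generator-transport argument (pull $\dR_2$ back to a perfect module $\mM$ over the finite-dimensional $\dR_1'$ and apply Proposition~\ref{obj} to $\dEnd_{\dR_1'}(\mM)$) is exactly the intended unpacking. The only cosmetic point is that ``quasi-equivalent'' in the paper means a zigzag of DG quasi-equivalences rather than a single DG functor $\mF,$ but your argument goes through unchanged step by step along such a zigzag (or, equivalently, via a quasi-functor as in the paper's discussion of $\Rep(\dA,\dB)$).
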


Let us now take two derived noncommutative schemes $\dX=\prfdg\dR$ and $\dS=\prfdg\dS$ such that the DG $\kk$\!--algebras $\dR$ and $\dS$ are finite-dimensional.
We can consider a gluing $\dZ=\dX\underset{\mT}{\oright}\dY$ with respect to a DG $\dS\hy\dR$\!--bimodule $\mT$ for which   $\dim_{\kk}\bigoplus_i  H^i(\mT)<\infty .$
In this case the new derived noncommutative scheme $\dZ$ is also proper. It has the form $\prfdg(\dR\underset{\mT}{\with}\dS),$ where $\dR\underset{\mT}{\with}\dS$
is a lower triangular DG algebra from Definition \ref{upper_tr}. It is important and useful do know when the lower triangular DG algebra $\dR\underset{\mT}{\with}\dS$
is quasi-isomorphic to a finite-dimensional DG algebra.

\begin{proposition}\label{glu_fs}
Let $\dR$ and $\dS$ be two finite-dimensional DG $\kk$\!--algebras and let $\mT$ be a perfect DG $\dS\hy\dR$\!--bimodule.
Then the lower triangular DG algebra $\dR\underset{\mT}{\with}\dS$ is quasi-isomorphic to a finite-dimensional DG $\kk$\!--algebra.
\end{proposition}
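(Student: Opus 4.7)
The plan is to reduce to the case where the bimodule itself is finite-dimensional, by applying the already-established Proposition \ref{obj} to the algebra $\dS^{\op}\otimes_{\kk}\dR.$

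First I would observe that since $\dR$ and $\dS$ are finite-dimensional DG $\kk$\!--algebras, the tensor product DG algebra $\dS^{\op}\otimes_{\kk}\dR$ is again finite-dimensional. The DG $\dS\hy\dR$\!--bimodule $\mT$ is by hypothesis perfect, i.e.\ it is a perfect right DG module over $\dS^{\op}\otimes_{\kk}\dR.$ Proposition \ref{obj}, applied to this finite-dimensional DG algebra and the perfect module $\mT,$ therefore produces a finite-dimensional h-projective DG $\dS\hy\dR$\!--bimodule $\mT'$ that is homotopy equivalent to $\mT$ in the DG category of h-projective bimodules. In particular, there exists a closed degree $0$ morphism of DG bimodules $\alpha\colon\mT'\to\mT$ which is a quasi-isomorphism (even a homotopy equivalence).

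Next I would use $\alpha$ to build a quasi-isomorphism between the two lower triangular DG algebras. Recall from Definition \ref{upper_tr} that the underlying graded space of $\dR\underset{\mT}{\with}\dS$ is $\dR\oplus\mT\oplus\dS,$ with differential acting componentwise and multiplication determined by the algebra structures on $\dR,\dS$ together with the $\dS\hy\dR$\!--bimodule action on $\mT.$ Since $\alpha$ is a closed degree $0$ morphism of DG bimodules, the map
\[
\phi:=\id_{\dR}\oplus\,\alpha\oplus\id_{\dS}\colon \dR\underset{\mT'}{\with}\dS \longrightarrow \dR\underset{\mT}{\with}\dS
\]
respects differentials and multiplications, hence is a homomorphism of DG algebras. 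On the level of underlying complexes $\phi$ is a quasi-isomorphism because its components on $\dR$ and $\dS$ are identities and its component on bimodules is $\alpha,$ which is a quasi-isomorphism.

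Finally, the source $\dR\underset{\mT'}{\with}\dS$ is finite-dimensional, being the direct sum of the three finite-dimensional graded vector spaces $\dR,\mT',\dS.$ Thus $\dR\underset{\mT}{\with}\dS$ is quasi-isomorphic to a finite-dimensional DG algebra, proving the proposition.

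I do not foresee a real obstacle: the crux is identifying the hypothesis ``$\mT$ perfect as $\dS\hy\dR$\!--bimodule'' with ``$\mT$ perfect over the finite-dimensional DG algebra $\dS^{\op}\otimes_{\kk}\dR$'' so that Proposition \ref{obj} applies; the only point requiring care is verifying that the homotopy equivalence supplied by that proposition can be promoted to an honest bimodule morphism (not just a zig-zag), which is precisely the content of that proposition since it replaces $\mT$ within its homotopy equivalence class in $\prfdg(\dS^{\op}\otimes_{\kk}\dR).$
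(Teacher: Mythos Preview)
Your proposal is correct and follows exactly the approach of the paper: apply Proposition~\ref{obj} to $\mT\in\prfdg(\dS^{\op}\otimes_{\kk}\dR)$ to replace $\mT$ by a finite-dimensional bimodule, and conclude. The paper's proof is terser and leaves the construction of the explicit quasi-isomorphism $\id_{\dR}\oplus\alpha\oplus\id_{\dS}$ between the two lower triangular DG algebras implicit, so your version simply spells out this final step.
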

\begin{proof}
Consider the tensor product $\dS^{\op}\otimes_{\kk}\dR.$ It is a finite-dimensional DG $\kk$\!--algebra and, by assumption, we have $\mT\in\prfdg(\dS^{\op}\otimes_{\kk}\dR).$
Now, by Proposition \ref{obj}, the DG bimodule $\mT$ is quasi-isomorphic to a finite-dimensional DG bimodule. This implies that the lower triangular DG algebra $\dR\underset{\mT}{\with}\dS$ is quasi-isomorphic to a finite-dimensional DG algebra.
\end{proof}
\begin{proposition}\label{glu_fin}
Let $\dR$ and $\dS$ be two finite-dimensional DG $\kk$\!--algebras and let $\mT$ be a DG $\dS\hy\dR$\!--bimodule such that $\dim_{\kk}\bigoplus_i  H^i(\mT)<\infty .$
If the DG algebras $\dR$ and $\dS$ are $\kk$\!--smooth then the lower triangular DG algebra $\dR\underset{\mT}{\with}\dS$ is also smooth and it is quasi-isomorphic to a finite-dimensional DG algebra.
\end{proposition}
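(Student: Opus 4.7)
The plan is to reduce Proposition \ref{glu_fin} to the two already established results Proposition \ref{smooth_glue} and Proposition \ref{glu_fs}. Both of these take as input a \emph{perfect} DG bimodule, so the real content of the proof is to upgrade the hypothesis $\dim_{\kk}\bigoplus_i H^i(\mT)<\infty$ to perfectness of $\mT$ as a DG $\dS^{\op}\otimes_{\kk}\dR$\!--module. Once this upgrade is in hand, Proposition \ref{smooth_glue} gives smoothness of $\dR\underset{\mT}{\with}\dS$, and Proposition \ref{glu_fs} gives the quasi-isomorphism with a finite-dimensional DG algebra.

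First I would show that the enveloping DG algebra $\dE:=\dS^{\op}\otimes_{\kk}\dR$ is both smooth and proper over $\kk$. Properness is immediate: since $\dR$ and $\dS$ are finite-dimensional, so is $\dE$; in particular its cohomology is finite-dimensional. Smoothness of $\dE$ follows from the standard fact that a tensor product of two $\kk$\!--smooth DG algebras is $\kk$\!--smooth, which one sees by tensoring a perfect resolution of $\dR$ over $\dR^{\op}\otimes\dR$ with a perfect resolution of $\dS$ over $\dS^{\op}\otimes\dS$ and reshuffling to obtain a perfect resolution of $\dE$ over $\dE^{\op}\otimes\dE$.

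Next I would invoke the following well-known property of smooth and proper DG algebras: if $\dE$ is smooth and proper over $\kk$, then a DG $\dE$\!--module $M$ is perfect if and only if $\dim_{\kk}\bigoplus_i H^i(M)<\infty$. One direction is the standard fact that perfect modules over a proper DG algebra have finite-dimensional total cohomology; the converse uses smoothness via the identification of $M$ with $M\stackrel{\bL}{\otimes}_{\dE}\dE$ and the perfectness of the diagonal bimodule $\dE$, which forces $M$ to be compact as soon as its cohomology is bounded and finite-dimensional. Applying this criterion to $\dE=\dS^{\op}\otimes_{\kk}\dR$ and $M=\mT$, the hypothesis $\dim_{\kk}\bigoplus_i H^i(\mT)<\infty$ yields that $\mT\in\prfdg(\dS^{\op}\otimes_{\kk}\dR)$, i.e.\ $\mT$ is a perfect DG $\dS\hy\dR$\!--bimodule.

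Having promoted $\mT$ to a perfect bimodule, the conclusion is automatic: Proposition \ref{smooth_glue} applied to the now-perfect bimodule $\mT$ gives that the derived noncommutative scheme $\dX\underset{\mT}{\oright}\dY$, i.e.\ $\prfdg(\dR\underset{\mT}{\with}\dS)$, is smooth, hence the DG algebra $\dR\underset{\mT}{\with}\dS$ is smooth. Independently, Proposition \ref{glu_fs} applied to the perfect bimodule $\mT$ furnishes a quasi-isomorphism of $\dR\underset{\mT}{\with}\dS$ with a finite-dimensional DG algebra. The main obstacle in this approach is the smooth-proper characterization of perfect modules invoked in the second paragraph; if a clean internal reference is not available, one would need to insert a short lemma establishing it (using compact $=$ perfect together with the fact that over a smooth proper DG algebra any cohomologically finite module is compact), but no new idea beyond what is already developed in the preliminaries is required.
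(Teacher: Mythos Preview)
Your proposal is correct and follows essentially the same route as the paper: reduce to Propositions \ref{smooth_glue} and \ref{glu_fs} by first proving that $\mT$ is perfect over $\dS^{\op}\otimes_{\kk}\dR$, using smoothness and properness of this tensor product.

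The only difference is in how the key step ``cohomologically finite $\Rightarrow$ perfect'' is justified. You sketch a direct diagonal-bimodule argument (writing $M\cong M\stackrel{\bL}{\otimes}_{\dE}\dE$ with $\dE$ perfect as a bimodule), and flag it as a potential gap needing a lemma. The paper instead uses an internal reference already in place: since $\dS^{\op}\otimes_{\kk}\dR$ is smooth it is regular, so $\prf(\dS^{\op}\otimes_{\kk}\dR)$ is regular and proper, and by Theorem \ref{saturated} every cohomological functor to finite-dimensional vector spaces is representable; applying this to $\Hom(-,\mT)$ gives perfectness of $\mT$. This avoids the extra lemma you worried about. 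Your diagonal argument is also fine and arguably more direct, but the paper's appeal to saturatedness keeps everything self-contained within the preliminaries.
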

\begin{proof}
If the DG algebras $\dR$ and $\dS$ are $\kk$\!--smooth then the tensor product $\dS^{\op}\otimes_{\kk}\dR$ is also $\kk$\!--smooth. Therefore, the triangulated category
$\prf(\dS^{\op}\otimes_{\kk}\dR)$ is proper and regular. The object $\mT$ gives a cohomological functor $\Hom(-, \mT)$ to the category of
finite-dimensional vector spaces. By Theorem \ref{saturated}, it is representable. Hence, $\mT$ is quasi-isomorphic to a perfect DG bimodule.
By Proposition \ref{smooth_glue} the gluing $\prfdg\dR\underset{\mT}{\oright}\prfdg\dS$ is smooth and, hence, the DG algebra $\dR\underset{\mT}{\with}\dS$ is also smooth.
By Proposition \ref{glu_fs}, we obtain  that the lower triangular DG algebra $\dR\underset{\mT}{\with}\dS$ is quasi-isomorphic to a finite-dimensional DG algebra.
\end{proof}
\begin{corollary}\label{sep_coll}
Let $\dR$ be a DG $\kk$\!--algebra. Assume that the triangulated category $\prf\dR$ is proper and it has a full separable semi-exceptional collection.
Then the DG algebra $\dR$ is $\kk$\!--smooth and it is quasi-isomorphic to a finite-dimensional DG $\kk$\!--algebra.
\end{corollary}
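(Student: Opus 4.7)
The plan is to induct on the length $n$ of the full separable semi-exceptional collection $(E_1,\dots,E_n)$ in $\prf\dR$, peeling off the last piece at each step via Proposition \ref{gluing_semi-orthogonal} and then invoking Proposition \ref{glu_fin} to combine smoothness and finite-dimensionality across the gluing.

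For the base case $n=1$, the idea is that $\prf\dR\simeq\prf S_1$, where $S_1=\End(E_1)$ is by hypothesis a separable finite-dimensional semisimple $\kk$\!--algebra concentrated in degree zero. Separability means that the enveloping algebra $S_1\otimes_{\kk}S_1^{\op}$ is itself semisimple, so $S_1$ is projective, hence perfect, as a module over this enveloping algebra; that is, $S_1$ is $\kk$\!--smooth, and it is tautologically finite-dimensional. I would then transport these two properties to $\dR$ itself using Corollary \ref{fin_diff} and the Morita invariance of smoothness recalled after the definition of $\kk$\!--smoothness.

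For the inductive step, set $\A=\langle E_1,\dots,E_{n-1}\rangle$ and $\B=\langle E_n\rangle$, which give a semi-orthogonal decomposition $\prf\dR=\langle\A,\B\rangle$. Proposition \ref{gluing_semi-orthogonal} (together with the remark immediately after it) supplies a quasi-equivalence
\[
\prfdg\dR\;\simeq\;\prfdg\bigl(\dR'\underset{\mT}{\with}S_n\bigr),
\]
where $\dR'$ is a DG algebra presenting $\A$, $S_n=\End(E_n)$ is as in the base case, and $\mT$ is assembled from the internal Hom complexes between the two pieces as in formula~\eqref{bimodule}. The subcategory $\prf\dR'$ is proper since it inherits finiteness of its Hom cohomology from the ambient proper category $\prf\dR$, and $(E_1,\dots,E_{n-1})$ remains a full separable semi-exceptional collection for $\prf\dR'$. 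The induction hypothesis therefore gives that $\dR'$ is $\kk$\!--smooth and quasi-isomorphic to a finite-dimensional DG algebra, and the base case gives the same for $S_n$. Properness of $\prf\dR$ also forces $\dim_{\kk}\bigoplus_i H^i(\mT)<\infty$, because each component of $\mT$ is a Hom complex between perfect objects of a proper triangulated category. All the hypotheses of Proposition \ref{glu_fin} are then in place, so $\dR'\underset{\mT}{\with}S_n$ is $\kk$\!--smooth and quasi-isomorphic to a finite-dimensional DG algebra; by Corollary \ref{fin_diff} and Morita invariance of smoothness the same conclusions transfer to $\dR$.

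The main obstacle I anticipate is not a computation but a bookkeeping one: making sure that Proposition \ref{gluing_semi-orthogonal} is applicable in exactly the form I use, so that \emph{both} factors of the gluing are of the form $\prfdg(\cdot)$ and the induction hypothesis can be applied to the left factor; this is precisely what the remark following Proposition \ref{gluing_semi-orthogonal} records. A secondary and purely cosmetic point is the persistence of the adjective \emph{separable semi-exceptional} along the induction, which is immediate since the relevant data $\End(E_i)=S_i$ lives inside $\A\subset\prf\dR$ and is unaffected by passage to the subcategory.
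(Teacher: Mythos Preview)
Your proposal is correct and follows essentially the same approach as the paper: induction on the length of the collection, peeling off the last member via Proposition \ref{gluing_semi-orthogonal}, and invoking Proposition \ref{glu_fin} together with Corollary \ref{fin_diff} and Morita invariance of smoothness. The only cosmetic difference is that the paper first passes to the endomorphism DG algebra $\dE=\dEnd_{\prfdg\dR}(\bigoplus_i\mE_i)$ and runs the induction there, whereas you work directly with abstract presenting DG algebras at each stage; one small point to make explicit in your write-up is that Proposition \ref{glu_fin} requires the two input DG algebras to be genuinely finite-dimensional, so after the induction hypothesis yields that $\dR'$ is quasi-isomorphic to a finite-dimensional DG algebra you should replace it (and transport $\mT$) before applying that proposition, exactly as the paper does when it says ``we can assume that the DG algebras $\dE'$ and $\dE_k$ are finite-dimensional.''
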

\begin{proof}
Consider the objects $\mE_1,\ldots, \mE_k$ in the DG category $\prfdg\dR$ which form a full separable semi-exceptional collection
in the triangulated category $\prfdg\dR.$ Denote by $\dE$ the DG  emdomorphism algebra $\dEnd_{\prfdg\dR}(\mE),$ where
$\mE=\bigoplus_{i=1}^{k}\mE_i.$ The object $\mE$ is a classical generator for $\prf\dR.$ Therefore
the DG category $\prfdg\dR$ is quasi-equivalent to $\prfdg\dE$ (see, e.g., \cite{Ke, Ke2}, \cite[Prop. 1.15]{LO}).
Let us prove induction by $k$ that the DG algebra $\dE$ is smooth and is quasi-isomorphic to a finite-dimensional DG $\kk$\!--algebra.
The base of induction is a separable semisimple finite-dimensional $\kk$\!--algebra that is smooth.
By Proposition \ref{gluing_semi-orthogonal} the DG category $\prfdg\dE$ is  quasi-equivalent to a gluing $\prfdg\dE'\underset{\mT}{\oright}\prfdg\dE_k,$ where
$\dE'=\dEnd_{\prfdg\dR}(\bigoplus_{i=1}^{k-1}\mE_i)$ and $\dE_k=\dEnd_{\prfdg\dR}(\mE_k).$ By induction hypothesis, we can assume that
the DG algebras $\dE'$ and $\dE_k$ are finite-dimensional and smooth. Since $\prf\dR$ is proper, the DG $\dE_k\hy\dE'$\!--bimodule
$\mT$ is cohomologicaly finitely-dimensional, i.e.  $\dim_{\kk}\bigoplus_i  H^i(\mT)<\infty .$ Now by Proposition \ref{glu_fin}
the DG algebra $\dE$ is smooth and it is quasi-isomorphic to a finite-dimensional DG algebra.
By Corollary \ref{fin_diff} the DG algebra is also quasi-isomorphic to a finite-dimensional DG algebra. It is also smooth, because
the smoothness is a property of the DG category $\prfdg\dR\cong\prfdg\dE.$
\end{proof}
\begin{remark}
{\rm
Note that there is an example of a gluing of non-smooth finite-dimensional DG algebras via a cohomologically finite DG bimodule such that
the resulting lower triangular DG algebra is not quasi-isomorphic to a finite-dimensional DG algebra. Such example can be found in Alexander Efimov's paper \cite{Ef}
in Theorem 5.4. It is proved that the DG algebra constructed there (actually $A_{\infty}$\!--algebra) does not have a categorical resolution of singularities, while any finite-dimensional DG algebra has such a resolution
as it will be shown in Theorem \ref{DGalgebra}.
}
\end{remark}

\subsection{Semisimple finite-dimensional DG algebras}
In this section we introduce and compare two  different notions of simplicity and semisimplicity
for finite-dimensional DG algebras.
Let $\dS=(\gS, \ds)$ be a finite-dimensional DG algebra over a base field $\kk.$
\begin{definition}\label{simple}
A finite-dimensional DG algebra $\dS$ is called {\sf simple},
if the DG category $\prfdg\dS$ of perfect DG modules is quasi-equivalent to $\prfdg D,$ where $D$ is a finite-dimensional division
$\kk$\!--algebra. It is called {\sf semisimple}, if  the DG category $\prfdg\dS$ is quasi-equivalent to a gluing $\prfdg D_1\oplus\cdots\oplus \prfdg D_m,$ where all $D_i$ are finite-dimensional division algebras over $\kk.$
In addition, $\prfdg\dS$ is called {\sf separable}, if all division algebras $D_i$ are separable over $\kk.$
\end{definition}
\begin{remark}{\rm
Recall that a division $\kk$\!--algebra $D$  is called {\sf separable} over $\kk$ if it is a projective as
$D^{\op}\otimes_{\kk} D$\!--module. It is well-known that a division algebra $D$ is separable iff
its center is a separable extension of $\kk.$
}
\end{remark}

It follows directly from the definition that the notions of simple and semisimple DG algebras are invariant with respect to quasi-isomorphisms.
\begin{remark}\label{semi-ex}
{\rm
Note that the DG category of perfect modules $\prfdg\dS$ for a semisimple DG algebra $\dS$ is generated by a single semi-exceptional object.
Indeed, the object $D_1\oplus\cdots\oplus D_m$ is semi-exceptional and classically generates the whole category $\prf\dS.$
}
\end{remark}

Now we consider another notion of (semi)simplicity.

\begin{definition}\label{abstrsimple}
A finite-dimensional DG algebra $\dS=(S, \ds)$ is called {\sf abstractly simple} (or {\sf abstractly semisimple}), if
the underlying ungraded algebra $\un{S}$ is simple (or semisimple).
\end{definition}
These notions are not invariant with respect to quasi-isomorphisms of DG algebras.
It is easy to see that any simple (or semisimple) DG algebra is quasi-isomorphic to abstractly simple (or semisimple) DG algebra.
Indeed, for any complex $\mN\in\prfdg D$ the DG algebra of endomorphisms $\dEnd_{D}(\mN)$ is abstractly simple.
If now the DG category $\prfdg\dS$ is quasi-equivalent to $\prfdg D,$ then the DG algebra $\dS$ is quasi-isomorphic to
$\dEnd_{D}(\mN)$ for some complex $\mN\in\prfdg D.$

In fact, we can also show that any abstractly simple algebra has the form $\dEnd_{D}(\mN)$ for some complex $\mN\in\prfdg D$ and, hence, it is simple.

\begin{proposition}\label{eqsimple}
Let $\dS=(\gS, \ds)$ be an abstractly simple DG algebra over a field $\kk.$
Then it is simple and it is isomorphic to $\dEnd_{D}(\mN)$ for some bounded complex $\mN$ of finite (right) $D$\!--modules.
\end{proposition}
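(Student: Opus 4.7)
The plan is to combine a graded Wedderburn-type argument with the vanishing of Hochschild cohomology for central simple algebras. The key observation is that any $\ZZ$-graded finite-dimensional division $\kk$-algebra is forced to be concentrated in degree zero, so the graded Schur lemma reduces to its ungraded version and the differential $\ds$ can then be absorbed into an inner supercommutator. More precisely, I take a graded right $\gS$-module $\gN$ of minimal nonzero graded length (which exists because $\gS$ is finite-dimensional), which is necessarily graded simple. Graded Schur makes $\End^{gr}_{\un{S}}(\gN)=\bigoplus_{p}\Hom_{\un{S}}(\gN,\gN)^{p}$ a graded division ring; its finite-dimensionality over $\kk$ rules out nonzero homogeneous elements of degree $p\neq 0$ (whose iterated powers would live in infinitely many distinct degrees), so $\End^{gr}_{\un{S}}(\gN)=D$ sits in degree $0$ and is an ordinary finite-dimensional division $\kk$-algebra. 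Consequently the ungraded $\End_{\un{S}}(\gN)=D$ is itself a division algebra; writing $\gN\cong\un{N}^{\oplus m}$ ungraded makes it a matrix algebra of rank $m$ over $\un{D}$, which is a division algebra only when $m=1$, so $D\cong\un{D}$. Ungraded Wedderburn gives $\un{S}\cong\End_{D}(\gN)$ by right multiplication, and this promotes to a graded isomorphism $\gS\cong\End^{gr}_{D}(\gN)$ since right multiplication by a degree-$p$ element of $\gS$ is a $D$-linear map of degree $p$.

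Next I transport $\ds$ through this isomorphism. Because $\un{S}$ is simple, $\gS$ is graded simple, so its graded center is a graded division ring, hence by the same finiteness argument concentrated in degree $0$ where it coincides with the ordinary center $F:=Z(\un{S})$. Graded derivations preserve the graded center (from the graded Leibniz rule), so $\ds(F)$ lies in the degree-$1$ part of the graded center -- which is zero -- and hence $\ds$ is $F$-linear. Since $\gS\cong\End_{D}(\gN)$ is central simple, hence separable, over $F$, it is projective over its enveloping algebra $\gS\otimes_{F}\gS^{op}$, and graded $F$-linear Hochschild cohomology vanishes in positive homological degree. In particular, every $F$-linear graded derivation of degree $1$ is inner, so there exists $t\in\gS^{1}$ with $\ds(a)=ta-(-1)^{|a|}at$ for all homogeneous $a$.

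The condition $\ds^{2}=0$ unwinds to $[t^{2},a]=0$ for all $a\in\gS$ (an ordinary commutator, as $t^{2}$ has even degree), putting $t^{2}$ in the ungraded center $F$; but $t^{2}$ has degree $2$ and $F$ sits in degree $0$, so $t^{2}=0$. Viewing $t$ as a degree-$1$ $D$-linear endomorphism of $\gN$ with $t^{2}=0$, we obtain a bounded complex $\mN:=(\gN,t)$ of finite right $D$-modules, and the differential on $\dEnd_{D}(\mN)$ transported through $\gS\cong\End^{gr}_{D}(\gN)$ is precisely $\ds$. Hence $\dS\cong\dEnd_{D}(\mN)$ as DG $\kk$-algebras, and since $\dS$ is not acyclic by the standing assumption, neither is $\mN$; so $\mN$ classically generates $\prfdg D$ and yields a quasi-equivalence $\prfdg\dS\simeq\prfdg D$, showing that $\dS$ is simple in the sense of Definition~\ref{simple}.

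The main obstacle is the graded Wedderburn step, specifically using graded Schur together with the finite-dimensionality of $\gN$ over $\kk$ to force both the grading on the endomorphism division algebra and the ungraded multiplicity of $\gN$ to be trivial. Once that input is in place, the differential analysis is a routine combination of graded-center bookkeeping, Hochschild cohomology vanishing for central simple algebras, and a degree count.
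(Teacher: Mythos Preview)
Your proof is correct and shares the paper's overall architecture---graded Wedderburn $\gS\cong\End^{gr}_{D}(\gN)$, then Hochschild vanishing to make $\ds$ a graded inner derivation $[t,-]$, then the degree count forcing $t^{2}=0$---but the first step is carried out by a genuinely different and more elementary method. The paper interprets the grading as a cocharacter $\Gm\to\Aut_{\ff}(\un{S})$ (where $\ff=Z(\un{S})$) and lifts it through the Skolem--Noether sequence $1\to\Gm\to{\mathbf G}{\mathbf L}_{1,\ff}(\un{S})\to\Aut_{\ff}(\un{S})\to 1$ using $\Ext^{1}(\Gm,\Gm)=0$; this equips the standard ungraded Wedderburn module with a compatible grading and, as a bonus, gives an explicit formula for the inner element $d$ in terms of the lifted one-parameter subgroup $\rho$. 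You instead pick a graded simple $\gS$-module ab initio, apply the graded Schur lemma, and let finite-dimensionality force both the endomorphism graded division ring into degree~$0$ and the ungraded multiplicity to equal~$1$---a hands-on graded Artin--Wedderburn argument that avoids group schemes entirely. For the Hochschild step you also interpose an explicit reduction to $F$-linearity of $\ds$ before invoking separability over~$F$, whereas the paper appeals directly to Morita invariance $HH^{*}(\dS')\cong HH^{*}(D)$; both are valid. One small terminological caveat: the sentence ``graded derivations preserve the graded center'' is literally right only if ``graded center'' means the super-center (elements super-commuting with every homogeneous element); if one means the ordinary center with its induced grading, then $\ds(z)$ is only super-central. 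Either interpretation has trivial degree-$1$ part in a finite-dimensional graded simple algebra, so your conclusion $\ds|_{F}=0$ is unaffected.
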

\begin{proof}
Let $\dS=(\gS, \ds)$ be an abstractly simple DG algebra over a field $\kk.$ The ungraded algebra $\un{S}$ is simple and, hence, it is isomorphic to
a matrix algebra ${\mathbf M}_n(D)$ over a division algebra $D.$ Thus, $\un{S}=\End_{D}(\un{N}),$ where $\un{N}=D^{\oplus n}$ is a free right $D$\!--module of rank $n.$
Denote by $\ff\subset D$ the center of the division algebra $D.$ The field $\ff$ is a finite extension of $\kk.$

The  $\ZZ$\!--grading on the $\kk$\!--algebra  $S$ is given by a homomorphism of the group schemes
${\mathbf G}_{m,\kk}\to \Aut_{\kk}(\un{S}),$ where $\Aut_{\kk}(\un{S})$ the automorphism group scheme of the ungraded $\kk$\!--algebra $\un{S}.$
The center of the algebra $\un{S}$ is the field $\ff,$ which is the center of $D.$
Any $\ZZ$\!--grading on an algebra induces a $\ZZ$\!--grading on the center of this algebra. In our case,
the center is the field $\ff,$ which is finite extension of $\kk.$ This implies that  any $\ZZ$\!--grading on $\kk$\!-algebra $\ff$  is trivial (see Remark \ref{remuse}). Thus, the grading on $S$ is also a grading on this algebra considered  as $\ff$\!--algebra.
It is given by a homomorphism ${\mathbf G}_{m,\ff}\to \Aut_{\ff}(\un{S}),$ where $\Aut_{\ff}(\un{S})$ the group scheme over $\ff$ of automorphisms of the ungraded $\ff$\!--algebra $\un{S}.$

Let us consider the group scheme homomorphism $\Int:
{\mathbf G}{\mathbf L}_{1, \ff}(\un{S})\to \Aut_{\ff}(\un{S}),$
where ${\mathbf G}{\mathbf L}_{1, \ff}(\un{S})$ is the group scheme of units of the $\ff$\!-algebra $\un{S}$
and $\Int(s)$ the inner automorphism of $\un{S}$ induced by $s.$ Since $\un{S}$ is simple, by Skolem--Noether theorem, the homomorphism
$\Int$ is surjective and there is the following exact sequence of $\ff$\!--group schemes.
\[
1\lto {\mathbf G}_{m, \ff}\lto {\mathbf G}{\mathbf L}_{1, \ff}(\un{S})\lto \Aut_{\ff}(\un{S})\lto 1
\]
It is well-known that $\Ext^1({\mathbf G}_{m, \ff}, {\mathbf G}_{m, \ff})=0.$ Hence the homomorphism
${\mathbf G}_{m,\ff}\to \Aut_{\ff}(\un{S})$ can be lifted to a homomorphism
$\rho: {\mathbf G}_{m,\ff}\to {\mathbf G}{\mathbf L}_{1, \ff}(\un{S}).$ Since the $\ff$\!--group scheme ${\mathbf G}{\mathbf L}_{1, \ff}(\un{S})$ is isomorphic to the $\ff$\!--group scheme $\Aut_{D}(\un{N}),$ we obtain a $\ZZ$\!--grading on the
$D$\!--module $\un{N}.$ Moreover, this grading on $D$\!--module $\un{N}$ induces the grading on $S.$ This means that  the graded algebra $S$ is isomorphic to the algebra $\End^{gr}_D(N)$ of graded endomorphisms,
where $N$ is the correspondent graded $D$\!--module.

Denote by $\dS'=(S, 0)$ and $\mN'=(N, 0)$ the DG algebra and the DG $D$\!--module with zero differentials. There is an isomorphism of between these DG algebras  $\dS'\cong \dEnd_{D}(\mN').$
Hence, DG algebra $\dS'$ is Morita equivalent to $D,$ i.e. the DG category $\prfdg\dS'$ is quasi-equivalent to the DG category $\prfdg D.$
The differential $\ds$ on the DG algebra $\dS$ gives a Hochschild 2-cochain for the DG algebra $\dS'$ that is actually a 2-cocycle, because the Leibnitz rule holds.
On the other hand, we know that the second Hochschild cohomology $HH^2(\dS')\cong HH^2(\prfdg\dS')\cong HH^2(D)$ are trivial.
Hence, any 2-cocycle is a 2-coboundary. Taking in account that the differential of $\dS'$ is trivial we obtain that there is
an element $d\in S^1$ such that
\[
\ds(s)=[d, s]:=d s-(-1)^q sd,\quad\text{for any}\quad s\in S^q.
\]
The property $\ds^2=0$ implies that the element $d^2$ belong to the center of the algebra $S.$ But the center of $S,$ which is the field $\ff,$ is in the degree $0.$
Therefore, we have $d^2=0.$ The element $d\in S^1=\End_{D}(N)^1$ defines a differential $d: N\to N.$ Denote by $\mN=(N, d)$ the complex of $D$\!--modules.
Now by construction of $\mN$ we see that the DG algebra $\dEnd_{D}(\mN)$ is isomorphic to $\dS.$
Since, by assumption, the DG algebra $\dS=\dEnd_{D}(\mN)$ is not quasi-isomorphic to the zero algebra, the  complex $\mN$ is not acyclic. Therefore, the DG algebra $\dS$ is Morita equivalent to the division algebra $D,$ i.e. $\prfdg\dS$ is quasi-equivalent to $\prfdg D.$ Thus, the DG algebra $\dS$ is simple.

We also can give a precise formula for the element $d\in S^1$ (if $\ff$ is not the field of two elements). Consider again the homomorphism $\rho: {\mathbf G}_{m,\ff}\to {\mathbf G}{\mathbf L}_{1, \ff}(\un{S})$ and denote by $\rho_{\lambda}\in S$ the image of an element $\lambda\in\ff^*$ under the map $\rho$ on $\ff$\!--points.
It is evident that $\rho_{\lambda}\in S^0$ and $\rho_{\lambda}s\rho_{\lambda}^{-1}=\lambda^q s$ for any $s\in S^q.$
Assume that $\lambda\ne 1$ and consider the following element
\[
d=\frac{\lambda}{1- \lambda}\rho_{\lambda}^{-1} \ds (\rho_{\lambda})\in S^1.
\]
Now taking in account the relation $\ds (\rho_{\lambda}^{-1})\rho_{\lambda}+\rho_{\lambda}^{-1} \ds (\rho_{\lambda})=0,$ we obtain the following sequence of equalities
\begin{multline*}
(\lambda^{-1} - 1)\ds (s)=
\rho_{\lambda}^{-1}\ds(\lambda^q s)\rho_{\lambda} - \ds (s)=
\rho_{\lambda}^{-1}(\ds(\rho_{\lambda}s\rho_{\lambda}^{-1})- \rho_{\lambda} \ds(s)\rho_{\lambda}^{-1})\rho_{\lambda}=\\
\rho_{\lambda}^{-1}\ds (\rho_{\lambda}) s + (-1)^q s \ds (\rho_{\lambda}^{-1})\rho_{\lambda} =
\rho_{\lambda}^{-1} \ds (\rho_{\lambda}) s - (-1)^q s \rho_{\lambda}^{-1} \ds (\rho_{\lambda})=
[\rho_{\lambda}^{-1} \ds (\rho_{\lambda}), s].
\end{multline*}
This shows that $[d,s]=\ds (s)$ for any element $s\in S^q.$
\end{proof}

This result can be easily extended to abstractly  semisimple DG algebras.
\begin{proposition}\label{eqsemisimple}
Let $\dS=(\gS, \ds)$ be an abstractly semisimple DG algebra over a field $\kk.$ Then it is semisimple and it is isomorphic to a product  $\dS_1\times\cdots\times \dS_n,$ where
all $\dS_i, 1\le i\le n,$ are abstractly simple DG $\kk$\!--algebras.
\end{proposition}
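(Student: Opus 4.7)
The plan is to lift the Wedderburn decomposition of the underlying ungraded algebra $\un{S}$ to the DG algebra $\dS$ itself, by showing that the primitive central idempotents of $\un{S}$ are automatically homogeneous of degree $0$ and are cocycles for $\ds$. Once this is established, the proposition reduces to applying Proposition \ref{eqsimple} to each simple factor.

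First I would study the center $Z(\un{S})$. Since $\un{S}$ is semisimple, Wedderburn gives $\un{S}\cong \prod_{i=1}^{n}\mathbf{M}_{n_i}(D_i)$ with $D_i$ finite-dimensional division $\kk$\!--algebras, hence $Z(\un{S})\cong \ff_1\times\cdots\times\ff_n,$ where $\ff_i$ is the center of $D_i,$ a finite field extension of $\kk.$ The $\ZZ$\!--grading on $S$ is given by a homomorphism $\mathbf{G}_{m,\kk}\to\Aut_{\kk}(\un{S}),$ and since any algebra automorphism preserves the center, this action restricts to a $\ZZ$\!--grading on $Z(\un{S}).$ By Remark \ref{remuse}, every $\ZZ$\!--grading on a product of finite-dimensional division $\kk$\!--algebras is trivial, so $Z(\un{S})\subset S^0.$ In particular the primitive central idempotents $e_1,\ldots,e_n\in Z(\un{S})$ (those corresponding to the simple factors) lie in $S^0.$

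Next I would show that $\ds(e_i)=0$ for each $i.$ Since $e_i$ is central of degree $0,$ applying $\ds$ to the identity $e_i s=s e_i$ for arbitrary $s\in S^q$ and using the graded Leibniz rule gives $\ds(e_i)s = (-1)^q s\,\ds(e_i),$ i.e. $\ds(e_i)$ lies in the graded center in degree $1.$ In particular $\ds(e_i)$ graded-commutes with $e_i,$ so $e_i\,\ds(e_i)=\ds(e_i)\,e_i.$ Applying $\ds$ to $e_i^2=e_i$ then yields
\[
\ds(e_i)=\ds(e_i)e_i+e_i\ds(e_i)=2\,e_i\ds(e_i).
\]
Multiplying on the left by $e_i$ and using $e_i^2=e_i$ gives $e_i\ds(e_i)=2e_i\ds(e_i),$ hence $e_i\ds(e_i)=0,$ and substituting back shows $\ds(e_i)=0.$ Thus $e_1,\ldots,e_n$ is a complete orthogonal system of central idempotents of $\dS$ sitting in degree $0$ and killed by the differential, so $\dS$ splits as a product of DG algebras $\dS=\dS_1\times\cdots\times\dS_n$ with $\dS_i=e_i\dS e_i.$ Each $\dS_i$ has underlying ungraded algebra $e_i\un{S}e_i\cong\mathbf{M}_{n_i}(D_i),$ which is simple, so each $\dS_i$ is abstractly simple.

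Finally, by Proposition \ref{eqsimple} each $\dS_i$ is simple in the sense of Definition \ref{simple}, that is $\prfdg\dS_i$ is quasi-equivalent to $\prfdg D_i.$ Since $\prfdg\dS\simeq \prfdg\dS_1\oplus\cdots\oplus\prfdg\dS_n,$ we conclude that $\prfdg\dS\simeq\prfdg D_1\oplus\cdots\oplus\prfdg D_n,$ so $\dS$ is semisimple in the DG sense. The main obstacle in this plan is the verification that the central idempotents are $\ds$\!--closed; everything else is a direct consequence of Wedderburn, Remark \ref{remuse}, and Proposition \ref{eqsimple}. The key point is that being a homogeneous degree-$0$ central idempotent forces the derivation output to vanish via the above short calculation.
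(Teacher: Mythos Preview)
Your proposal is correct and follows essentially the same route as the paper: both arguments show the primitive central idempotents $e_i$ lie in degree $0$ via Remark \ref{remuse}, then use the Leibniz computation $\ds(e_i)=\ds(e_i^2)=2e_i\ds(e_i)$ to force $\ds(e_i)=0$, split $\dS$ as a product of abstractly simple DG algebras, and invoke Proposition \ref{eqsimple} on each factor. The one point the paper records that you omit is that some factors $\dS_i$ may be acyclic (so the standing non-acyclicity hypothesis of Proposition \ref{eqsimple} fails for them), but then $\prfdg\dS_i$ is trivial and simply drops out of the decomposition $\prfdg\dS\simeq\bigoplus_i\prfdg D_i$.
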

\begin{proof} Consider the decomposition $\un{S}=\un{S}_1\times \cdots\times \un{S}_n$ of the semisimple algebra $\un{S}$ as a direct product of simple algebras.
Let $e_1,\ldots, e_n$ be the complete set of central orthogonal idempotents such that $\un{S}_i=e_i \un{S}=\un{S}e_i=e_i\un{S}e_i.$
By Remark \ref{remuse} all central idempotents are homogeneous elements of degree $0.$ Therefore, all algebras $\un{S}_i$ are graded and
$S=S_1\times \cdots\times S_n$ as the graded algebra.
Now we have
\[
\ds (e_i)= \ds(e_i^2)=\ds (e_i) e_i +e_i \ds( e_i)=2 e_i \ds (e_i).
\]
Thus, we obtain that $\ds (e_i)\in S_i$ and, hence, $e_i\ds (e_i)=\ds(e_i).$ Furthermore, we have $e_i \ds (e_i)= 2 e_i \ds (e_i).$ This implies that $\ds (e_i)=0.$
Therefore, the differential $\ds$ sends any $S_i$ to itself and $\ds (e_i)=0$ for any $i.$ Hence,  the DG algebra $\dS$ is isomorphic to a product $\dS_1\times\cdots\times \dS_n.$
All DG algebras $\dS_i$ are abstractly simple and, by Proposition \ref{eqsimple}, they are also simple, when they are not acyclic.
This implies that $\dS$ is semisimple. We should to note that some DG algebras $\dS_i$ can be acyclic and in this case  $e_i=\ds (a_i)$ for some element $a_i.$
In such situation the DG category $\prfdg \dS_i$ is trivial.
\end{proof}
\begin{corollary}\label{intsimple}
Let $\dS=(\gS, \ds)$ be a finite-dimensional DG algebra over a base field $\kk.$ Assume that the internal radical $\rdi$ is trivial.
Then the DG algebra $\dS$ is semisimple.
\end{corollary}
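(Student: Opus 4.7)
The plan is to reduce to the already-established Proposition \ref{eqsemisimple} via the quasi-isomorphism between $\dS_-$ and $\dS_+$ supplied by Lemma \ref{quasi-is}.

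First I would observe that the hypothesis $\rdi=0$ gives $\dS_- = \dS/\rdi = \dS$, so the natural map $\dS \to \dS_+$ coming from Lemma \ref{quasi-is} is a quasi-isomorphism. In particular, $\dS$ and $\dS_+$ have quasi-equivalent DG categories of perfect modules, so it suffices to show that $\dS_+$ is semisimple in the sense of Definition \ref{simple}.

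Next I would verify that $\dS_+$ is \emph{abstractly} semisimple. Indeed, $\dS_+ = \dR/\rde$ and by definition $\rde = \rd + \ds(\rd) \supseteq \rd$, so the underlying ungraded algebra $\un{S}_+$ is a quotient of $\un{S}/\rd$. The latter is semisimple by the very definition of the Jacobson radical, and any quotient of a semisimple algebra is semisimple. Hence $\un{S}_+$ is semisimple, i.e.\ $\dS_+$ is abstractly semisimple in the sense of Definition \ref{abstrsimple}.

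Finally I would invoke Proposition \ref{eqsemisimple}, which says that any abstractly semisimple finite-dimensional DG algebra is in fact semisimple. Applied to $\dS_+$, this gives that $\prfdg\dS_+$ is quasi-equivalent to a sum $\prfdg D_1\oplus\cdots\oplus\prfdg D_m$ of DG categories of perfect modules over finite-dimensional division $\kk$-algebras. Combining this with the quasi-isomorphism $\dS\xrightarrow{\sim}\dS_+$ (which yields a quasi-equivalence $\prfdg\dS\simeq\prfdg\dS_+$, as noted by the author after Definition \ref{abstrsimple}), we conclude that $\dS$ itself is semisimple. There is no real obstacle here beyond correctly chaining Lemma \ref{quasi-is} with Proposition \ref{eqsemisimple}; the only thing to check with a moment of care is the containment $\rde\supseteq\rd$, which is immediate from the definition of the external DG ideal.
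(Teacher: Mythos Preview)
Your proof is correct and follows essentially the same route as the paper: use Lemma~\ref{quasi-is} (with $\rdi=0$ giving $\dS_-=\dS$) to replace $\dS$ by the quotient $\dS_+=\dS/\rde$, observe that the latter is abstractly semisimple as a quotient of $S/J$, and then invoke Proposition~\ref{eqsemisimple}. One trivial slip: you wrote $\dS_+=\dR/\rde$ where you meant $\dS_+=\dS/\rde$, since the ambient DG algebra in this corollary is $\dS$, not $\dR$.
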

\begin{proof} If $\rdi$ is trivial, then by Lemma \ref{quasi-is} the canonical morphism $\dS\to \dS/\rde$ is a quasi-isomorphism.
The DG algebra $\dS/\rde$ is abstractly semisimple, because the underlying ungraded algebra is a quotient of the semisimple algebra $S/J,$ where $J$ is the Jacobson radical of $S.$
By Proposition \ref{eqsemisimple}, the DG algebra $ \dS/\rde$ is semisimple. Hence, $\dS$ is also semisimple as a DG algebra that is quasi-isomorphic to a semisimple DG algebra.
\end{proof}

\subsection{Auslander construction for DG algebras}\label{ausl}

Let $\dR=(\gR, \dr)$ be a finite-dimensional DG algebra over a base field $\kk$ and let $\rd\subset\gR$ be the Jacobson radical of the algebra $\gR.$ As it was explained above, $\rd$ is a graded ideal and for any graded ideal we can define internal and external DG ideals (see Definition \ref{intext}). In particular, we have defined internal and external DG radicals $\rdi$ and $\rde.$

Consider the graded ideals $J\supset J^2\supset\cdots\supset J^n=0,$ where $n$ is the index of nilpotency of $\gR.$
Denote by $\mJ_p$ the internal DG ideals $((\rd^p)_{-}, \dr)$ for $p=1,\ldots, n.$
Thus, we obtain a chain of DG ideals  $\rdi=\mJ_{1}\supseteq\cdots\supseteq\mJ_n=0.$
Let us consider the following (right) DG modules $\mM_p=\dR/\mJ_p$ with $p=1,\ldots, n.$
In particular, we have $\mM_1\cong \dS_{-}$ and $\mM_n\cong\dR$ as the right DG $\dR$\!--modules.

Consider
the DG category $\Mod \dR$ of all right DG $\dR$\!--modules.
Denote by
$\dE=(\gE, d_{\dE})$  the DG algebra of endomorphisms $\dEnd_{\dR}(\mM)$ of the DG $\dR$\!--module $\mM=\bigoplus_{p=1}^{n} \mM_p$ in the DG category
$\Mod \dR.$ In particular, the graded algebra $\gE$ is the algebra of endomorphisms $\End^{gr}_{\gR}(M)$ as in \eqref{grHom} and the differential $d_{\dE}$ is defined by the formula
\eqref{DHom}.
Denote by $\mP_s$ the right DG $\dE$\!--modules $\dHom_{\dR}(\mM, \mM_s).$
All of them are h-projective DG $\dE$\!--modules and $\dE\cong\bigoplus_{s=1}^n \mP_s$ as the right DG $\dE$\!--module.
There are isomorphisms of complexes of vector spaces
\begin{align}\label{homall}
\dHom_{\dE}(\mP_i, \mP_j)\cong \dHom_{\dR}(\mM_i, \mM_j)=\dHom_{\dR}(\dR/\mJ_i, \dR/\mJ_j)
\quad\text{for all}
\quad
1\le i, j\le n.
\end{align}
Moreover, in the case when $ i\ge j$ we have isomorphisms
\begin{align}\label{homP}
\dHom_{\dE}(\mP_i, \mP_j)\cong\dHom_{\dR}(\dR/\mJ_i, \dR/\mJ_j)\cong \dR/\mJ_j.
\end{align}
Thus, for $i\ge j$ the canonical quotient morphisms $\dR/\mJ_i\to \dR/\mJ_j$  induce natural morphisms
$\phi_{i,j}: \mP_i\to \mP_j$ of the right DG $\dE$\!-modules.

Now we introduce into consideration a new sequence of DG $\dE$\!--modules $\mK_1,\ldots, \mK_n,$ where $\mK_1\cong\mP_1$ and
for any $1<i\le n$ we denote by
$\mK_i$ the cone $\Cone(\phi_{i, i-1})[-1]$ of the morphism $\phi_{i, i-1}$ shifted by $[-1].$
Thus, the DG $\dE$\!--module $\mK_i$ is isomorphic to the sum $\mP_{i-1}[-1]\oplus\mP_{i}$ as the graded $\gE$\!--module but with a differential of the form
\[
d_{\mK_i}=
\begin{pmatrix}
-d_{\mP_{i-1}}& -\phi_{i, i-1}\\
0 & d_{\mP_i}
\end{pmatrix}.
\]

A passing from a finite-dimensional DG algebra $\dR$ to the DG algebra $\dE$ can be considered as a generalization of the famous Auslander construction for finite-dimensional algebras (see \cite{Au}), and the  following theorem allows us to describe the category $\prf\dE.$
\begin{theorem}\label{maintech}
The defined above DG $\dE$\!--modules $\mK_1,\ldots, \mK_n$ satisfy the following properties:
\begin{enumerate}
\item[1)] For any $1\le i\le n$ the DG $\dE$\!--module $\mK_i$  is  h-projective.
\item[2)] The set DG modules $\{\mK_1,\ldots, \mK_n\}$ classically generate the triangulated category $\prf\dE.$
\item[3)] For any pair $i>j$ the complex $\dHom_{\dE}(\mK_i, \mK_j)$ is acyclic.
\item[4)] For any $1\le i\le n$ the DG endomorphism algebra $\dEnd_{\dE}(\mK_i)$ is semisimple.
\item[5)] If the DG algebra $\dR/\rde$ is separable, then DG algebras $\dEnd_{\dE}(\mK_i)$ are separable too.
\end{enumerate}
\end{theorem}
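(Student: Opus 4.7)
The plan is to verify the five claims in order, exploiting the Yoneda-type identifications \eqref{homall}--\eqref{homP} which reduce every computation to the manipulation of DG ideals inside $\dR$. Properties (1) and (2) are formal: each $\mP_s$ is a direct summand of the free DG $\dE$\!-module $\dE=\bigoplus_{s}\mP_s$, hence h-projective, and the class of h-projective DG modules is closed under cones and shifts; for (2), $\dE$ is a classical generator of $\prf\dE$, so $\{\mP_1,\dots,\mP_n\}$ classically generates $\prf\dE$, and the defining triangle $\mK_i\to\mP_i\to\mP_{i-1}$ gives inductively that $\mP_i\in\langle\mK_1,\dots,\mK_i\rangle$. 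For (3), fix $i>j$ and apply $\dHom_\dE(\mK_i,-)$ to the triangle $\mK_j\to\mP_j\to\mP_{j-1}$; it suffices to show $\dHom_\dE(\mK_i,\mP_s)\simeq 0$ for all $s<i$. Applying $\dHom_\dE(-,\mP_s)$ to the defining triangle of $\mK_i$ yields
\[
\dHom_\dE(\mK_i,\mP_s)\;\simeq\;\Cone\bigl(\dHom_\dE(\mP_{i-1},\mP_s)\lto\dHom_\dE(\mP_i,\mP_s)\bigr),
\]
and for $s\le i-1$ both Hom complexes identify via \eqref{homP} with $\dR/\mJ_s$; a short check of the Yoneda correspondence shows the connecting map corresponds to post-composition with the quotient $\mM_i\twoheadrightarrow\mM_{i-1}$, which is the identity of $\dR/\mJ_s$. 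The cone is therefore acyclic.

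For (4) and (5) the case $i=1$ is immediate: $\dEnd_\dE(\mK_1)\cong\dR/\mJ_1=\dS_{-}$, quasi-isomorphic to $\dS_{+}$ by Lemma \ref{quasi-is}, whose underlying ungraded algebra is a quotient of the semisimple $\un{R}/J$; Proposition \ref{eqsemisimple} then gives that $\dS_{+}$ is semisimple as a DG algebra (and separable when $\dR/\rde$ is). For $i>1$, combining the defining triangle in each slot computes $\dEnd_\dE(\mK_i)$ as an iterated cone of the four complexes $\dHom_\dR(\mM_a,\mM_b)$, $a,b\in\{i-1,i\}$; the plan is to identify this cone, up to quasi-isomorphism of DG algebras, with $\dEnd_\dR(\mJ_{i-1}/\mJ_i)$, where $\mJ_{i-1}/\mJ_i$ is the kernel of the quotient $\mM_i\twoheadrightarrow\mM_{i-1}$. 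Granting this, one checks that $\mJ_{i-1}/\mJ_i$ is annihilated on the right by $\rdi$: the containment $\mJ_{i-1}\cdot\rdi\subseteq J^i$ is immediate, and $\dr(\mJ_{i-1}\cdot\rdi)\subseteq J^i$ follows from $\dr(\mJ_{i-1})\subseteq\mJ_{i-1}\subseteq J^{i-1}$ and $\dr(\rdi)\subseteq\rdi\subseteq J$, so $\mJ_{i-1}\cdot\rdi\subseteq (J^i)_{-}=\mJ_i$. Hence $\mJ_{i-1}/\mJ_i$ is a perfect DG module over the semisimple DG algebra $\dS_{-}\simeq\dS_{+}$, and its DG endomorphism algebra has abstractly semisimple underlying ungraded algebra (a product of matrix algebras over the simple factors of $\un{S}_{+}$), so Proposition \ref{eqsemisimple} yields semisimplicity as a DG algebra; separability over $\kk$ is inherited from that of $\dS_{+}$, giving (5).

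The main obstacle is the identification in step (4) of $\dEnd_\dE(\mK_i)$ with $\dEnd_\dR(\mJ_{i-1}/\mJ_i)$: the short exact sequence $0\to\mJ_{i-1}/\mJ_i\to\mM_i\to\mM_{i-1}\to 0$ in $\Mod\dR$ does not remain exact after applying the underived functor $\dHom_\dR(\mM,-)$, so one cannot simply quote a Yoneda argument. Instead one has to analyze the ``mixed'' complex $\dHom_\dR(\mM_{i-1},\mM_i)\cong\{r+\mJ_i:r\mJ_{i-1}\subseteq\mJ_i\}$, which is a proper subcomplex of $\dR/\mJ_i$, and check that the cancellations in the iterated cone defining $\dEnd_\dE(\mK_i)$ produce exactly the endomorphism DG algebra of $\mJ_{i-1}/\mJ_i$. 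This is the step in which the internal/external DG radical formalism of the preceding subsection plays its essential role.
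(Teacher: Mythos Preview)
Your treatment of (1)--(3) is correct and essentially identical to the paper's.

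The genuine problem is in step (4). Your key claim---that $\dEnd_\dE(\mK_i)$ is quasi-isomorphic \emph{as a DG algebra} to $\dEnd_\dR(\mJ_{i-1}/\mJ_i)$---is false in general. Take $\dR=\kk\langle a,b\rangle/(a,b)^2$ with trivial grading and zero differential, so $n=2$. Then $\mJ_1/\mJ_2=J=\kk a\oplus\kk b$ is a two-dimensional $\dR/J=\kk$\!--module, whence $\dEnd_\dR(\mJ_1/\mJ_2)\cong M_2(\kk)$. But a direct computation of the cone (or the paper's argument) gives $H^*(\dEnd_\dE(\mK_2))\cong\kk$. The two algebras are Morita equivalent but not quasi-isomorphic, so your stated identification cannot hold. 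More conceptually, left multiplication gives an injection $\dR/\widebar{\mL}_i\hookrightarrow\dEnd_\dR(\mJ_{i-1}/\mJ_i)$ with $\widebar{\mL}_i=\{r\in\dR:r\mJ_{i-1}\subseteq\mJ_i\}$, but this map is typically not surjective; the cone you are computing lands on the \emph{source}, not the target.

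The paper handles (4) differently and more directly. It writes $\dEnd_\dE(\mK_i)$ explicitly as a $2\times 2$ matrix DG algebra, exhibits an acyclic (indeed contractible) two-sided DG ideal $\widetilde{\mL}_i$, and identifies the quotient with $\dR/\widebar{\mL}_i$. The remaining work is then to show that the internal DG radical of $\dR/\widebar{\mL}_i$ is trivial: if $r\in\widebar{L}_i+J$ and $\dr(r)\in\widebar{L}_i+J$, one checks from the Leibniz rule that $r\mJ_{i-1}\subseteq\mJ_i$, i.e.\ $r\in\widebar{L}_i$. Corollary~\ref{intsimple} then gives semisimplicity, and (5) follows since $\dR/\widebar{\mL}_i$ is a quotient of $\dR$, so its semisimple part is governed by that of $\dS_{+}$. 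Your observation that $\mJ_{i-1}\cdot\rdi\subseteq\mJ_i$ is correct and morally related---it explains why the division algebras appearing are among those of $\dS_+$---but it does not by itself establish the needed quasi-isomorphism, and replacing $\dR/\widebar{\mL}_i$ by the larger $\dEnd_\dR(\mJ_{i-1}/\mJ_i)$ is not what the cone computation actually yields.
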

\begin{proof}
1) The DG $\dE$\!--modules $\mP_i$ are h-projective as direct summand of $\dE.$ Hence, the DG $\dE$\!--modules
$\mK_i=\Cone(\phi_{i, i-1})[-1]$ are also h-projective as cones in homotopy category $\Ho(\Mod\dE)$ of morphisms between h-projective DG modules.

2) Since $\dE\cong\bigoplus_{s=1}^n \mP_s,$ the DG $\dE$\!--modules $\mP_i, 1\le i\le n$ classically generate the whole
triangulated category $\prf\dE.$ Now $\mP_1\cong\mK_1$ and any $\mP_i$ is isomorphic to a cone of map from $\mP_{i-1}[-1]$ to $\mK_i$ in the triangulated category
$\prf\dE.$ Therefore, the DG modules $\mK_1,\ldots, \mK_n$ also classically generate the triangulated category $\prf\dE.$

3) Let us consider the complex $\dHom_{\dE}(\mK_i, \mP_j)$ with $i>j.$ This complex is isomorphic to the cone of a natural map
from $\dHom_{\dE}(\mP_{i-1}, \mP_j)$ to $\dHom_{\dE}(\mP_{i}, \mP_j),$ which is induced by $\phi_{i, i-1}.$ By \eqref{homP} both these complexes are isomorphic to $\dR/\mJ_j$ and the induced map
is the identity. Hence $\dHom_{\dE}(\mK_i, \mP_j)$ is acyclic, when $i>j.$ Now the complex $\dHom_{\dE}(\mK_i, \mK_j)$ is isomorphic to the cone of a natural map
from $\dHom_{\dE}(\mK_{i}, \mP_j)[-1]$ to $\dHom_{\dE}(\mK_{i}, \mP_{j-1})[-1].$ Thus, for $i>j$ the complex $\dHom_{\dE}(\mK_i, \mK_j)$ is acyclic as a cone of a map between acyclic complexes.

4) Consider the complex $\dHom_{\dE}(\mP_{i-1}, \mP_i)$ and denote in by $\mL_i.$ There is a natural embedding of the complex $\mL_i$ to $\dHom_{\dE}(\mP_{i}, \mP_i)\cong \dR/\mJ_i.$
Moreover, it is easy to see that the complex $\mL_i\subset\dR/\mJ_i$ is a two-sided DG ideal in the DG algebra $\dR/\mJ_i.$ Now consider the DG algebra $\dEnd_{\dE}(\mK_i).$ As a graded algebra it is isomorphic to
\[
\begin{pmatrix}
\dEnd_{\dE}(\mP_{i-1})& \dHom_{\dE}(\mP_{i}, \mP_{i-1}[-1])\\
\dHom_{\dE}(\mP_{i-1}[-1], \mP_i) & \dEnd_{\dE}(\mP_{i})
\end{pmatrix}
\cong
\begin{pmatrix}
\dR/\mJ_{i-1} & \dR/\mJ_{i-1}[-1]\\
\mL_i[1] & \dR/\mJ_i
\end{pmatrix}.
\]

However, the differential $D_i$ of the DG algebra $\dEnd_{\dE}(\mK_i)$ acts follows
\[
D_i
\begin{pmatrix}
 y & y'\\
l & x
\end{pmatrix}
=
\begin{pmatrix}
-d_{i-1} (y) -\bar{l} & -d_{i-1}(y')-\bar{x}+(-1)^{q}y\\
d_i (l) & d_i (x) +(-1)^{q} l
\end{pmatrix},
\]
where $y\in\dR/\mJ_{i-1}, x\in\dR/\mJ_i$ are element of degree $q,$ the element $y'\in\dR/\mJ_{i-1}$ has degree $q-1,$ and  $l\in \mL_{i}\subset\dR/\mJ_i$ is an element of degree $q+1.$
The elements $\bar{x}, \bar{l}$ are the images of $x, l$
under natural projection $\dR/\mJ_i\to \dR/\mJ_{i-1},$  while $d_i, d_{i-1}$ are the differentials on $\dR/\mJ_i$ and $\dR/\mJ_{i-1},$ respectively. The DG algebra
$\dEnd_{\dE}(\mK_i)$ has a two-sided ideal $\widetilde{\mL}_i$ of the form

\[
\widetilde{\mL}_i=
\begin{pmatrix}
\dR/\mJ_{i-1} & \dR/\mJ_{i-1}[-1]\\
\mL_i[1] & \mL_i
\end{pmatrix}
\]
that is closed under action of the differential $D_i.$ Therefore, $\widetilde{\mL}_i\subset\dEnd_{\dE}(\mK_i)$ is a DG ideal.
Moreover, it is acyclic and it is even homotopic to zero. Thus,  the natural projection
$\dEnd_{\dE}(\mK_i)\to \dEnd_{\dE}(\mK_i)/\widetilde{\mL}_i$ is quasi-isomorphism of DG algebras and the DG algebra $\dEnd_{\dE}(\mK_i)/\widetilde{\mL}_i$
is isomorphic to the DG algebra $(\dR/\mJ_i)/\mL_i.$

Now let us explore the DG algebra $(\dR/\mJ_i)/\mL_i$ and show that it is semisimple. Denote by $\widebar{\mL}_i\subset\dR$ the DG ideal that is the preimage of
$\mL_i$ with respect to the projection $\dR\to \dR/\mJ_i.$ Thus, we have $(\dR/\mJ_i)/\mL_i\cong \dR/\widebar{\mL}_i.$
It directly follows from the definition of $\mL_i\subset \dR/\mJ_{i}$ that the DG ideal $\widebar{\mL}_i$ consists of all elements $r\in \dR$ for which
$r\mJ_{i-1}\subseteq\mJ_i.$

Let us consider the DG ideal $\widebar{\mL}_i=(\widebar{L}_i, \dr)$ and the DG algebra $\dR/\widebar{\mL}_i.$ We would like to show that the internal DG radical of the DG algebra
$\dR/\widebar{\mL}_i$ is trivial. Take the Jacobson radical of the algebra $R/\widebar{L}_i$ and consider its preimage $\widebar{L}_i+J$ under the projection
$R\to R/\widebar{L}_i.$
Let $r$ be an element from the ideal  $\widebar{L}_i+J.$ Then, at first, we have $r(J^{i-1})_{-}\subseteq J^{i}.$ Now assume that
$\dr (r)$ belongs to $\widebar{L}_i+J.$
Consider any element $j\in (J^{i-1})_{-}$ and take $rj\in J^i.$ Let us show that $rj$ also belongs to the internal DG ideal $ (J^{i})_{-}.$
Indeed we have
\[
\dr(rj)=\dr( r) j +(-1)^{q} r \dr( j),\quad\text{where}\;\; q\;\;\text{is the degree of}\;\; r.
\]
Since $r, \dr( r)\in \widebar{L}_i+J$ and $j, \dr( j)\in (J^{i-1})_{-},$ we obtain that the element $\dr(rj)\in J^{i}.$ Hence, $rj\in J^i$ belongs to the internal ideal $(J^{i})_{-}$ in fact.
Finally, this implies that the element $r\in \widebar{L}_i+J$ belongs to $\widebar{L}_i$ in reality. This argument shows that the internal DG radical of the DG algebra
$\dR/\widebar{\mL}_i$ is trivial. Therefore, by Corollary \ref{intsimple}, the DG algebra $\dR/\widebar{\mL}_i$ is semisimple and the DG algebra $\dEnd_{\dE}(\mK_i),$ which is quasi-isomorphic to
$\dR/\widebar{\mL}_i,$ is also semisimple.

5) Since the DG algebra $\dS_{+}=\dR/\rde$ is semisimple, the DG category $\prfdg\dS_{+}$ is quasi-equivalent to a gluing $\prfdg D_1\oplus\cdots\oplus \prfdg D_m,$ where all $D_i$ are finite-dimensional division algebras over $\kk.$ Moreover, all of them are separable. The semisimple DG algebra $\dEnd_{\dE}(\mK_i)$ is quasi-isomorphic to
$\dR/\widebar{\mL}_i.$ Therefore, any summand in the decomposition of $\prfdg\dEnd_{\dE}(\mK_i)$ is quasi-equivalent to one of $\prfdg D_i.$
Thus, all semisimple DG algebras $\dEnd_{\dE}(\mK_i)$ are also separable over the base filed $\kk.$
\end{proof}

Let $\dR=(R, \dr)$ be a finite-dimensional DG algebra. With any such DG algebra we associated a DG algebra $\dE=(E, d_{\dE}),$  where $\dE=\dEnd_{\dR}(\mM)$
and $\mM=\bigoplus_{p=1}^{n} \mM_p=\bigoplus_{p=1}^{n}\dR/\mJ_p.$ This construction can be considered as a generalization of the Auslander construction for finite-dimensional algebras \cite{Au} to the case
of DG algebras.

Consider the DG $\dE$\!--module $\mP_n=\dHom_{\dR}(\mM, \mM_n).$ It is h-projective and  we have $\dEnd_{\dE}(\mP_n)\cong\dR$ by \eqref{homP}.
Thus, the DG $\dE$\!--modules $\mP_n$ is actually a DG $\dR\hy\dE$\!--bimodule and it induces two functors
\[
(-)\stackrel{\bL}{\otimes}_{\dR} \mP_n: \D(\dR)\lto\D(\dE)\quad\text{and}\quad
\bR\Hom_{\dE}(\mP_n, -):\D(\dE)\lto\D(\dR)
\]
that are adjoint to each othetr. The DG $\dE$\!--module $\mP_n$ is perfect and, hence, the derived functor $(-)\stackrel{\bL}{\otimes}_{\dR} \mP_n$  sends perfect modules to perfect ones.
Thus, the $\dR\hy\dE$\!--bimodule $\mP_n$ gives a quasi-functor $\mR: \prfdg\dR\to \prfdg\dE$ whose homotopy functor is exactly the functor
\[
(-)\stackrel{\bL}{\otimes}_{\dR} \mP_n: \prf\dR\lto\prf\dE.
\]

On the other hand, denote by $\D_{\cf}(\dR)\subset\D(\dR)$ the subcategory consisting of all cohomologically finite DG $\dR$\!--modules,
i.e all such $\mT$ that $\dim_{\kk}\bigoplus_i  H^i(\mT)<\infty.$ The DG functor $\dHom_{\dE}(\mP_n, -)$ sends all cohomologicaly finite DG $\dE$\!--modules to cohomologically finite DG $\dR$\!--modules and, hence,  it induces the functor
\[
\bR\Hom_{\dE}(\mP_n, -):\D_{\cf}(\dE)\lto\D_{\cf}(\dR).
\]

Let us consider the DG $\dE$\!--modules $\mK_i, 1\le i\le n$ from Theorem \ref{maintech} and denote by $\mK$ the direct sum $\bigoplus_{i=1}^{n}\mK_i.$ We can introduce a new finite-dimensional DG algebra $\dK=\dEnd_{\dE}(\mK).$ The following theorem allows us to describe the DG category $\prfdg\dE$ and a relation between DG categories  $\prfdg\dE$ and $\prfdg\dR.$

\begin{theorem}\label{DGalgebra}
Let $\dR=(R, \dr)$ be a finite-dimensional DG algebra of index nilpotency $n$ and let
$\dE=\dEnd_{\dR}(\bigoplus_{p=1}^{n} \mM_p)$ be the DG endomorphism algebra defined above. Then the following properties hold:
\begin{enumerate}
\item[1)] The functors  $(-)\stackrel{\bL}{\otimes}_{\dR} \mP_n: \prf\dR\to\prf\dE,\quad \D(\dR)\to\D(\dE)$ are fully faithful.
\item[2)] The triangulated category $\prf \dE$ has a full semi-exceptional collection of the form
\[
\prf\dE=\langle E_1,\dots, E_n\rangle,
\]
where the subcategories $\langle E_i\rangle$ are generated by DG $\dE$\!--modules $\mK_i$ from Theorem \ref{maintech}.
\item[3)] The DG category $\prfdg\dE$ is quasi-equivalent to $\prfdg\dK,$ where $\dK=\dEnd_{\dE}(\bigoplus_{i=1}^{n}\mK_i).$

\item[4)] The triangulated category $\prf \dE$ is regular and there is an equivalence
\[
\prf\dE\stackrel{\sim}{\lto}\D_{\cf}(\dE).
\]
\item[5)] If the semisimple part $\dS_{+}=\dR/\rde$ is separable, then the DG algebra $\dE$ is smooth.
\item[6)] If $\dR$ is smooth, then $\prf\dR$ is admissible in $\prf\dE.$
\end{enumerate}
\end{theorem}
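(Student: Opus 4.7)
The plan is to extract the six items from Theorem \ref{maintech} by invoking the general theory of gluings, semi-exceptional collections, regularity, and admissibility accumulated earlier. Item (1) is a derived Morita argument: since $\mP_n$ is h-projective (a direct summand of $\dE$) and $\dEnd_\dE(\mP_n)\cong\dR$ by \eqref{homP}, the unit of the adjunction $(-)\otimes^{\bL}_\dR\mP_n \dashv \bR\Hom_\dE(\mP_n,-)$ is an isomorphism on $\dR$; since the left adjoint preserves coproducts and $\dR$ compactly generates $\D(\dR)$, full faithfulness follows on all of $\D(\dR)$ and restricts to $\prf\dR$. For (2) I would combine items (2)--(4) of Theorem \ref{maintech}: the $\mK_i$ classically generate $\prf\dE$ and are semi-orthogonal, and each $\dEnd_\dE(\mK_i)$ is semisimple in the sense of Definition \ref{simple}, so by Remark \ref{semi-ex} the subcategory $\langle\mK_i\rangle\simeq \prf\dEnd_\dE(\mK_i)$ admits a semi-exceptional generator $E_i$ with $\langle E_i\rangle=\langle\mK_i\rangle$. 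For (3), since $\bigoplus_i\mK_i$ is a classical generator of $\prf\dE$ and $\dK=\dEnd_\dE(\bigoplus_i\mK_i)$, the standard derived Morita statement (cf.\ \cite{Ke,Ke2}, \cite[Prop.\ 1.15]{LO}) gives $\prfdg\dE\simeq\prfdg\dK$.

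For (4), the semi-orthogonal decomposition from (2) exhibits $\prf\dE$ as built from regular pieces (each equivalent to perfect modules over a finite-dimensional semisimple DG algebra, which is regular), so stacking strong generators along the filtration yields regularity of $\prf\dE$. For the equivalence $\prf\dE\simeq\D_\cf(\dE)$ the inclusion $\subseteq$ is clear since $\dE$ is finite-dimensional, hence every perfect module has finite-dimensional cohomology. For essential surjectivity, given $\mT\in\D_\cf(\dE)$, the functor $\Hom(-,\mT)|_{\prf\dE}$ takes finite-dimensional values (each $\mK_i$ is finite-dimensional up to homotopy by Proposition \ref{obj}, and $\mT$ has finite-dimensional cohomology), so Theorem \ref{saturated} represents it by some $Y\in\prf\dE$; the induced morphism $Y\to\mT$ is an isomorphism because its cone has vanishing $\Hom$ from every perfect module and $\prf\dE$ compactly generates $\D(\dE)$.

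For (5), item (2) together with Theorem \ref{maintech}(5) shows that $\prf\dE$ carries a full separable semi-exceptional collection; properness is automatic from finite-dimensionality of $\dE$, so Corollary \ref{sep_coll} makes $\dE$ smooth. For (6), if $\dR$ is smooth then $\prf\dR$ is regular (smoothness implies regularity, see \cite{Lu}) and idempotent complete, and by (1) it embeds fully faithfully into the proper category $\prf\dE$, so Proposition \ref{admissible} forces the embedding to be admissible. The main obstacle I expect is part (4): carrying out the finiteness check needed for Theorem \ref{saturated}, and then running the Yoneda/compact-generation argument that promotes representability of $\Hom(-,\mT)$ on $\prf\dE$ to an honest isomorphism $\mT\cong Y$ in the full derived category $\D(\dE)$.
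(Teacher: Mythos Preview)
Your proposal is correct and follows essentially the same route as the paper's own proof: each of items (1)--(6) is deduced from the corresponding parts of Theorem~\ref{maintech} together with the standard Morita/devissage argument (\cite[4.2]{Ke}, \cite[Prop.~1.15]{LO}), Remark~\ref{semi-ex}, the regularity of gluings of regular pieces, Theorem~\ref{saturated}, Corollary~\ref{sep_coll}, and Proposition~\ref{admissible}, just as you outline. Your anticipated ``main obstacle'' in (4) is not one: the paper argues exactly as you do, invoking saturatedness of the proper regular category $\prf\dE$ to represent the cohomological functor $\Hom(-,\mT)$ for $\mT\in\D_{\cf}(\dE)$, and the cone/compact-generation step you spell out is the implicit content of the paper's sentence ``any object from $\D_{\cf}(\dE)$ is isomorphic to an object from $\prf\dE$.''
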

\begin{proof}
1) It follows from the fact that $\mP_n$ is a perfect as DG $\dE$\!--module and  $\dEnd_{\dE}(\mP_n)\cong\dR.$ It is a standard argument from `devissage' (see, e.g., \cite[4.2]{Ke}, \cite[Prop. 1.15]{LO}).

2) It directly follows from Theorem \ref{maintech}. Indeed, the subcategories $\langle \mK_i\rangle$ are semi-orthogonal by property 3) of Theorem \ref{maintech} and give a semi-orthogonal decomposition
\[
\prf\dE=\langle \langle \mK_1\rangle, \ldots, \langle \mK_n\rangle\rangle
\]
because, by property 2), they generate the whole category. All DG algebras $\dEnd_{\dE}(\mK_i)$ are semisimple by property 4). This implies that every subcategory
$\langle \mK_i\rangle$ has a semi-exceptional generator $E_i,$ i.e. $\langle \mK_i\rangle=\langle E_i\rangle$ (see Remark \ref{semi-ex}). Thus, we obtain a full semi-exceptional collection of the form $\langle E_1,\dots, E_n\rangle$ in the triangulated category $\prf\dE.$

3) Consider the DG $\dE$\!-module $\mK=\bigoplus_{i=1}^{n}\mK_i.$ It is h-projective by 1) of Theorem \ref{maintech} and it is generates the category
$\prf\dE.$ Take the DG algebra $\dK=\dEnd_{\dE}(\mK).$ By the same argument from `devissage' as in 1) the DG categories $\prfdg\dE$ and $\prfdg\dK$ are quasi-equivalent (see, e.g., \cite[Prop. 1.15]{LO}).

4) We showed that the category $\prf \dE$ has a full semi-exceptional collection. It gives a semi-orthogonal decomposition with semisimple summands $\langle E_i\rangle.$
Thus, we obtain that $\prf\dE$ is regular, because all summands are regular  (see Section \ref{gluDNS} and proof in \cite[Prop. 3.20]{O_glue}).

Since $\prf \dE$ is proper and  regular, it is saturated by Theorem \ref{saturated}. This means that any  cohomological functor  from the category $\prf \dE^{\op}$ to the category of finite-dimensional
vector spaces is representable. But any object from the subcategory $\D_{\cf}(\dE)$ gives  such a functor and, hence, any object from $\D_{\cf}(\dE)$ is isomorphic to
an object from $\prf\dE.$ Therefore, the natural inclusion  $\prf\dE{\lto} \D_{\cf}(\dE)$ is an equivalence.

5) If the semisimple part $\dS_{+}=\dR/\rde$ is separable, then DG algebras $\dEnd_{\dE}(\mK_i)$ are separable too by 6) of Theorem \ref{maintech}. Hence, the full semi-exceptional collection $\langle E_1,\dots, E_n\rangle$ in the triangulated category $\prf\dR$
is separable. Thus, by Corollary \ref{sep_coll}, the DG algebra $\dE$ is $\kk$\!--smooth.

6) It is proved in \cite{Lu} that a small smooth DG category is regular. Hence, the triangulated category $\prf\dR$ is proper and regular.
By Proposition \ref{admissible} it is admissible in $\prf\dE.$
\end{proof}

The DG algebras $\dR$ and $\dE$ define two proper derived noncommutative schemes $\dX=\prfdg\dR$ and $\dY=\prfdg\dE,$ respectively.
The $\dR\hy\dE$\!--bimodule $\mP_n,$ which is a quasi-functor, gives a morphism of noncommutative schemes
$\mf: \dY\to\dX.$ Since the noncommutative scheme $\dY$ is regular (or even smooth), the morphism $\mf$ can be considered as an example of a resolution of singularities  of the noncommutative scheme $\dX=\prfdg\dR$
(see Definition 2.11 \cite{O_alg}).

For further, we will need a  definition of a pretriangulated DG category.
At first, to any DG category $\dA$ we can attach a DG category $\dA^{\ptr}$ which is called a {\sf pretriangulated hull}
(see \cite{BK2, Ke2}).
The DG category $\dA^{\ptr}$ is obtained by  adding to $\dA$
all shifts, all cones of all morphisms, cones of morphisms between cones and so on.
There is a canonical DG functor
 $\dA^{\ptr}\to \Mod\dA$ which is fully faithful, and under such embedding
$\dA^{\ptr}$ is equivalent to the DG category $\SFf\dA$ of finitely generated semi-free DG $\dA$\!--modules.
The pretriangulated hull $\dA^{\ptr}$ is  small for any small DG category $\dA,$ and it forms a small version of the DG category $\SFf\dA,$ which is essentially small.
A DG category $\dA$ is called {\sf pretriangulated}  if the inclusion $\dA\to\dA^{\ptr}$
is a quasi-equivalence.
This property is equivalent to the property for the homotopy category $\Ho(\dA)$ to be a triangulated
subcategory in $\Ho(\Mod\dA).$
The DG category $\dA^{\ptr}$ is already pretriangulated and $\Ho(\dA^{\ptr})$ is a triangulated category.
In the case when the DG category $\dA$ is pretriangulated and $\Ho(\dA)$ is idempotent complete,
the Yoneda functor $\mh^{\bullet}: \dA \to\prfdg\dA$ is a quasi-equivalence and the homotopy functor
$h: \Ho(\dA)\to \prf\dA$ between the triangulated categories is an equivalence.

Now we can formulate a corollary from the previous theorem which provides a complete characterization of all derived noncommutative schemes $\dX=\prfdg\dR$ obtained from a finite-dimensional DG algebra $\dR$ with a separable semisimple part $\dR/\rde.$

\begin{corollary}\label{DGinclus}
Let $\dA$ be a small $\kk$\!--linear DG category.
The following properties are equivalent:
\begin{enumerate}
\item[1)] The DG category $\dA$ is quasi-equivalent to $\prfdg\dR,$ where $\dR$ is a finite-dimensional  DG $\kk$\!--algebra for which DG
$\kk$\!--algebra $\dR/\rde$ is separable.
\item[2)] The DG category $\dA$ is quasi-equivalent to a full DG subcategory $\dC$ of a pretriangulated $\kk$\!--linear DG category $\dB$ such that
\begin{enumerate}
\item[a)]
the homotopy category $\H^0(\dB)$  is a proper idempotent complete triangulated category possessing a full separable semi-exceptional collection,
\item[b)] the homotopy category $\H^0(\dC)\subseteq\H^0(\dB)$ is an idempotent complete triangulated subcategory admitting  a classical generator.
\end{enumerate}
Moreover, the DG algebra $\dR$ is smooth if and only if $\H^0(\dC)\subseteq \H^0(\dB)$ is admissible.
\end{enumerate}
\end{corollary}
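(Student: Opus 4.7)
The plan is to prove the two implications separately and then treat the admissibility clause. For $(1) \Rightarrow (2)$, I would take $\dB := \prfdg\dE$ with $\dE$ the DG algebra from the Auslander-type construction in Section~\ref{ausl}; Theorem~\ref{DGalgebra} does almost all the work. Part (1) of that theorem gives the fully faithful embedding $(-)\stackrel{\bL}{\otimes}_{\dR}\mP_n \colon \prfdg\dR \hookrightarrow \prfdg\dE$, whose essential image serves as $\dC$; parts (2) and (5) provide the full separable semi-exceptional collection on $\H^0(\dB)=\prf\dE$; properness and idempotent completeness of $\H^0(\dB)$ are automatic since $\dE$ is quasi-isomorphic to a finite-dimensional DG algebra by Corollary~\ref{sep_coll}. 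Then $\dC \simeq \prfdg\dR \simeq \dA$ and $\H^0(\dC) \cong \prf\dR$ has $\dR$ as a classical generator.

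For $(2) \Rightarrow (1)$, the first move is to invoke Corollary~\ref{sep_coll} on the DG endomorphism algebra of a classical generator of $\H^0(\dB)$; since $\dB$ is pretriangulated and $\H^0(\dB)$ is idempotent complete, a standard d\'evissage identifies $\dB$ with $\prfdg\dE$ for a smooth finite-dimensional DG algebra $\dE$. Next, I would pick a classical generator $\mE_0$ of $\H^0(\dC)$ and apply Proposition~\ref{obj} to replace it by a homotopy equivalent finite-dimensional h-projective module $\widetilde\mE_0 \in \prfdg\dE$. Setting $\dR := \dEnd_{\dE}(\widetilde\mE_0)$ yields a finite-dimensional DG algebra, and the same d\'evissage argument (as in Theorem~\ref{DGalgebra}(3)) identifies $\dC$ with $\prfdg\dR$, so $\dA \simeq \prfdg\dR$ as desired.

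The main obstacle is verifying separability of $\dR/\rde$. The key observation is that the underlying ungraded algebra of $\dR$ is isomorphic to the endomorphism algebra of the underlying ungraded module of $\widetilde\mE_0$ over the underlying ungraded algebra of $\dE$. Tracing through the gluing construction behind Corollary~\ref{sep_coll}, the semisimple quotient of the underlying ungraded algebra of $\dE$ is a product of matrix algebras over the same separable division $\kk$-algebras that appear as endomorphism algebras in the semi-exceptional collection, hence is separable. A standard fact from the representation theory of finite-dimensional algebras then gives that $\End_A(M)/J(\End_A(M))$ is a product of matrix algebras over opposites of the division factors of $A/J(A)$, so separability passes from $A/J(A)$ to $\End_A(M)/J(\End_A(M))$. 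Since the Jacobson radical of the underlying ungraded algebra of $\dR$ is contained in the underlying ungraded algebra of $\rde$, this shows that the underlying ungraded algebra of $\dR/\rde$ is separable semisimple, and Proposition~\ref{eqsemisimple} implies $\dR/\rde$ is separable as a DG algebra.

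Finally, for the admissibility equivalence: if $\dR$ is $\kk$-smooth then $\prf\dR$ is proper and regular (smoothness implies regularity by \cite{Lu}), so Proposition~\ref{admissible} delivers its admissibility in $\prf\dE$. Conversely, admissibility of $\H^0(\dC)$ in $\H^0(\dB)=\prf\dE$ produces a two-sided semi-orthogonal decomposition with $\prf\dR$ as a summand; by Proposition~\ref{gluing_semi-orthogonal} this realizes $\prfdg\dE$ as a gluing involving $\prfdg\dR$, and Proposition~\ref{smooth_glue} then forces $\dR$ to be smooth because $\dE$ is.
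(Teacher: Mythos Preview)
Your overall strategy is correct and matches the paper's proof almost step for step: for $(1)\Rightarrow(2)$ you both take $\dB=\prfdg\dE$ with $\dE$ the Auslander-type algebra and invoke Theorem~\ref{DGalgebra}; for $(2)\Rightarrow(1)$ you both replace $\dB$ by $\prfdg\dE$ with $\dE$ finite-dimensional via Corollary~\ref{sep_coll}, take a classical generator of $\H^0(\dC)$, and use Proposition~\ref{obj} to get a finite-dimensional $\dR$; and the admissibility clause is handled identically via Propositions~\ref{admissible}, \ref{gluing_semi-orthogonal}, and~\ref{smooth_glue}.

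There is, however, a genuine gap in your separability argument. The ``standard fact'' you quote---that for a finite-dimensional algebra $A$ and an arbitrary finite-dimensional module $M$, the algebra $\End_A(M)/J(\End_A(M))$ is a product of matrix algebras over the division factors of $A/J(A)$---is false as stated. Over $\kk=\mathbb{F}_p(s)$, take $A=\kk Q$ for the Kronecker quiver, so $A/J(A)=\kk\times\kk$ is separable; the regular indecomposable corresponding to the irreducible polynomial $t^p-s$ has endomorphism algebra $\kk(s^{1/p})$, which is an inseparable field extension of $\kk$ and certainly not one of the division factors of $A/J(A)$. What rescues your argument is a point you do not make explicit: the module $\widetilde\mE_0$ coming out of Proposition~\ref{obj} is a direct summand \emph{as a DG module} of a finitely generated semi-free DG $\dE$-module, so its underlying ungraded module is a direct summand of a free $\un{E}$-module and hence is projective. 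For \emph{projective} $M$ the statement is indeed standard, since the indecomposable summands are then projective indecomposables and their endomorphism rings modulo radical are exactly the (opposites of the) $D_i$. Once you insert this observation, your argument goes through.

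The paper handles this point in essentially the same way but stays at the DG level: it notes that $\mR$ is a DG direct summand of a finitely generated semi-free $\mN$, so $\dR=e\,\dEnd_{\dE}(\mN)\,e$ for an idempotent $e$, and hence the semisimple part of $\dR$ is a corner of that of $\dEnd_{\dE}(\mN)$, which in turn is controlled by the separable semisimple part of $\dE$. Your reduction to the ungraded picture is a legitimate alternative, but it needs the projectivity of the underlying module to be stated.
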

\begin{proof}
$1)\Rightarrow 2)$ It directly follows from Theorems \ref{DGalgebra} and \ref{maintech}.
Let $\dA\cong\prfdg\dR,$ where $\dR$ is a finite-dimensional  DG $\kk$\!--algebra with a separable semisimple part $\dR/\rde.$
Consider the DG algebra $\dE=\dEnd_{\dR}(\bigoplus_{p=1}^{n} \mM_p)$ as above. The category $\prf\dE$ is proper and idempotent complete.
It also has a full semi-exceptional collection by property 2). By property 5) of Theorem \ref{maintech} this collection is separable.
By property 1) of Theorem \ref{DGalgebra} the quasi-functor, which is defined by the bimodule $\mP_n,$ establishes a quasi-equivalence between $\prfdg\dR$ and a full DG subcategory of $\prfdg\dE,$ the homotopy category of which is equivalent to $\prf\dR.$ Thus its homotopy category is  an idempotent complete triangulated subcategory  of $\prf\dE$ admitting a classical generator.

$2)\Rightarrow 1)$
Consider the objects $\mE_1,\ldots, \mE_k$ in the DG category $\dB$ which form a full separable semi-exceptional collection
in the homotopy category $\H^0(\dB).$ Denote by $\dE$ the DG endomorphism algebra $\dEnd_{\dB}(\mE),$ where
$\mE=\bigoplus_{i=1}^{k}\mE_i.$ The object $\mE$ is a classical generator for $\H^0(\dB).$ Since
$\H^0(\dB)$ is idempotent complete, the DG category $\dB$ is quasi-equivalent to $\prfdg\dE$ (see, e.g., \cite{Ke, Ke2}, \cite[Prop. 1.15]{LO}).
By Corollary \ref{sep_coll} the DG $\kk$\!--algebra $\dE$ is smooth and it is quasi-isomorphic to a finite-dimensional DG algebra.
Its semisimple part $\dE/\mJ_{+}$ is separable.

Let us consider a classical generator $\mR\in\dC$ for the triangulated subcategory $\H^0(\dC)\subseteq\H^0(\dB).$
By the same reason as above, the DG category $\dC$ is equivalent to $\prfdg\dR,$ where $\dR=\dEnd_{\dB}(\mR).$
By Proposition \ref{obj}, the DG algebra $\dR$ is quasi-isomorphic to a finite-dimensional DG algebra.
We can assume that $\dR$ is finite-dimensional by itself and we have to argue that the DG $\kk$\!--algebra $\dR/\rde$ is separable.
As in Proposition \ref{obj} the object $\mR$ is homotopy equivalent to a direct summand of a finitely generated semi-free DG $\dE$\!--module $\mN.$
Hence, the semisimple part $\dR/\rde$ is a direct summand of a semisimple part of the DG endomorphism algebra
$\dEnd_{\dB}(\mN).$ On the other hand, the semisimple part of the  DG algebra $\dEnd_{\dB}(\mN)$ is covered by
the semisimple part of a free DG $\dE$\!--module. Since the semisimple part of $\dE$ is separable, the DG algebra $\dR/\rde$ is separable too.

 Suppose that $\dR$ is smooth. Hence, $\prf\dR$ is smooth and it is also regular (see \cite{Lu}). Therefore, the triangulated subcategory $\H^0(\dC)$ is proper and regular.
By Proposition \ref{admissible} it is admissible in $\H^0(\dB).$
Inverse, if $\H^0(\dC)\subseteq \H^0(\dB)$ is admissible, then $\dB$ is equivalent to a gluing of $\dC$ and its left (or right) orthogonal.
Since $\dB$ is smooth, then $\dC$ is smooth by Proposition \ref{smooth_glue}.
This implies that $\prfdg\dR$ is also smooth as well as the DG algebra $\dR.$
\end{proof}

The following corollary is an answer on a question of Mikhail Khovanov.
\begin{corollary}
Let $\dR$ be a smooth finite-dimensional DG algebra. Then $K_0(\prf\dR)$ is a finitely generated free abelian group.
\end{corollary}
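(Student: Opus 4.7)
The plan is to apply Theorem \ref{DGalgebra} to the given smooth finite-dimensional DG $\kk$\!--algebra $\dR$. That theorem furnishes another finite-dimensional DG algebra $\dE$ together with a fully faithful embedding $\prf\dR\hookrightarrow\prf\dE$ such that $\prf\dE$ carries a full semi-exceptional collection $\langle E_1,\dots,E_n\rangle$; crucially, part 6) of the theorem asserts that under the smoothness hypothesis on $\dR$ this embedding upgrades to an admissible one. Consequently we obtain a semi-orthogonal decomposition $\prf\dE=\langle\prf\dR,\prf\dR^{\perp}\rangle$ in which both factors are admissible.

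By the standard additivity of Grothendieck groups along semi-orthogonal decompositions (which follows from the existence of the projection functors provided by admissibility), this splits as
\[
K_0(\prf\dE)\;\cong\;K_0(\prf\dR)\,\oplus\,K_0(\prf\dR^{\perp}).
\]
In parallel, the full semi-exceptional decomposition of $\prf\dE$ yields
\[
K_0(\prf\dE)\;\cong\;\bigoplus_{i=1}^{n}K_0(\langle E_i\rangle).
\]
Each $\langle E_i\rangle$ is equivalent to $\prf\dS_i$ where $\dS_i=\dEnd_{\dE}(E_i)$ is a semisimple DG algebra in the sense of Definition \ref{simple}, so $\prfdg\dS_i\simeq\prfdg D_{i,1}\oplus\cdots\oplus\prfdg D_{i,m_i}$ for finitely many finite-dimensional division $\kk$\!--algebras $D_{i,j}$.

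Since for a finite-dimensional division algebra $D$ every object of $\prf D$ is quasi-isomorphic to a finite direct sum of shifts of $D$, one has $K_0(\prf D)\cong\ZZ$. Combining the two displays above therefore gives $K_0(\prf\dE)\cong\ZZ^{N}$ with $N=\sum_{i}m_i$, a finitely generated free abelian group. Since $K_0(\prf\dR)$ appears as a direct summand of this group, and since every subgroup of a finitely generated free abelian group is itself finitely generated and free, the corollary follows.

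There is no real obstacle here beyond invoking the previous theorem: all the work has already been done in Theorem \ref{DGalgebra}. The only step that is not strictly formal is the additivity of $K_0$ along a semi-orthogonal decomposition, but this is classical and follows at once from the admissibility provided by part 6) of that theorem, together with the elementary computation $K_0(\prf D)\cong\ZZ$ for a finite-dimensional division algebra $D$.
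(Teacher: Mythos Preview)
Your proof is correct and follows essentially the same route as the paper's: invoke Theorem \ref{DGalgebra} (specifically parts 2) and 6)) to embed $\prf\dR$ admissibly into $\prf\dE$, observe that the full semi-exceptional collection forces $K_0(\prf\dE)$ to be free of finite rank, and conclude that the direct summand $K_0(\prf\dR)$ is free of finite rank as well. The paper's own proof is simply terser, asserting that $K_0(\prf\dE)$ is finitely generated free without spelling out the decomposition into $K_0(\prf D_{i,j})\cong\ZZ$ as you do.
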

\begin{proof}
By Theorem \ref{DGalgebra}, the triangulated category $\prf\dR$ can be realized as an admissible subcategory in the triangulated category
$\prf\dE$ that has a full semi-exceptional collection. The Grothendieck group $K_0(\prf\dE)$ is a finitely generated free abelian group.
Thus, the group $K_0(\prf\dR)$ is also  a finitely generated free abelian group as a direct summand of $K_0(\prf\dE).$
\end{proof}

\begin{problem}{\rm
For any smooth finite-dimensional DG algebra $\dR$ to find the rank of the free abelian group $K_0(\prf\dR)$
(see also Conjecture \ref{phantom}).
}
\end{problem}

\section{Geometric realizations of finite-dimensional DG algebras and examples}

\subsection{Geometric realizations}
In this section we are going to discuss geometric realizations for finite-dimensional DG algebras.
It will be shown that all of them (with a separable semisimple part) appear as DG endomorphism algebras of perfect complexes on smooth projective schemes
with full separable semi-exceptional collections and, moreover, this fact gives a complete characterization of such class of DG algebras.

A {\sf geometric realization} of a  derived noncommutative scheme $\dX=\prfdg\dR$ consists of 
 a usual commutative scheme $Z$ and a localizing subcategory $\L\subseteq \D_{\Qcoh}(Z),$  the natural enhancement $\dL$ of which
is quasi-equivalent
to the DG category $\SF\dR$ (see \cite[Definition 2.17]{O_alg}).
In other words a geometric realization is a fully faithful functor $\D(\dR)\to \D_{\Qcoh}(Z)$ that preserves all direct sums and is defined on the level of DG categories.
Since the category $\D(\dR)$ is compactly generated and the inclusion functor $\D(\dR)\to \D_{\Qcoh}(Z)$ commutes with all direct sums, there is a right adjoint to the inclusion functor by Brown representability theorem (see \cite[9.1.19]{Ne_book}).
Thus, the category $\D(\dR)$ can be obtained as a Verdier quotient of the triangulated category $\D_{\Qcoh}(Z).$

The most important and interesting class of  geometric realizations is related to  quasi-functors $\mF:\prfdg\dR\to \prfdg Z$
for which the induced homotopy functor $\prf\dR\to\prf Z$ is fully faithful.
In this case the subcategory $\L$ is compactly generated by  objects that are perfect complexes on $Z$
and, hence, there is an inclusion of the subcategories of compact objects $\L^c\cong\prf\dR\subset \prf Z.$
Such geometric realizations will be called {\sf {plain}}. In the case when the subcategory
$\prf\dR\subset\prf Z$ is admissible, the geometric realization will be called {\sf pure} (see \cite[Sect. 2.4.]{O_alg}).

Let $X$ and $Y$ be two usual irreducible smooth projective $\kk$\!--schemes and let $\mE\in\prfdg(X\times_{\kk} Y)$ be a perfect complex on the product $X\times_{\kk} Y.$
Consider the DG category which is  the gluing $(\prfdg X) \underset{\mE}{\oright}(\prfdg Y).$ This DG category defines a derived noncommutative scheme which we denote by $\dZ:=X\underset{\mE}{\oright} Y.$   Taking into account Propositions \ref{smooth_glue},
we see that the derived noncommutative scheme $\dZ$ is proper and smooth. It is natural to ask about existence of  geometric realizations for
such noncommutative schemes. The following theorem is proved in \cite{O_glue}.

\begin{theorem}\label{main}\cite[Th. 4.11]{O_glue}
Let $X$ and $Y$ be smooth irreducible projective schemes over a field $\kk$ and let $\mT$ be a perfect complex
on the product $X\times_{\kk} Y.$ Let $\dZ=X \underset{\mT}{\oright} Y$ be the derived noncommutative scheme obtained by the gluing of $X$ and $Y$ via $\mE.$
Then there exist a smooth projective scheme $V$ and a quasi-functor  $\mF: \prfdg\dZ\hookrightarrow\prfdg V$ which give a pure geometric realization
for the noncommutative scheme $\dZ.$
\end{theorem}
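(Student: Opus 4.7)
My plan is to reduce $\mT$ to a standard geometric form and then realize the gluing as an admissible subcategory inside the derived category of a suitable iterated projective bundle or blow-up over $X$, $Y$, or an auxiliary ambient scheme. First, since $\mT \in \prfdg(X \times_{\kk} Y)$ and $X \times Y$ is smooth projective, $\mT$ is quasi-isomorphic to a bounded complex of vector bundles. Twisting $\mT$ by $\cO_X(n) \boxtimes \cO_Y(m)$ for sufficiently ample line bundles changes the gluing only up to composition with the autoequivalence $(-) \otimes \cO(n)$ on one of the factors $\prfdg X$ or $\prfdg Y$, and so does not alter $\dZ$ up to quasi-equivalence. Using this freedom together with a Beilinson-style resolution of the diagonal, I would arrange for $\mT$ to be represented by a single vector bundle $\V$ on $X \times Y$, or more flexibly by an iterated extension of bundles of a form adapted to the geometric constructions in the next step.

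Next, I would construct $V$ geometrically. The prototype is Orlov's projective bundle formula, which gives $\prf \PP_Y(\E) = \langle \prf Y, \prf Y(1), \ldots, \prf Y(r) \rangle$ with gluing bimodules computed from $\E$, together with the blow-up formula, which inserts copies of $\prf Z$ into $\prf X$ along a smooth center $Z \subset X$. Iterating and combining these two operations, starting from $X \sqcup Y$ (or from $Y$, with $X$ embedded via the projective embedding chosen above), one can build a smooth projective scheme $V$ with a semi-orthogonal decomposition $\prf V = \langle \ldots, \prf X, \prf Y, \ldots \rangle$ whose gluing bimodule between the $\prf X$ and $\prf Y$ blocks is, after the reductions of step one, quasi-isomorphic to $\mT$. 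Smoothness of $V$ is preserved throughout because all centers and base spaces are smooth. Admissibility of the middle piece $\prf X \underset{\mT}{\oright} \prf Y \subset \prf V$ is then automatic from the semi-orthogonal decomposition, yielding a quasi-functor $\mF: \prfdg \dZ \hookrightarrow \prfdg V$ that gives a pure geometric realization.

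The hard part is the matching step: natural geometric constructions (projective bundles, blow-ups) produce only very restricted gluing bimodules, so realizing a completely \emph{arbitrary} perfect complex $\mT$ as the geometric bimodule between two blocks requires genuine flexibility. This is precisely what forces the preliminary normalization of $\mT$ via twists and resolutions, and it is also what motivates using iterated, rather than single-step, constructions. Once such a $V$ exists, both smoothness of $V$ and admissibility of the embedding are formal consequences of the construction and of Proposition \ref{admissible}; the genuine content of the theorem is the existence of a geometric model carrying $\mT$ as its semi-orthogonal gluing data.
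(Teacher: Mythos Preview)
This theorem is not proved in the present paper: it is quoted from \cite{O_glue} (Theorem~4.11 there), and the only information given here about the argument is the remark following the statement, namely that the resulting $\prf V$ has a semi-orthogonal decomposition whose pieces are copies of $\prf\kk$, $\prf X$, $\prf Y$, and $\prf(X\times_{\kk} Y)$. So there is no in-paper proof to compare against line by line; one can only compare your sketch to that structural hint.

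That hint already points to the idea your proposal is missing. The appearance of $\prf(X\times_{\kk} Y)$ among the building blocks shows that the construction in \cite{O_glue} does not try to manufacture an arbitrary $\mT$ as the cross-bimodule between a $\prf X$-block and a $\prf Y$-block built by projective bundles and blow-ups over $X$ and $Y$ alone. Instead it brings the product $X\times_{\kk} Y$ into the construction as a third geometric input, so that $\mT$ is handled as an \emph{object} of $\prf(X\times_{\kk} Y)$ sitting inside a larger, rigidly controlled semi-orthogonal decomposition, rather than as a bimodule to be synthesized from scratch.

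Your outline has the right toolbox (twists, Beilinson resolutions, projective-bundle and blow-up formulas, iteration) and you correctly isolate the crux as the ``matching step.'' But as written there is a genuine gap: you do not actually perform that step. Two specific issues. First, the reduction ``arrange for $\mT$ to be represented by a single vector bundle $\V$'' is not available in general---a perfect complex with cohomology in several degrees is not quasi-isomorphic to a sheaf, and twisting does not change that; you only get a bounded locally free resolution. Second, and more seriously, even granting such a resolution you give no construction of a blow-up or projective bundle over $X\sqcup Y$ whose induced bimodule between the $\prf X$ and $\prf Y$ pieces is the given $\V$ (or its resolution). Projective-bundle and blow-up bimodules are extremely special, so ``iterate until it matches'' is not an argument. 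The device that closes this gap in \cite{O_glue} is precisely the use of $X\times_{\kk} Y$ as an additional geometric component.
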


The proof of this theorem is constructive and it was  mentioned in Remark 4.14 of \cite{O_glue}
that actually  in this case the triangulated category $\prf V$ has a semi-orthogonal decomposition
$
\prf V=\langle \N_1,\dots \N_k\rangle,
$
where all $\N_i$ are equivalent to one of the following four categories: namely,
$\prf\kk,\; \prf X,\; \prf Y,$ and  $\prf (X\times_{\kk} Y).$

This statement can be extended   to the case of geometric derived noncommutative schemes.
Let $X$ be a usual smooth projective scheme and let $\dN\subset\prfdg X$ be a full pretriangulated DG subcategory.
Suppose that the homotopy triangulated  category $\N=\Ho(\dN)$ is admissible in $\prf X.$
By Proposition \ref{smooth_glue}, we obtain that the derived noncommutative scheme $\dN$
is smooth and proper. By definition, it already appears with a pure geometric realization.
Such derived noncommutative scheme will be called geometric.

\begin{theorem}\label{main2}\cite[Th. 4.15]{O_glue}
Let the DG categories $\dN_i,\; i=1,\dots, n$ and the smooth projective schemes $X_i,\; i=1,\dots, n$ be as above.
Let $\dZ=\prfdg\dR$ be a proper derived noncommutative scheme with full embeddings
 of the DG categories $\dN_i\subset \prfdg\dR$ such that $\prf\dR$ has a semi-orthogonal decomposition of the form
$
\langle\N_1, \N_2,\dots, \N_n\rangle,
$
where $\N_i=\Ho(\dN_i).$ Then there exist a smooth projective scheme $V$ and a quasi-functor  $\mF: \prfdg\dZ\hookrightarrow\prfdg V$ which gives a pure geometric realization
for the noncommutative scheme $\dZ.$
\end{theorem}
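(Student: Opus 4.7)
The natural approach is to argue by induction on the length $n$ of the semi-orthogonal decomposition $\prf\dR=\langle\N_1,\ldots,\N_n\rangle$, using Theorem \ref{main} (the gluing of two smooth projective schemes via a perfect complex) as the geometric engine. The base case $n=1$ is immediate: here $\dZ$ is quasi-equivalent to $\dN_1$, which is already admissibly embedded in $\prfdg X_1$, so we take $V=X_1$.

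For the inductive step, split the decomposition as $\langle\M,\N_n\rangle$ with $\M=\langle\N_1,\ldots,\N_{n-1}\rangle$. By Proposition \ref{gluing_semi-orthogonal}, the DG category $\prfdg\dR$ is quasi-equivalent to a gluing $\dM\underset{\mT}{\oright}\dN_n$, where $\dM\subset\prfdg\dR$ is the full DG subcategory on $\M$ and $\mT$ is the associated DG $\dN_n\hy\dM$\!--bimodule from \eqref{bimodule}. Properness of $\dZ$ forces $\mT$ to be cohomologically finite, and $\dM$ inherits all inclusions $\dN_i\subset\prfdg X_i$ for $i<n$ together with a semi-orthogonal decomposition of length $n-1$. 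The inductive hypothesis then supplies a smooth projective scheme $W$ and a pure geometric realization $\prfdg\dM\hookrightarrow\prfdg W$, under which $\dM$ sits as an admissible subcategory of $\prfdg W$.

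The key step is then to transfer the gluing from $\dM\underset{\mT}{\oright}\dN_n$ to a gluing of the two ambient smooth projective schemes $W$ and $X_n$. Using the adjoints of the admissible inclusions $\dM\hookrightarrow\prfdg W$ and $\dN_n\hookrightarrow\prfdg X_n$, extend $\mT$ to a DG $\prfdg X_n\hy\prfdg W$\!--bimodule $\tilde\mT$ that restricts back to $\mT$ on $\dN_n^{\op}\otimes\dM$. Since $W$ and $X_n$ are smooth projective, the DG category $\prfdg(X_n^{\op}\otimes_{\kk}W)\simeq\prfdg(W\times_{\kk}X_n)$ is smooth and proper, so by Theorem \ref{saturated} any cohomologically finite object in it is representable, hence perfect; in particular $\tilde\mT$ corresponds to a genuine perfect complex on $W\times_{\kk}X_n$. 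Admissibility on both sides then guarantees that the inclusion
\[
\prfdg\dZ \simeq \prfdg\bigl(\dM\underset{\mT}{\oright}\dN_n\bigr) \hookrightarrow \prfdg\bigl(W\underset{\tilde\mT}{\oright}X_n\bigr)
\]
is itself admissible. Finally, applying Theorem \ref{main} to $W\underset{\tilde\mT}{\oright}X_n$ yields a smooth projective scheme $V$ together with a pure geometric realization $\prfdg(W\underset{\tilde\mT}{\oright}X_n)\hookrightarrow\prfdg V$; composing the two admissible embeddings produces the required quasi-functor $\mF:\prfdg\dZ\hookrightarrow\prfdg V$.

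The main obstacle is the extension step in the previous paragraph: one must construct $\tilde\mT$ canonically from $\mT$ via the projectors of the admissible subcategories, verify that cohomological finiteness is preserved so that Theorem \ref{saturated} applies, and check that the resulting embedding into the larger gluing is admissible rather than merely fully faithful. These are not severe difficulties in principle since admissibility provides both adjoints, but making them precise requires careful bookkeeping with the lower-triangular construction of Definition \ref{upper_tr} and the standard identification of perfect bimodules between $\prfdg W$ and $\prfdg X_n$ with perfect complexes on $W\times_{\kk}X_n$.
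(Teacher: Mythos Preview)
The theorem is not proved in this paper but cited from \cite{O_glue}; nevertheless, the proof of Theorem~\ref{algebra} explicitly invokes the argument of Theorem~\ref{main2} in \cite{O_glue}, and what is described there is precisely your induction: peel off the last component, embed the remainder (by induction) purely in a smooth projective $W$, extend the gluing bimodule to a perfect complex on $W\times_{\kk}X_n$, and then apply Theorem~\ref{main}. Your proposal is correct and follows the same route as the original, including the identification of the bimodule extension as the only genuinely delicate step.
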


The derived noncommutative scheme $\dZ$ is also smooth as a gluing of smooth DG categories via perfect DG bimodules.
Now Theorem \ref{main2} shows us that the class of all geometric smooth and proper derived noncommutative schemes is closed under gluing operation
with respect to perfect bimodules.

Now we consider derived noncommutative schemes that are related to finite-dimensional DG algebras with sparable semi-simple part and will discuss existing of geometric realizations for such derived noncommutative schemes. The following theorem say us that any such DG $\kk$\!--algebra $\dR$ can be realized as a DG endomorphism algebra of a perfect complex $\mE$ on a smooth projective scheme $V$ with a full separable semi-exceptional collection.

\begin{theorem}\label{algebra}
Let $\dR$ be a finite-dimensional DG $\kk$\!--algebra. Suppose that the semisimple DG $\kk$\!--algebra $\dS_{+}=\dR/\rde$ is  separable.
Then there are a smooth projective scheme
$V$ and a perfect complex $\mE\in \prfdg V$ which satisfy the following properties:
\begin{enumerate}
\item[1)] The DG algebra $\dEnd_{\prfdg V}(\mE)$  is quasi-isomorphic to $\dR.$
\item[2)] The DG category $\prfdg\dR$ is quasi-equivalent to a full DG subcategory $\dC$ of $\prfdg V.$
\item[3)] The category $\prf V$ has a full separable semi-exceptional collection.
\end{enumerate}
Moreover, if $\dR$ is smooth, then the homotopy category
$\H^0(\dC)$ is  admissible in $\prf V.$
\end{theorem}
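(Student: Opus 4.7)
The plan is to combine the Auslander construction of Theorem \ref{DGalgebra} with the geometric gluing result of Theorem \ref{main2}. First, I apply Theorem \ref{DGalgebra} to the DG algebra $\dR$ to obtain a smooth finite-dimensional DG $\kk$-algebra $\dE$, together with a full DG embedding $\prfdg\dR \hookrightarrow \prfdg\dE$, such that $\prf\dE$ admits a full separable semi-exceptional collection $\langle E_1, \dots, E_n\rangle$. The separability of $\dR/\rde$ is precisely what guarantees, via property 5) of Theorem \ref{maintech} and property 5) of Theorem \ref{DGalgebra}, that each $\dEnd_{\prfdg\dE}(E_i)$ is quasi-isomorphic to a separable finite-dimensional division $\kk$-algebra $D_i$, and that $\dE$ itself is $\kk$-smooth.

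Next, I realize each component $\langle E_i\rangle \simeq \prfdg D_i$ geometrically. For a separable finite-dimensional division $\kk$-algebra $D_i$ with center $\ff_i$, the Severi-Brauer variety $SB(D_i)$ is a smooth projective $\kk$-scheme and, by Example \ref{Severi-Brauer}, $\prf SB(D_i)$ admits a full separable semi-exceptional collection one of whose factors is equivalent to $\prf D_i$. In particular, $\prfdg D_i$ arises as a geometric smooth and proper derived noncommutative scheme in the sense preceding Theorem \ref{main2}.

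Now I invoke Proposition \ref{gluing_semi-orthogonal} to view $\prfdg\dE$ as the iterated gluing of its semi-orthogonal factors $\prfdg D_i$ via the bimodules \eqref{bimodule}; these bimodules are perfect because $\dE$ is smooth and proper (Proposition \ref{smooth_glue} applied inductively, together with Theorem \ref{saturated}). Theorem \ref{main2} then produces a smooth projective scheme $V$ and a pure geometric realization $\prfdg\dE \hookrightarrow \prfdg V$. By the construction in Theorem \ref{main} (applied recursively, see Remark 4.14 in \cite{O_glue}) the scheme $V$ itself admits a semi-orthogonal decomposition whose factors are of the form $\prf\kk$, $\prf SB(D_i)$, or $\prf(SB(D_i)\times_\kk SB(D_j))$; since each of these carries a full separable semi-exceptional collection (Example \ref{Severi-Brauer} and its product version), concatenation yields such a collection on $\prf V$. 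Composing $\prfdg\dR \hookrightarrow \prfdg\dE \hookrightarrow \prfdg V$ and taking $\mE$ to be the image of a classical generator of $\prfdg\dR$, Proposition \ref{obj} and standard devissage give $\dEnd_{\prfdg V}(\mE) \simeq \dR$ as DG algebras, proving 1)--3).

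For the final statement, assume $\dR$ is smooth. Then $\prf\dR$ is regular and proper (the first by \cite{Lu}, the second since $\dR$ is finite-dimensional), and sits inside the proper idempotent complete triangulated category $\prf V$; Proposition \ref{admissible} then implies that $\H^0(\dC) \subseteq \prf V$ is admissible. The main technical obstacle I anticipate is the verification that the scheme $V$ built by the iterated application of Theorem \ref{main} genuinely carries a full separable semi-exceptional collection rather than merely a semi-orthogonal decomposition into geometric pieces; this requires inspecting the constructive proof of Theorem \ref{main} to confirm that gluing two schemes carrying such collections via a perfect bimodule produces a scheme whose collection is the concatenation of the inputs together with some further separable semi-exceptional blocks, which is what Remark 4.14 of \cite{O_glue} asserts in substance.
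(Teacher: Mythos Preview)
Your approach is essentially the paper's: pass to $\dE$ via the Auslander construction, realize each simple block geometrically by a Severi--Brauer variety, and then invoke Theorem~\ref{main2} inductively to produce $V$, checking along the way that the full separable semi-exceptional collection persists. Two small imprecisions are worth fixing. First, the objects $E_i$ of Theorem~\ref{DGalgebra} are only \emph{semi}-exceptional, so $\End(E_i)$ is semisimple rather than a division algebra; the paper therefore first refines to a collection $\langle F_1,\ldots,F_N\rangle$ with each $\End(F_j)\cong D_j$ a genuine division algebra before bringing in $SB(D_j)$. Second, after iterating Theorem~\ref{main} the semi-orthogonal factors of $\prf V$ are not just pairwise products $SB(D_i)\times_{\kk} SB(D_j)$: at each step Remark~4.14 of \cite{O_glue} gives factors $\prf\kk$, $\prf V'$, $\prf SB(D_N)$, $\prf(V'\times_{\kk} SB(D_N))$ with $V'$ the scheme built at the previous stage, so arbitrarily long products of Severi--Brauer varieties can appear. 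Your conclusion is unaffected, since any finite product of schemes carrying full separable semi-exceptional collections again carries one, and this is precisely how the paper's induction closes.
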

\begin{proof}
At first, let us note that property 1) directly follows from properety 2).

Consider as above
the DG endomorphism algebra $\dE=\dEnd_{\dR}(\mM)$ of the DG $\dR$\!--module $\mM=\bigoplus_{p=1}^{n} \mM_p,$ where DG modules $\mM_p=\dR/\mJ_p$ with $p=1,\ldots, n$
were constructed in Section \ref{ausl}.
Taking in account Theorem \ref{DGalgebra} we can consider the DG algebra $\dE$ instead of $\dR$ and show that there exists
a smooth projective scheme
$V$ such that the properties 2)-3) hold for $\dE.$

By Theorem \ref{DGalgebra} the triangulated category $\prf\dE$ has a full semi-exceptional collection $(E_1,\dots, E_n)$ and a semi-orthogonal decomposition
of the form
\[
\prf\dE=\langle E_1,\dots, E_n\rangle
\]
where the subcategories $\langle E_i\rangle$ are generated by DG $\dE$\!--modules $\mK_i$ from Theorem \ref{maintech}.
Any subcategory $\langle E_i\rangle$ is an orthogonal sum of triangulated categories of the form $\prf D,$ where $D$ is a division $\kk$\!--algebra.
Thus, we can consider a new full semi-exceptional collection $\langle F_1, \ldots, F_N\rangle,$ which gives a refinement of the previous
semi-orthogonal decomposition, and such that each subcategory $\langle F_i\rangle$ is simple and equivalent to a triangulated category $\prf D_i,$
where $D_i$ is a division $\kk$\!--algebra. All these division $\kk$\!--algebras $D_i$ are also separable by property 5) of Theorem \ref{maintech}.

We will prove the theorem for $\dE$ proceeding by induction on $N$ that is the length of the full semi-exceptional collection $\langle F_1, \ldots, F_N\rangle.$
The base of induction is $N=1,$ and by Remark \ref{Severi-Brauer} we can take the Severy-Brauer variety $SB(D)$ and consider it as a variety
over the field $\kk.$ In this case the DG category $\prfdg D$ is quasi-equivalent to a full DG subcategory $\dC$ of $\prfdg SB(D)$
and the triangulated category $\prf SB(D)$ has a full separable semi-exceptional collection.

Now by Proposition 3.8 from \cite{O_glue} the DG category $\prfdg\dE$ is quasi-equivalent to a gluing of  DG subcategories $\prfdg\dE'$ and $\prfdg D_N,$ where
the first subcategory is generated by the collection $\langle F_1, \ldots, F_{N-1}\rangle$ while the second subcategory
is generated by $F_N.$ By induction hypothesis there is a smooth projective variety $V'$ such that properties 1)-3) hold for
the DG category $\prfdg\dE'.$
By Theorem \ref{main2} there exist a smooth projective scheme $V$ and a quasi-functor  $\mF: \prfdg\dE\hookrightarrow\prfdg V$ which establishes a quasi-equivalence of $\prfdg\dE$ with a full DG subcategory $\dC\subseteq\prfdg V.$
Moreover, as it was shown in the proof of Theorem \ref{main2} in \cite{O_glue} the DG category $\prfdg\dE$ is realized
as a full DG subcategory of a gluing $\prfdg V' \underset{\mT}{\oright}\prfdg SB(D_N)$ via  some perfect DG bimodule $\mT.$ After that we can apply Theorem \ref{main}
and construct a smooth projective scheme $V$ such that the DG category $\prfdg V' \underset{\mT}{\oright}\prfdg SB(D_N)$ is quasi-equivalent
to a full DG subcategory of $\prfdg V.$

Furthermore, by Remark 4.14 from \cite{O_glue} the category $\prf V$ from  Theorem \ref{main} has a semi-orthogonal decomposition of the form
$
\prf V=\langle \N_1,\dots \N_k\rangle,
$
where each $\N_i$ is equivalent to one of the following four categories
$\prf\kk,\; \prf V',\; \prf SB(D_N),$ and  $\prf (V'\times_{\kk} SB(D_N)).$
But all these categories have full separable semi-exceptional collections.
This implies that the category $\prf V$ also has a full separable semi-exceptional collection.
\end{proof}

The following corollary is an analogue of Corollary \ref{DGinclus} and it also gives a characterization of derived noncommutative schemes obtained from  finite-dimensional DG algebras with separable semisimple part but in geometric terms.

\begin{corollary}\label{DGinclus_smooth}
Let $\dA$ be a small $\kk$\!--linear DG category.
The following properties are equivalent:
\begin{enumerate}
\item[1)] The DG category $\dA$ is quasi-equivalent to $\prfdg\dR,$ where $\dR$ is a finite-dimensional  DG algebra for which $\dR/\rde$ is separable.
\item[2)] The DG category $\dA$ is quasi-equivalent to a full DG subcategory $\dC$ of a DG category $\prfdg V,$ where
\begin{enumerate}
\item[a)] $V$ is a smooth projective scheme for which $\prf V$ has a full separable semi-exceptional collection.
\item[b)] the homotopy category $\H^0(\dC)\subseteq\prf V$ is an idempotent complete triangulated subcategory admitting  a classical generator.
\end{enumerate}
\end{enumerate}
Moreover, the DG algebra $\dR$ is smooth if and only if $\H^0(\dC)\subseteq\prf V$ is admissible.
\end{corollary}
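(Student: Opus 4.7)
The strategy is to reduce this geometric characterization to the abstract one given in Corollary~\ref{DGinclus} by using Corollary~\ref{sep_coll} to translate between smooth projective schemes with a full separable semi-exceptional collection and smooth finite-dimensional DG algebras whose semisimple part is separable.

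For the implication $1)\Rightarrow 2),$ I would simply invoke Theorem~\ref{algebra}: given a finite-dimensional DG algebra $\dR$ with separable $\dR/\rde,$ that theorem produces a smooth projective $V,$ a quasi-equivalence of $\prfdg\dR$ with a full DG subcategory $\dC\subset\prfdg V,$ and a full separable semi-exceptional collection in $\prf V.$ The homotopy category $\H^0(\dC)\cong\prf\dR$ is automatically idempotent complete (as a category of perfect modules) and admits the DG module $\dR$ itself as a classical generator.

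For $2)\Rightarrow 1),$ I would apply Corollary~\ref{sep_coll} to the scheme $V$: since $\prf V$ is proper and carries a full separable semi-exceptional collection, the DG category $\prfdg V$ is quasi-equivalent to $\prfdg\dE$ for some smooth finite-dimensional DG $\kk$\!--algebra $\dE,$ and (repeating the argument of Corollary~\ref{sep_coll} inductively) the semisimple part $\dE/\mJ_{+}$ is separable. Under this equivalence, $\dC$ becomes a full DG subcategory of $\prfdg\dE$ whose homotopy category is an idempotent complete subcategory of $\prf\dE$ with a classical generator. Hence the hypotheses of condition~$2)$ of Corollary~\ref{DGinclus} are satisfied (taking $\dB=\prfdg\dE$), so Corollary~\ref{DGinclus} yields that $\dA$ is quasi-equivalent to $\prfdg\dR$ for some finite-dimensional DG algebra $\dR$ with $\dR/\rde$ separable.

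For the final equivalence concerning smoothness, I would argue exactly as in the last paragraph of the proof of Corollary~\ref{DGinclus}. If $\dR$ is smooth, then $\prf\dR$ is proper and regular by Lunts' theorem, so by Proposition~\ref{admissible} the subcategory $\H^0(\dC)\subseteq\prf V$ is admissible. Conversely, if $\H^0(\dC)$ is admissible in $\prf V,$ then $\prf V$ has a semi-orthogonal decomposition into $\H^0(\dC)$ and its orthogonal; by Proposition~\ref{gluing_semi-orthogonal} the DG category $\prfdg V$ is quasi-equivalent to a gluing of $\dC$ with its orthogonal via a perfect bimodule, and Proposition~\ref{smooth_glue} then forces $\dC\simeq\prfdg\dR$ to be smooth. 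The only real subtlety, inherited from Corollary~\ref{DGinclus}, is checking separability of $\dR/\rde$ from separability of $\dE/\mJ_{+}$; this follows by the argument used there, namely that a classical generator of $\dC$ is a summand of a finitely generated semi-free $\dE$\!--module, so its semisimple endomorphism algebra is covered by that of a free $\dE$\!--module and hence remains separable.
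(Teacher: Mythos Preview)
Your proof is correct and follows essentially the same approach as the paper: for $1)\Rightarrow 2)$ you invoke Theorem~\ref{algebra}, and for $2)\Rightarrow 1)$ (and the smoothness statement) you reduce to Corollary~\ref{DGinclus}. The only difference is that for $2)\Rightarrow 1)$ you insert an unnecessary detour through Corollary~\ref{sep_coll} to replace $\prfdg V$ by $\prfdg\dE$; the paper simply observes that $\dB=\prfdg V$ already satisfies condition~2a) of Corollary~\ref{DGinclus} verbatim, so the implication is literally a special case.
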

\begin{proof}
$1)\Rightarrow 2)$ It directly follows from Theorems \ref{algebra}.
Let $\dA\cong\prfdg\dR,$ where $\dR$ is a finite-dimensional  DG algebra with separable semisimple part $\dR/\rde.$
By Theorem \ref{algebra} there is a smooth projective scheme $V$ and a perfect complex
$\mE$ such that $\dEnd_{\prfdg V}(\mE)\cong\dR.$ Thus, the DG category $\prfdg\dR$ is quasi-equivalent to a full DG subcategory $\dC$ of $\prfdg V.$ Hence,  the homotopy category $\H^0(\dC)\subseteq\prf V$ is  an
idempotent complete triangulated subcategory admitting  a classical generator.
Moreover, the category $\prf V$ has a full separable semi-exceptional collection by property 3) of Theorem \ref{algebra}.

$2)\Rightarrow 1)$  is a particular case of Corollary \ref{DGinclus}.
In the end of Corollary \ref{DGinclus} it is also shown that the smoothness of $\dR$ is equivalent to the property for the subcategory $\H^0(\dC)$ to be admissible.
\end{proof}

\begin{remark}
{\rm
If the base field $\kk$ is perfect, then  any semisimple algebra is separable.
Thus, the results of this section can be applied  to all finite-dimensional DG algebras over a perfect field $\kk.$
}
\end{remark}

\subsection{Conjectures and questions}
It is very interesting to understand and to describe smooth projective varieties, for which the triangulated category of perfect complexes is equivalent to
$\prf\dR,$ where $\dR$ is a finite-dimensional DG algebra. Even for algebraically closed field $\kk$ of characteristic zero we are quite far from a solving of this problem.
It is rather natural to hypothesize that in this case such a variety is rational and, moreover, it has a stratification by a rational strata.

\begin{conjecture}
Let $Z$ be a projective scheme such that the DG category $\prfdg Z$ is quasi-equivalent  to $\prfdg\dR,$ where $\dR$ is
a finitely-dimension DG algebra. Then $Z$ is geometrically rational. Moreover,
the scheme $\widebar{Z}=Z\otimes_{\kk}\widebar{\kk}$ has a stratification $\widebar{Z}=\bigcup_{i=1}^{k} Y_i$ such that any $Y_i$ is an open subset of $\AA^{n_i}.$
\end{conjecture}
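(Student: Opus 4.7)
The plan is to reduce to the case $\kk=\overline{\kk}$ and then apply the geometric realization of Theorem \ref{algebra}. The hypothesis $\prfdg Z\simeq\prfdg\dR$ with $\dR$ finite-dimensional, and the conclusion about the stratification of $\overline{Z}$, are both stable under base change from $\kk$ to $\overline{\kk}$, so we assume $\kk$ is algebraically closed. Then $\dR/\rde$ is a product of matrix algebras over $\kk$, hence separable, and Theorem \ref{algebra} produces a smooth projective scheme $V$ together with a quasi-functor exhibiting $\prfdg Z\simeq\prfdg\dR$ as a full DG subcategory of $\prfdg V$; moreover $\prf V$ carries a full (now genuinely) exceptional collection.

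Next I would use the explicit inductive recipe for $V$ given in the proofs of Theorems \ref{main} and \ref{main2}, in which $V$ is built from Severi--Brauer varieties by iterated projective bundles and blow-ups along smooth centers. Over $\overline{\kk}$ every Severi--Brauer variety is a projective space, and each intermediate step inherits a torus action; the Bialynicki--Birula decomposition at each stage, tracked through the projective bundle and blow-up formulas, produces a stratification of $V$ by locally closed subsets each of which is an open subset of an affine space. Thus the conjecture already holds for the ambient variety $V$, which is itself a geometric realization of $\prfdg\dR$.

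The last and decisive step is to transfer the cell decomposition from $V$ down to $Z$. When $Z$ is smooth (equivalently $\dR$ is smooth), Corollary \ref{DGinclus_smooth} guarantees that $\prf Z\subset\prf V$ is admissible; consequently one expects the Chow motive of $Z$ to be a direct summand of the Tate motive of $V$, and one would try to upgrade this motivic decomposition into a genuine affine stratification of $Z$ using methods in the spirit of Rost--Vishik--Karpenko on decompositions of the diagonal. In the singular case the embedding need no longer be admissible, and one must first set up an appropriate replacement; for instance, the Auslander-type enlargement $\prf\dE$ from Theorem \ref{DGalgebra} provides a smooth categorical resolution on which motivic reasoning can still be carried out, and one would try to descend the resulting stratification back to $Z$.

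I expect this last step to be the principal obstacle. Already in the smooth case, it is an open problem --- subsuming the Bondal--Kuznetsov-type rationality questions --- whether a smooth projective variety whose derived category of perfect complexes admits a full exceptional collection must carry a Bialynicki--Birula-type cell decomposition. The present conjecture is at least as strong, so a complete proof appears to require either substantive progress on that open question or a fundamentally new geometric construction extracting the scheme $Z$ directly from the finite-dimensional DG model $\dR$ in a way that makes the stratification manifest.
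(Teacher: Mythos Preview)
The statement you are attempting to prove is explicitly labeled as a \emph{conjecture} in the paper; the paper offers no proof whatsoever, only some heuristic remarks following the statement. There is therefore no ``paper's own proof'' to compare your proposal against.

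Your proposal is not a proof either, and you correctly recognise this in your final two paragraphs. The first two steps --- base-changing to $\overline{\kk}$ and invoking Theorem~\ref{algebra} to produce a smooth projective $V$ with a full exceptional collection containing $\prfdg Z$ as a full DG subcategory --- are fine and essentially tautological given the machinery of the paper. But the ``last and decisive step,'' transferring a cell decomposition from the ambient $V$ down to $Z$, is precisely the entire content of the conjecture. The embedding $\prf Z\hookrightarrow\prf V$ is purely categorical; there is no geometric map between $Z$ and $V$, and no known mechanism for extracting an affine stratification of $Z$ from a motivic direct-summand relation. Your appeal to ``methods in the spirit of Rost--Vishik--Karpenko'' is a wish, not an argument: those techniques produce motivic decompositions, not honest stratifications by open subsets of affine spaces, and the gap between the two is exactly the open problem you cite (whether a variety with a full exceptional collection admits a cell decomposition). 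In other words, your outline reduces the conjecture to a problem at least as hard as the conjecture itself.

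So the honest assessment is: the paper poses this as an open question, your proposal does not close it, and you have correctly identified why.
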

It is possible that the last statement gives a complete description of such varieties. Moreover, we can ask about such stratification
that all $Y_i\subseteq \AA^{n_i}$ also have complements with stratifications described above. It is also possible that some reasonable stratifications exist over a base field if we ask that $Y_i$ are open subsets in $\AA^{n_i}_{\ff_i},$ where $\ff_i$ are finite extensions of $\kk.$

It is also interesting and very useful to know about existing (or not existing) of phantoms of the form $\prfdg\dR,$ where
$\dR$ is finite-dimensional.
We recall that a smooth and proper DG category
$\prfdg\dR$ is called  a {\sf quasi-phantom} if the Hochschild homology are trivial, i.e.
$\mathrm{HH}_{*}(\prfdg\dR)=0,$ and $K_0(\prfdg\dR)$ is a finite abelian group.
It is  called a {\sf phantom} if, in addition, $K_0(\prfdg\dR)=0$ (see \cite{GO}, \cite[Def. 3.18]{O_alg}).
It was proved in \cite{GO} that geometric phantoms exist. However, there are some arguments to think that all (quasi)-phantoms are complicated as DG categories and they cannot be of such form
as $\prfdg\dR$ with a finite-dimensional DG algebra $\dR.$

\begin{conjecture}\label{phantom}
There are no phantoms of the form $\prfdg\dR,$ where $\dR$ is a smooth finite-dimensional DG algebra.
\end{conjecture}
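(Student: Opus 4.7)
The plan is to argue by contradiction, combining the Auslander-type embedding from Theorem~\ref{DGalgebra} with the additivity of Hochschild homology and Grothendieck groups under semi-orthogonal decompositions.

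Suppose $\dR$ is a smooth finite-dimensional DG algebra for which $\prfdg\dR$ is a phantom. A preliminary step is to check that smoothness of $\dR$ forces the semisimple quotient $\dR/\rde$ to be $\kk$-separable; the quickest route is to embed each simple $\dR$\!-module into $\prf\dR$, to note that the corresponding category $\prf D_i$ is regular, proper and idempotent complete (hence admissible in $\prf\dR$ by Proposition~\ref{admissible}), and to observe that a smooth ambient forces each division algebra $D_i$ to be smooth over $\kk$, that is, separable. Granted this, Theorem~\ref{DGalgebra} produces a smooth DG algebra $\dE$ with a full separable semi-exceptional collection $\langle E_1,\ldots,E_n\rangle$ in $\prf\dE$, and realises $\prf\dR$ as an admissible subcategory of $\prf\dE$ via the bimodule $\mP_n$, yielding a semi-orthogonal decomposition $\prf\dE = \langle \prf\dR,\,\prf\dR^{\perp}\rangle$.

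Next, I would extract numerical information from this decomposition. Additivity gives $\mathrm{HH}_*(\dE) \cong \mathrm{HH}_*(\dR)\oplus \mathrm{HH}_*(\prf\dR^{\perp})$ and $K_0(\prf\dE) \cong K_0(\prf\dR)\oplus K_0(\prf\dR^{\perp})$, while the semi-exceptional collection computes the ambient side: $\mathrm{HH}_*(\dE)$ is concentrated in degree zero and equals $\bigoplus_i Z(D_i)$, and $K_0(\prf\dE) = \bigoplus_i \ZZ\cdot[E_i]$. The phantom hypothesis then forces $[\dR]$ to vanish in $K_0(\prf\dE)$; unrolling the triangles $\mK_i \to \mP_i \to \mP_{i-1}$ from the Auslander construction, the image $[\mP_n]$ of $[\dR]$ equals $\sum_i [\mK_i]$ with $[\mK_i] = c_i[E_i] \in \ZZ\cdot[E_i]$ and $\mK_i \simeq \dHom_\dR(\mM, \mJ_{i-1}/\mJ_i)$. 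So the phantom hypothesis reduces to demanding $c_i = 0$ for every $i$, together with the analogous vanishing of the Chern character in $\mathrm{HH}_0$.

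The main obstacle is to convert this system of cancellation identities into an actual contradiction. The integer $c_i$ is a graded Euler characteristic of $\dHom_\dR(\mM, \mJ_{i-1}/\mJ_i)$, and it can a priori vanish through cohomological cancellation even when the graded piece itself is non-zero, which is precisely the pathology that phantoms are designed to realise. To close the argument one would need to refine the $K_0$\!--based count by its lift to negative cyclic or topological $K$-theory via the Dennis trace, exploiting Kaledin's degeneration of the non-commutative Hodge-to-de Rham spectral sequence for smooth and proper DG algebras, since these finer invariants should continue to detect the non-acyclic finite-dimensional DG structure of $\dR$ even when $K_0$ and $\mathrm{HH}_*$ vanish. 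Identifying such a genuinely stronger additive invariant is the essential open point that prevents the conjecture from being proved outright.
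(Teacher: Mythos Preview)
The statement is presented in the paper as an open \emph{conjecture}; there is no proof in the paper to compare against, and your own proposal correctly acknowledges in its final sentence that it does not close.

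Two points on the substance. First, the preliminary step is flawed as written: the thick subcategory of $\prf\dR$ generated by a simple module $S_i$ is equivalent to $\prf\dEnd_{\dR}(S_i)$, not to $\prf D_i$, because $\Ext^{>0}_{\dR}(S_i,S_i)$ need not vanish; hence Proposition~\ref{admissible} and Proposition~\ref{smooth_glue} do not directly yield that $D_i$ itself is smooth. (The conclusion that smoothness of $\dR$ forces separability of the semisimple part is nonetheless true, e.g.\ by base change to the algebraic closure.) Second, and more seriously, the completion you propose cannot work: negative cyclic, cyclic, and periodic cyclic homology are all computed from Hochschild homology via the mixed-complex formalism, so $\mathrm{HH}_*=0$ already forces all of them to vanish, and Kaledin's degeneration is precisely the identification $\mathrm{HP}_*\cong\mathrm{HH}_*$ in the smooth proper case, hence contributes nothing new here. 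The difficulty is structural: a phantom is \emph{defined} by the vanishing of the standard additive invariants, and no additive invariant is presently known that separates a nonzero phantom from zero. The ``essential open point'' you name is therefore not a missing lemma but the conjecture itself.
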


We should note that at this moment we know many examples of smooth projective varieties for which
the triangulated category of perfect complexes have  full (semi-)exceptional collections. In this cases the DG category
$\prfdg X$ is quasi-equivalent to $\prfdg\dR$ with finite-dimensional $\dR.$ On the other hand, we do not know any smooth projective variety, for which
there is such an equivalence, but the category of perfect complexes does not have a full (semi-)exceptional collection.

\begin{question}
{\rm Is there a smooth projective variety $X$ such that $\prfdg X$ is quasi-equivalent to $\prfdg \dR,$ where
$\dR$ is a smooth finite-dimensional DG algebra, but $X$ does not have a full semi-exceptional collection.
}
\end{question}

Suppose that such a smooth projective variety $X$ exists. In this case we can consider different blow ups $\widetilde{X}\to X$ of this variety along smooth subvarieties also possessing full exceptional collections.
The results of \cite{Blow} allow us to assert that for any such $\widetilde{X}$ the DG categories $\prfdg \widetilde{X}$ is quasi-equivalent to $\prfdg \widetilde{\dR}$ with a
smooth finite-dimensional $\widetilde{\dR}$ as well. It is possible that some of such blow ups $\widetilde{X}$ already have full exceptional collections in spite of the variety $X$ doesn't have.

\begin{question}
{\rm  Is there a smooth projective variety $X$ that does not have a full semi-exceptional collection but some its blow up
$\widetilde{X}\to X$ already has such a collection.
}
\end{question}

\subsection{Examples of geometric realizations for some DG algebras}
In the end of the paper we consider some examples of Auslander construction for DG algebras and geometric realizations that appear here.

Let $V$ be  a vector space of dimension $n.$ Consider the exterior algebra $\Lambda^* V=\bigoplus_{q} \Lambda^q V$
as a graded algebra, where elements of $V$ have degree $1.$ Denote by $\dR$ the DG algebra $\Lambda^* V$ with the differential equal to $0.$
Let us apply the Auslander construction to the DG algebra $\dR,$ i.e. let us consider the DG algebra
$\dE$ that is equal to $\dEnd_{\dR}(\mM),$ where $\mM=\bigoplus_{p} \dR/\mJ_p$ as in the beginning of Section \ref{ausl}.
As it was proved in Theorem \ref{DGalgebra} the category $\prf\dE$ has a full semi-exceptional collection that is formed by
the objects $\mK_i, 1\le i\le n+1$ from Theorem \ref{maintech}.
The objects $\mK_i$ are h-projective and we denote by $\dK$ the DG algebra $\dEnd_{\dE}(\mK),$ where $\mK$ is the sum
$\bigoplus_{i=1}^{n+1}\mK_i.$
By property 3) of Theorem \ref{DGalgebra} the DG categories $\prfdg\dE$ and $\prfdg\dK$ are quasi-equivalent.
In this case it is more convenient to consider another derived Morita equivalent DG algebra $\dK'$ that is by definition
$\dEnd_{\dE}( \bigoplus_{i=1}^{n+1}\mK_i[i-1]).$

Let us calculated the DG algebra $\dK'.$
It is easy to see that all objects $\mK_i$ are exceptional, i.e. $\dEnd_{\dE}(\mK_i)$ are quasi-isomorphic to $\kk$ for all $i.$
Moreover, we can easily checked that for any pair $i<j$ the complex $\dHom_{\dE}(\mK_i, \mK_j)$ has only one nontrivial cohomology in 0-term and it is isomorphic to $\Lambda^{j-i}V\oplus \Lambda^{j-i-1}V.$
This implies that the DG algebra $\dK'$ is quasi-isomorphic to a usual algebra.
Moreover, we can consider a vector space $U$ that is equal to $V\oplus\kk.$ There are canonical isomorphisms $\Lambda^{j-i} U\cong \Lambda^{j-i}V\oplus \Lambda^{j-i-1}V.$
Let us now consider the projective space $\PP(U)$ and the following full strong exceptional collection on it
\[
\langle \cO, \T(-1), \Lambda^2\T(-2), \dots, \Lambda^n\T(-n)=\cO(1)\rangle,
\]
where $\T$ is the tangent bundle on $\PP(U).$ It is not difficult to check that the endomorphism algebra of this exceptional collection
$A=\End(\bigoplus_{i=0}^{n}\Lambda^i\T(-i))$
is quasi-isomorphic to the DG algebra $\dK'.$ Thus, we obtain the following quasi-equivalences between DG categories
\[
\prfdg\dE\cong\prfdg\dK\cong\prfdg\dK'\cong\prfdg A\cong\prfdg \PP(U).
\]

Finally, we can note that the decomposition $U=V\oplus\kk$ gives a fixed point $p\in\PP(U).$ It can be checked that the
natural functor $\prf\dR\to\prf\dE\cong\prf \PP(U)$ from Theorem \ref{DGalgebra} sends the DG algebra $\dR$ as the right module to the structure sheaf $\cO_p\in\prf\PP(U) $ of the point $p\in\PP(U).$

\begin{proposition} The Auslander construction from Theorem \ref{DGalgebra} applied to the DG algebra $\dR,$ which is the graded exterior algebra
$\Lambda^* V,$ with $\deg V=1, \dim V=n$ and with the trivial differential, provides a full embedding $\prf\dR$ to $\prf\PP^n$ sending
the algebra $\dR\in \prf\dR$ to the structure sheaf $\cO_{p}\in\prf \PP^n$ of a point $p\in\PP^n.$
\end{proposition}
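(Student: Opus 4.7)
The plan is to make precise the computations sketched in the paper. First observe that with trivial differential the graded ideal $J=\bigoplus_{q\geq 1}\Lambda^q V$ is automatically a DG ideal, so $\mJ_p=J^p=\bigoplus_{q\geq p}\Lambda^q V$, and the index of nilpotency of $\Lambda^{*}V$ is $n+1$. The Auslander construction hence produces $n+1$ DG modules $\mM_p=\dR/J^p$ with zero differential, the DG algebra $\dE=\dEnd_\dR(\bigoplus_{p=1}^{n+1}\mM_p)$ (also with zero differential), and the cones $\mK_i=\Cone(\phi_{i,i-1})[-1]$ of Theorem \ref{maintech}. Using formula (\ref{homP}) for $i\geq j$ and the short exact sequence $0\to J^{i}/J^j\to\mM_j\to\mM_i\to 0$ for $i<j$, together with a direct inspection of the cone differentials applied to the shifted objects $\mK_i[i-1]$, one checks that $\dEnd_\dE(\mK_i[i-1])$ is quasi-isomorphic to $\kk$, while $\dHom_\dE(\mK_i[i-1],\mK_j[j-1])$ has cohomology concentrated in degree zero and equal to $\Lambda^{j-i}V\oplus\Lambda^{j-i-1}V$ for $i<j$. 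In particular the DG algebra $\dK'=\dEnd_\dE(\bigoplus_{i=1}^{n+1}\mK_i[i-1])$ is formal and quasi-isomorphic to an ordinary finite-dimensional $\kk$-algebra $A$.

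Next I would identify $A$ with the endomorphism algebra of the collection $\cO,\T(-1),\Lambda^2\T(-2),\ldots,\Lambda^n\T(-n)=\cO(1)$ on $\PP(U)$, where $U=V\oplus\kk$. The canonical decomposition $\Lambda^{m}U\cong\Lambda^{m}V\oplus\Lambda^{m-1}V$ matches the Hom-spaces computed above with $\Hom(\Lambda^i\T(-i),\Lambda^j\T(-j))\cong\Lambda^{j-i}U$, the latter being a Borel--Bott--Weil computation on $\PP(U)$. The compatibility of the composition laws is the main technical obstacle: one has to check that the wedge multiplication on $\Lambda^{*}V$ coming from the ambient DG algebra structure corresponds, under the splitting $\Lambda^{m}U\cong\Lambda^{m}V\oplus\Lambda^{m-1}V$ induced by $U=V\oplus\kk$, to composition of homomorphisms between twisted exterior powers of the tangent bundle. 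I expect this to follow from the Euler exact sequence $0\to\cO(-1)\to U\otimes\cO\to\T(-1)\to 0$ and its wedge powers, which provide the canonical model for the relevant Hom-spaces and their compositions. Once the isomorphism $A\cong\End(\bigoplus_{i=0}^{n}\Lambda^i\T(-i))$ is established, the chain of quasi-equivalences $\prfdg\dE\simeq\prfdg\dK\simeq\prfdg\dK'\simeq\prfdg A\simeq\prfdg\PP^n$ follows, and Theorem \ref{DGalgebra} produces the fully faithful functor $\prf\dR\hookrightarrow\prf\PP^n$.

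Finally I would locate the image of $\dR$. Under the quasi-functor of Theorem \ref{DGalgebra} the rank-one free module $\dR\in\prf\dR$ maps to $\mP_{n+1}=\dHom_\dR(\mM,\mM_{n+1})\in\prf\dE$, which by construction is built from the $\mK_i$ through iterated extensions reflecting the natural filtration of $\dR$ by the ideals $J^p$. Under the identification with $\prf\PP^n$, this iterated extension corresponds term-by-term to the Koszul-type resolution of the structure sheaf $\cO_p$ of the point $p\in\PP(U)$ singled out by the summand $\kk\subset U=V\oplus\kk$: its successive subquotients are precisely the objects $\Lambda^{i-1}\T(-(i-1))$ matching the graded pieces of the filtration on $\mP_{n+1}$. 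Comparing the two descriptions shows that $\dR\in\prf\dR$ is carried to $\cO_p\in\prf\PP^n$, completing the proof.
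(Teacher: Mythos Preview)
Your proposal is correct and follows essentially the same route as the paper's own discussion preceding the proposition: compute the cohomology of $\dHom_\dE(\mK_i,\mK_j)$, pass to the shifted generator $\bigoplus_i\mK_i[i-1]$ to obtain a DG algebra $\dK'$ with cohomology in degree $0$, identify it with $\End\bigl(\bigoplus_{i=0}^{n}\Lambda^i\T(-i)\bigr)$ on $\PP(U)$ via $\Lambda^mU\cong\Lambda^mV\oplus\Lambda^{m-1}V$, and then match $\mP_{n+1}$ with $\cO_p$. Your account is in fact slightly more precise than the paper's sketch (you correctly state the concentration of cohomology for the shifted objects and point to the Euler sequence and the Koszul resolution as the mechanisms behind the two identifications), and the one step you flag as the ``main technical obstacle''---compatibility of compositions---is exactly the step the paper leaves as ``not difficult to check.''
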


Let us consider another graded algebra that will be considered as the DG algebra with zero differential. Denote by $\dR$ the DG algebra $(\kk[x]/x^n, 0),$ where $\deg x=1.$
Let us apply the Auslander construction to it and consider the DG algebra
$\dE$ that is equal to $\dEnd_{\dR}(\mM),$ where $\mM=\bigoplus_{p=1}^n \kk[x]/x^p.$
The category $\prf\dE$ has a full semi-exceptional collection that is actually exceptional. This collection is formed by
the objects $\mK_i, 1\le i\le n$ from Theorem \ref{maintech}.
Denote by $\dK'$ the DG algebra $\dEnd_{\dE}(\mK'),$ where $\mK'$ is the sum
$\bigoplus_{i=1}^{n}\mK_i[i-1].$
As above the DG category $\prfdg\dE$ is quasi-equivalent to $\prfdg\dK'.$

It can be shown that the DG algebra $\dK'$ is quasi-isomorphic to a usual algebra and this algebra can be described as the path algebra $\kk[Q]$ of the following quiver with relations
\begin{equation*}\label{Ising}
Q=\Bigl(
\xymatrix{
\underset{1}{\bullet} \ar@<1ex>[r]^{a_1}\ar@<-1ex>[r]_{b_1} &
\underset{2}{\bullet} \ar@<1ex>[r]^{a_2} \ar@<-1ex>[r]_{b_2} &
\underset{3}{\bullet} \ar@{.}[r] &
\underset{n-1}{\bullet} \ar@<1ex>[r]^{a_{n-1}}\ar@<-1ex>[r]_{b_{n-1}} &
\underset{n}{\bullet}
}
\Big|\quad  a_{i+1} b_i=0,\; b_{i+1} a_i=0
\Bigr).
\end{equation*}
Thus, we obtain that the DG category
$
\prfdg\dE
$
is quasi-equivalent to $\prfdg \kk[Q].$
The quiver $Q$ appears as a  directed Fukaya category of a double covering
$C\to\AA^1$ branched along $n$ points, so
that the total space is a hyperelliptic curve of genus $g=\left[\frac{n-1}{2}\right]$ without one or two points (see \cite{Se}).
We are not going to discuss a geometric realization for this quiver here, but the case of $n=3$ is considered in the paper \cite{O_q}.

\end{document}